\theoremstyle{plain}
\newtheorem{theorem}{Theorem}[section]
\newtheorem{proposition}[theorem]{Proposition}
\newtheorem{lemma}[theorem]{Lemma}
\newtheorem{corollary}[theorem]{Corollary}
\theoremstyle{definition}
\newtheorem{definition}[theorem]{Definition}
\theoremstyle{remark}
\newtheorem{remark}[theorem]{Remark}
\numberwithin{equation}{section} %% Equation numbering control.
\numberwithin{figure}{section}   %% Figure numbering control.
\renewcommand{\section}{\@startsection{section}{1}{0pt}%
  {1.5ex plus .2ex minus .2ex}%
  {1.0ex plus .2ex}%
  {\normalfont\large\bfseries\raggedright}}
\renewcommand{\subsection}{\@startsection{subsection}{2}{0pt}%
  {1.25ex plus .2ex minus .2ex}%
  {1.0ex plus .2ex}%
  {\normalfont\normalsize\bfseries\raggedright}}
\renewenvironment{abstract}{%
  \begin{center}
    \textbf{\abstractname}
  \end{center}
}{}
\begin{document}
\makeatletter
% Prevent title from being forced to uppercase
\def\@settitle{\begin{center}%
  \normalfont\LARGE\bfseries \@title
  \end{center}%
}
\makeatother

\title[Physics-Informed Neural Network in Kuramoto-Sivashinsky equation]{Regularity and error estimates in physics-informed neural networks  for the Kuramoto-Sivashinsky equation}
% ======================  Author Information ======================
\author[M. M. Rahman]{Mohammad Mahabubur Rahman}
\address[Mohammad Mahabubur Rahman]{School of Mathematical and Statistical Sciences, 
                Clemson University, 
       Clemson, SC, 29634, USA}
\email[Mohammad Mahabubur Rahman]{rahman6@clemson.edu}
\author[D. Verma]{Deepanshu Verma
}
\address[Deepanshu Verma
]{School of Mathematical and Statistical Sciences, 
                Clemson University, 
       Clemson, SC, 29634, USA}
\email[Deepanshu Verma
]{dverma@clemson.edu}
\maketitle{}
\begin{abstract}
Due to its nonlinearity, bi-harmonic dissipation, and backward heat-like term in the absence of a divergence-free condition, the $2$-D/$3$-D Kuramoto-Sivashinsky equation poses significant challenges for both mathematical analysis and numerical approximation. These difficulties motivate the development of methods that blend classical analysis with numerical approximation approaches embodied in the framework of the physics-informed neural networks (PINNs). In addition, despite the extensive use of PINN frameworks for various linear and nonlinear PDEs, no study had previously established rigorous error estimates for the Kuramoto-Sivashinsky equation within a PINN setting. In this work, we overcome the inherent challenges, and establish several global regularity criteria based on space-time integrability conditions in Besov spaces. We then derive the first rigorous error estimates for the PINNs approximation of the Kuramoto-Sivashinsky equation and validate our theoretical error bounds through numerical simulations.
\end{abstract}
\medskip

\noindent Keywords: Scientific machine learning, PINNs, Error estimate, Regularity, Kuramoto-Sivashinsky equation
\section{Introduction}
 \subsection{Kuramoto-Sivashinsky Equation}
The Kuramoto-Sivashinsky equation  features a rich interplay between nonlinear advection, bi-harmonic higher-order  dissipation, and a  backward heat-type term, offering chaotic spatiotemporal dynamics. For instance, the Kuramoto-Sivashinsky equation has been widely used to study flame front instabilities and ion plasma instabilities. We recall the Kuramoto-Sivashinsky equation, which was originally proposed in the 1970s by Kuramoto and Tsuzuki in the studies of crystal growth \cite{34, 35}, as well as by Sivashinsky in the study of flame-front instabilities \cite{55} (see also \cite{42,56}).

Let us now introduce the $d$-dimensional Kuramoto-Sivashinsky equation in vector form, given by
\begin{equation}
\label{KSE}
\left\{
\begin{aligned}
&\partial_t u + (u \cdot \nabla) u + \lambda \Delta u + \Delta^2 u = 0 && \text{in } \mathbb{T}^d \times [0,T],\\
&u(0) = u_0 && \text{in } \mathbb{T}^d.
\end{aligned}
\right.
\end{equation}
Here, $u: \mathbb{T}^d \times[0,T] \to \mathbb{R}^d$, with $d \in\{2,3\}$, is the flame front velocity, $\lambda > 0$ is a constant, and $u_0:\mathbb{T}^d \to \mathbb{R}^d$ is the initial flame front velocity. We consider the Kuramoto-Sivashinsky  equation \eqref{KSE} on the $d$-dimensional torus $\mathbb{T}^d=\mathbb{R}^d / (2\pi \mathbb{Z}^d)$ with periodic boundary conditions. 

It is worth noting that the non-vanishing nonlinear term $(u \cdot \nabla)u$ in the $L^2$-energy estimate, due to the lack of a divergence-free condition and the absence of scaling-invariant solutions, makes the problem particularly challenging and intriguing in higher dimensions. This raises a fundamental question: Does the two-or three-dimensional Kuramoto-Sivashinsky equation admit a global strong solution for arbitrary initial data? This problem remains open. 
Motivated by this, it is natural to ask whether one can establish sufficient conditions in Besov spaces that address this question, at least partially, by ensuring the existence of a global strong solution, which we investigate in this manuscript.  

Having introduced the equation, we now review several results concerning global well-posedness (see, e.g, \cite{46, 59}) that have been obtained for the $1$-D case over the past decades; the equation has been shown to exhibit rich and interesting dynamics.  In contrast to the $1$-D case, the analysis of global well-posedness  for the $2$-D Kuramoto-Sivashinsky equation poses substantial mathematical challenges, one of the reasons being that the nonlinear term $(u \cdot \nabla u, u)$ does not vanish in the $L^2$-energy identity due to the lack of a divergence-free condition. Global well-posedness for sufficiently small initial data was first established in \cite{54} on the domain $[0, 2\pi] \times [0, 2\pi \epsilon]$ with $\epsilon > 0$ sufficiently small. Later works continued to develop such results-i.e., global existence for small initial data-and improved the sharpness of the smallness condition (see, e.g., \cite{1, 4, 33, 41} and references therein).  Further studies have investigated modified equations \cite{13, 17, 39,38, 50}. Faced with the challenge of exploring global well-posedness, the possibility of finite-time blow-up cannot be ruled out in the absence of additional regularity conditions. To address this issue, one may resort to Prodi-Serrin-type regularity criteria, which provide sufficient conditions for preventing blow-up.  We refer readers to the literature \cite{37, CH, JU, FA} in this direction.

A function space is called critical for the  Kuramoto-Sivashinsky equation \eqref{KSE} if its norm is invariant under the scaling.  
In contrast to the Navier-Stokes equations, the  Kuramoto-Sivashinsky equation \eqref{KSE} generally lacks a  scaling-invariant solution. However, in the special case $\lambda = 0$, where the backward heat term $\lambda \Delta$ vanishes, the  Kuramoto-Sivashinsky equation \eqref{KSE} becomes invariant under the  following scaling
$$
u_\beta(t, x) := \beta^3 u(\beta^4 t, \beta x)\; \forall \beta>0,
$$
under which the Besov space $B^{-3}_{\infty,\infty}$ is critical. Establishing regularity criteria in this space typically requires the divergence-free condition to control the nonlinear terms and close the necessary estimates. 
 However, the absence of the divergence-free condition in the  Kuramoto-Sivashinsky equation makes it significantly difficult to derive regularity criteria in terms of this space. To illustrate these difficulties, consider taking the curl of the  Kuramoto-Sivashinsky equation \eqref{KSE}, which yields 
\begin{align}
\label{est 1.3k}
\partial_t \omega + \lambda \nabla \omega + \Delta^2 \omega = (\nabla \cdot u)\omega - (u \cdot \nabla)\omega + (\nabla \cdot \omega) u + (\omega \cdot \nabla) u,
\end{align}
where $\omega = \nabla \times u$. In particular, the terms $(\nabla \cdot u)\omega$ and $(\nabla \cdot \omega)u$ vanish under the incompressibility condition $\nabla \cdot u = 0$. The last term in \eqref{est 1.3k}, known as the vortex stretching term, $(\omega \cdot \nabla)u$ vanishes in the two-dimensional case, whereas it persists in three dimensions within the framework of the Navier-Stokes equations. In contrast, none of the terms on the right-hand side of \eqref{est 1.3k} vanish for the Kuramoto-Sivashinsky equation. The absence of a divergence-free condition in the Kuramoto-Sivashinsky equation leads to the
failure of standard structural cancellations, thereby complicating energy estimates-even in the $2$-D case.

Despite these challenges,  we aim to establish such criteria; specifically; we first prove  $u \in H^k(\mathbb{T}^d \times [0,T])$ for some $k \in \mathbb{N}$, and then derive global regularity criteria in terms of $\|u\|_{L^p(0,T; \dot{B}^{s}_{q,r})}$ and $\|u_i\|_{L^p(0,T; \dot{B}^{s}_{q,r})}$.

With the above review of recent progress on the Kuramoto-Sivashinsky equation and the classical results in mind, we now turn to recent developments in physics-informed neural networks (PINNs) and outline how these advances enable us to construct a PINN framework for the Kuramoto-Sivashinsky equation-an approach which, to the best of our knowledge, has not been previously established and thus represents a unique and substantive contribution to the study of the Kuramoto-Sivashinsky equation.

\subsection{Physics-Informed Neural Networks}
Physics-informed neural networks (PINNs) have emerged as an efficient method for solving partial differential equations (PDEs), particularly in complex or nonlinear settings. While the foundational ideas date back to the 1990s (see, \cite{15a, 40l, 41p}), there has been a notable resurgence of interest in recent years, particularly in the works \cite{RAM, RA}. These approaches incorporate the underlying physical laws directly into the training process by penalizing the residuals of the governing PDEs, along with those arising from the initial and boundary conditions.  For a comprehensive overview of recent developments, we refer the reader to \cite{14p, KA21} and references therein. In addition, deriving error estimates for PINNs has recently drawn considerable attention from both mathematicians and physicists, leading to extensive investigations. For the reader's reference, we list a series of works \cite{DE, DE1, MS, MS1} that explore error analyses in this direction. Establishing rigorous error bounds for PINNs is particularly delicate for equations such as the two-dimensional Kuramoto-Sivashinsky equation.  Although the implementation of the PINN framework for the Kuramoto-Sivashinsky equation \eqref{KSE} is nontrivial, the  Kuramoto-Sivashinsky equation nevertheless serves as an excellent test case, as it can be regarded as a backward-type heat equation when the nonlinear and biharmonic terms are dropped-an inverse-problem setting in which PINNs perform remarkably well for both forward and inverse cases, whereas classical numerical schemes such as finite difference and finite element methods are primarily effective for forward computations. We highlight here at least two fundamental difficulties associated with the  Kuramoto-Sivashinsky equation \eqref{KSE}:\begin{enumerate}
    \item \textbf{(Difficulty 1)} One of the fundamental difficulties arises from the presence of both a biharmonic term and an anti-diffusive term in the  Kuramoto-Sivashinsky equation \eqref{KSE} which induces instability, and the application of Green's formula introduces additional higher-order boundary contributions originating from the biharmonic operator, whose control requires the use of trace inequalities thereby requiring that the solution possess sufficient smoothness on the boundary $\partial D$, typically at least of class $\mathbb{C}^4$, where $D$ is a periodic domain. 

    \item \textbf{(Difficulty 2)} The nonlinear term~$(u \cdot \nabla)u$ does not exhibit cancellation at the $L^2$-level of the energy or error analysis, thereby complicating both the derivation of rigorous error estimates and the corresponding computational implementation.
\end{enumerate}
 However, to gain a better understanding of how the PINNs framework for the  Kuramoto-Sivashinsky equation \eqref{KSE} operates, we refer the reader to the detailed discussion in Section~\ref{subsec:pinns}.

Although extensive research has been conducted on PINNs in recent years, applying them to the Kuramoto-Sivashinsky equation appears to be unique, as this equation-unlike the Navier-Stokes or Burgers equations-contains an anti-diffusive term, a higher-order biharmonic term, and lacks the divergence-free condition. Despite these challenges, we successfully overcome these difficulties by employing the PINN framework for the Kuramoto-Sivashinsky equation, establishing rigorous error estimates and numerical approximations, and visualizing the results by comparing the PINN solution with the exact solution, including graphical representations of the absolute error.
  
\subsection{Plan for the Paper:}
This paper is organized as follows. In Section \ref{sec.2}, we present the necessary preliminaries along with several auxiliary results established in this manuscript, such as Lemma \ref{12K} and Corollary \ref{C3.2}. In Section \ref{sec.3}, we  prove the main theoretical results, including the regularity results (Theorems and Propositions \ref{Th 3.2}-\ref{Theorem 3.10kM}) as well as the upper bounds on the PINN residuals and the corresponding error estimates (Theorems \ref{3.11K}, \ref{Th 4.3}, and \ref{4.4}). In Section \ref{sec4}, we present  the numerical experiments. Finally, in Appendix~\ref{Appen}, we present the local existence and regularity result, along with several classical results that are used frequently throughout this manuscript.

\section{Preliminaries}\label{sec.2}
We now recall and set up the notations and function spaces that will be frequently used throughout the manuscript
\subsection{Notations and Spaces}
To define Homogeneous  and inhomogeneous Sobolev spaces, we recall the Fourier series representation of $f$ on the $d=\{2,3\}$-dimensional torus: $\mathbb{T}^d$
$$
f(x) = \sum_{k \in \mathbb{Z}^d} \hat{f}(k) e^{ik \cdot x}, 
$$
and the inhomogeneous and homogeneous Sobolev norms
$$
\lVert  f \rVert_{H^s} =\left( \sum_{k \in \mathbb{Z}^d} (1 + \lvert k\rvert^{2s})\lvert \hat{f}(k)\rvert^2 \right)^{\frac12}, 
\quad 
\lVert f \rVert_{\dot{H}^s} = \left( \sum_{k \in \mathbb{Z}^d} \lvert k\rvert^{2s}\lvert \hat{f}(k)\rvert^2 \right)^{\frac12},
$$
respectively. 

The following section provides the definition of homogeneous Besov spaces and related foundational facts, as described in \cite{THS}.
\subsection{Besov Spaces}
We define the $d$-dimensional $2\pi$-periodic torus as
$
\mathbb{T}^d = \mathbb{R}^d / (2\pi \mathbb{Z}^d).$ We next  define the spaces $\mathcal{D}(\mathbb{T}^d)$, $\mathcal{D}_0(\mathbb{T}^d)$ and $\mathcal{S}(\mathbb{Z}^d)$ by

$$
\mathcal{D}(\mathbb{T}^d) \equiv \left\{ f \in C^\infty(\mathbb{T}^d) ; f \ \text{is } 2\pi\text{-periodic on each component } x_1, \ldots, x_d \right\},
$$
$$
\mathcal{D}_0(\mathbb{T}^d) \equiv \left\{ f \in \mathcal{D}(\mathbb{T}^d) ; \int_{[-\pi, \pi]^d} f(x) \, dx = 0 \right\},
$$
$$
\mathcal{S}(\mathbb{Z}^d) \equiv \left\{ g : \mathbb{Z}^d \to \mathbb{R}^d ; \forall s \ge 0, \ \exists c = c(g,s) > 0 \ \text{s.t.} \ \sup_{m \in \mathbb{Z}^d} (1 + |m|^2)^{\frac{s}{2}} |g(m)| < c \right\},
$$
and let $\mathcal{D}'(\mathbb{T}^d)$, $\mathcal{D}'_0(\mathbb{T}^d)$ and $\mathcal{S}'(\mathbb{Z}^d)$ be dual spaces of $\mathcal{D}(\mathbb{T}^d)$, $\mathcal{D}_0(\mathbb{T}^d)$ and $\mathcal{S}(\mathbb{Z}^d)$, respectively. In addition, we define the toroidal Fourier transform (the Fourier series) $\mathcal{F}^d: \mathcal{D}(\mathbb{T}^d) \to \mathcal{S}(\mathbb{Z}^d)$ by
\begin{align*}
\mathcal{F}_{\mathbb{T}^d} f(m) \equiv \frac{1}{(2\pi)^d} \int_{[-\pi,\pi]^d} f(x) e^{-i m \cdot x} dx, \quad f \in \mathcal{D}(\mathbb{T}^d), \ m \in \mathbb{Z}^d,
\end{align*}
and the inversion $
\mathcal{F}^{-1}_{\mathbb{T}^d} : \mathcal{S}(\mathbb{Z}^d) \to C^\infty(\mathbb{T}^d)
$ by
$$
\mathcal{F}_{\mathbb{T}^d} \equiv \sum_{m \in \mathbb{Z}^d} g(m) e^{i m \cdot x}, \quad g \in \mathcal{S}(\mathbb{Z}^d), \ x \in \mathbb{T}^d.
$$
We can also define these transforms in dual spaces, $\mathcal{F}^d: \mathcal{D}'_0(\mathbb{T}^d) \to \mathcal{S}'(\mathbb{Z}^d)$ and $\mathcal{F}^{-1, d}: \mathcal{S}'(\mathbb{Z}^d) \to \mathcal{D}'(\mathbb{T}^d)$, by
$$
\langle \mathcal{F}_{\mathbb{T}^d}, \varphi \rangle \equiv \langle f, \mathcal{F}^{-1, d} \varphi(-\cdot) \rangle, \quad f \in \mathcal{D}'_0(\mathbb{T}^d), \ \varphi \in \mathcal{S}(\mathbb{Z}^d),
$$
$$
\langle \mathcal{F}_{\mathbb{T}^d}, \psi \rangle \equiv \langle g, \mathcal{F}^d \psi(-\cdot) \rangle, \quad g \in \mathcal{S}'(\mathbb{Z}^d), \ \psi \in \mathcal{D}(\mathbb{T}^d).
$$
Moreover, we define the convolution of $f, g \in \mathcal{D}(\mathbb{T}^d)$ by
$$
(f * g)(x) \equiv \int_{[-\pi,\pi]^d} f(x-y) g(y) \, dy, \quad x \in \mathbb{T}^d.
$$
We also define the convolution of $(h, f) \in \mathcal{D}(\mathbb{T}^d) \times \mathcal{D}'(\mathbb{T}^d)$ by
$$
\langle h * f, \varphi \rangle \equiv \langle f, h(-\cdot) * \varphi \rangle = \int_{[-\pi,\pi]^d} \langle f, h(y - \cdot) \rangle \varphi(y) dy, \quad \varphi \in \mathcal{D}(\mathbb{T}^d),
$$
and it is seen that $h * f$ is actually in $\mathcal{D}(\mathbb{T}^d)$. We can also define that of $(h,f) \in \mathcal{D}_0(\mathbb{T}^d) \times \mathcal{D}'_0(\mathbb{T}^d)$ by a similar way.

Now let us define some important operators and spaces related to $\mathcal{D}_0(\mathbb{T}^d)$ and $\mathcal{D}'_0(\mathbb{T}^d)$. Since $\mathcal{F}^d f(0) = 0$ for every $f \in \mathcal{D}_0(\mathbb{T}^d)$, we can define the toroidal Riesz potential $I^s$ with $s \in \mathbb{R}$ and Riesz transform $R_k$ with $k = 1, \ldots, d$, on $\mathcal{D}'_0(\mathbb{T}^d)$ by
$$
I^s \equiv \mathcal{F}_{\mathbb{T}^d}^{-1} \left[ | m|^{-s} \mathcal{F}_{\mathbb{T}^d} f(m) \right], \quad R_k f \equiv \mathcal{F}_{\mathbb{T}^d}^{-1} \left[ i m_k |m|^{-1} \mathcal{F}_{\mathbb{T}^d} f(m) \right].
$$
Secondly, we define the homogeneous toroidal Besov spaces.
Take a non-negative smooth function $\phi \in C^\infty(\mathbb{R}^d)$ such that
\begin{align}
0 \le \phi \le 1, \quad \operatorname{supp} \phi \subset \{\, x \in \mathbb{R}^d \; ; \; 1/2 < |\xi| < 2 \,\}, \quad \sum_{j=-\infty}^{\infty} \phi( 2^{-j} \xi) = 1, \quad \forall \xi \in \mathbb{R}^d \setminus \{0\}.
\label{1A}
\end{align}
Then set
\begin{align}
\phi_j(\xi) \equiv \phi(2^{-j} \xi), \quad 
\varphi_j \equiv \mathcal{F}^{-1}_{\mathbb{T}^d} \big( \phi_j(\,\cdot\,) \big), \quad j \in \mathbb{Z}.
\label{2B}
\end{align}
Each $\varphi_j$ belongs to $\mathcal{D}_0(\mathbb{T}^d)$.
Moreover, since $\mathcal{F}_{\mathbb{T}^d} f(0) = 0$ for every $f \in \mathcal{D}_0(\mathbb{T}^d)$, it follows from \eqref{1A} and \eqref{2B} that
$$
\sum_{j=-\infty}^{\infty} \varphi_j * f = f, \quad \forall f \in \mathcal{D}'_0(\mathbb{T}^d).
$$
According to the above family $\{\varphi_j\}_{j\in\mathbb{Z}}$, we define the homogeneous toroidal Besov space $\dot{B}^s_{p,q}(\mathbb{T}^d)$ for $s \in \mathbb{R}$, $1 \le p, q \le \infty$ by
$$
\dot{B}^s_{p,q}(\mathbb{T}^d) \equiv
\left\{ f \in \mathcal{D}'_0(\mathbb{T}^d) \; ; \; \| f \|_{\dot{B}^s_{p,q}(\mathbb{T}^d)} < \infty \right\},
$$
with the norm
$$
\|f\|_{\dot{B}^s_{p,q}(\mathbb{T}^d)} \equiv
\begin{cases}
\left( \sum_{j=-\infty}^{\infty} \left( 2^{sj} \, \| \varphi_j * f \|_{L^p(\mathbb{T}^d)} \right)^q \right)^{1/q}, & 1 \le q < \infty, \\
\sup_{j\in\mathbb{Z}} \ 2^{sj} \, \| \varphi_j * f \|_{L^p(\mathbb{T}^d)}, & q = \infty.
\end{cases}
$$
Here $L^p(\mathbb{T}^d)$ is the space of $2\pi$-periodic measurable functions with the norm
$$
\|f\|_{L^p(\mathbb{T}^d)} \equiv
\begin{cases}
\left( \int_{[-\pi,\pi]^d} |f(x)|^p dx \right)^{1/p}, & 1 \le p < \infty, \\
\operatorname*{ess\,sup}_{x\in\mathbb{T}^d} |f(x)|, & p = \infty.
\end{cases}
$$
Revisiting the notations and function spaces introduced above, we next define the concepts of a strong solution to the Kuramoto-Sivashinsky equation \eqref{KSE} and establish a corresponding Lemma \ref{12K} ensuring that each term in the  Kuramoto-Sivashinsky equation \eqref{KSE} is well-defined  in~$L^2$.

\subsection{The Concepts of Solutions }  
\begin{definition}
We call $u \in L^\infty([0, T); H^2(\mathbb{T}^d))\cap L^2([0, T); H^4(\mathbb{T}^d))$ is a strong solution to the  Kuramoto-Sivashinsky equation \eqref{KSE} over a time interval \( [0, T) \) if for any \( \psi \in C^\infty(\mathbb{T}^d) \), 
\begin{align}
\langle \partial_t u, \psi \rangle + (u \cdot \nabla u, \psi) + (\Delta u, \Delta \psi) = \lambda (\nabla u, \nabla \psi). 
\end{align}  
\end{definition}
\begin{lemma}
\label{12K}Let $u$ be a strong solution, then $\partial_t u, (u \cdot \nabla)u, \Delta u, \textrm{and}\; \Delta^2 u \in L^2([0, T); L^2(\mathbb{T}^d))$.
\end{lemma}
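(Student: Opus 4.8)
The plan is to estimate each of the four quantities separately in $L^2([0,T);L^2(\mathbb{T}^d))$, handling the three explicit terms on the left-hand side of \eqref{KSE} first and then recovering $\partial_t u$ algebraically from the equation itself. Throughout I will exploit that $[0,T)$ is a bounded time interval, so that $L^\infty([0,T);X) \hookrightarrow L^2([0,T);X)$ for any Banach space $X$; this converts the $L^\infty_t$ control coming from the regularity class of a strong solution into the $L^2_t$ integrability we are after.

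For the two dissipative terms the conclusion is immediate from the definition of a strong solution. Since $\|\Delta u(t)\|_{L^2} \le \|u(t)\|_{H^2}$ and $u \in L^\infty([0,T);H^2)$, I obtain
\begin{equation*}
\int_0^T \|\Delta u(t)\|_{L^2}^2\,dt \le T\,\operatorname*{ess\,sup}_{t}\|u(t)\|_{H^2}^2 < \infty,
\end{equation*}
so $\Delta u \in L^2([0,T);L^2)$. Similarly, $\|\Delta^2 u(t)\|_{L^2} \le \|u(t)\|_{H^4}$ together with $u \in L^2([0,T);H^4)$ gives $\Delta^2 u \in L^2([0,T);L^2)$ directly, without even appealing to the boundedness of the interval.

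The delicate term, and the main obstacle, is the nonlinear advection $(u\cdot\nabla)u = \sum_{j} u_j \partial_j u$. Here I would control it pointwise in time by H\"older's inequality, $\|(u\cdot\nabla)u\|_{L^2} \le \|u\|_{L^\infty}\|\nabla u\|_{L^2}$, and then invoke the Sobolev embedding $H^2(\mathbb{T}^d)\hookrightarrow L^\infty(\mathbb{T}^d)$, which is precisely where the restriction $d\in\{2,3\}$ enters, since it requires $2 > d/2$. This yields $\|(u\cdot\nabla)u(t)\|_{L^2} \lesssim \|u(t)\|_{H^2}^2$, whence, using $u \in L^\infty([0,T);H^2)$ and the finiteness of the interval,
\begin{equation*}
\int_0^T \|(u\cdot\nabla)u(t)\|_{L^2}^2\,dt \lesssim T\,\operatorname*{ess\,sup}_{t}\|u(t)\|_{H^2}^4 < \infty.
\end{equation*}
(Alternatively one could use $H^2\hookrightarrow W^{1,4}$ and bound $\|u\|_{L^4}\|\nabla u\|_{L^4}$, but the $L^\infty$ route is cleaner.)

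Finally, I would recover the time derivative from the equation: rewriting \eqref{KSE} as $\partial_t u = -(u\cdot\nabla)u - \lambda\Delta u - \Delta^2 u$, each of the three terms on the right has just been shown to lie in $L^2([0,T);L^2)$, and since this is a vector space, so does $\partial_t u$. The crux of the argument is therefore the nonlinear term, where the subcritical Sobolev embedding in dimensions two and three is essential; the remaining steps follow routinely once the regularity class of the strong solution is used.
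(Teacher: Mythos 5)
Your proposal is correct and follows essentially the same route as the paper: $\Delta u$ and $\Delta^2 u$ from the regularity class of a strong solution, the nonlinearity via H\"older and the embedding $H^2(\mathbb{T}^d)\hookrightarrow L^\infty(\mathbb{T}^d)$ (valid precisely because $d\le 3$), and then $\partial_t u = -\bigl((u\cdot\nabla)u + \lambda\Delta u + \Delta^2 u\bigr)$ recovered from the equation. The only difference is one of rigor at the last step: since the paper's definition of a strong solution is a weak formulation, it justifies identifying the weak time derivative with this $L^2$-valued right-hand side by integrating the tested equation in time and invoking Proposition \ref{Prop 2.6}, a routine detail you assert algebraically but which would need to be spelled out.
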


\begin{proof} From the assumption $ u \in L^2([0, T); H^4)$ it follows immediately that $\Delta u \in L^2([0, T); L^2)$ and $\Delta^2 u \in L^2([0, T); L^2)$. For the nonlinear term, note that \( H^2 \hookrightarrow L^\infty \), so
\begin{align*}
\int_0^T \lVert (u\cdot \nabla) u\rVert_{L^2}^{2} dt \leq \int_0^T \lVert u(t)\rVert_{L^\infty}^2 \lVert \nabla u(x, t)\rVert_{L^2}^2  \, dt \leq C \|u\|_{L^\infty(0, T; H^1)} \|u\|_{L^2(0, T; H^2)}.
\end{align*}
Let  \( \phi \in C^\infty(\mathbb{T}^d) \) and  for every \( t \in [0, T] \), we have
\begin{align*}
\int_{\mathbb{T}^d} u(t) \cdot \phi \, dx &= \int_{\mathbb{T}^d} u^0 \cdot \phi \, dx + \int_0^t \int_{\mathbb{T}^d} \nabla u : \nabla \phi-\Delta u \cdot \Delta \phi- (u \cdot \nabla)u \cdot \phi \, dx \, ds
\nonumber\\
&= \int_{\mathbb{T}^d} u^0 \cdot \phi \, dx - \int_0^t \int_{\mathbb{T}^d}  \Delta^2 u\cdot \phi+ \Delta u \cdot \phi + (u \cdot \nabla)u \cdot \phi \, dx \, ds.
\end{align*}
Proposition \ref{Prop 2.6} allows us to take the time derivative of this equality to obtain, for every \( t \in [0, T) \),
\begin{align*}
\int_{\mathbb{T}^d} \partial_t u \cdot \phi \, dx =- \int_{\mathbb{T}^d}  \Delta^2 u\cdot \phi+ \Delta u \cdot \phi + (u \cdot \nabla)u \cdot \phi \, dx.
\end{align*}
and since $\phi$ was arbitrary, it follows that $\partial_t u= -(\Delta^2 u+ \Delta u  + (u \cdot \nabla)u)$. By the previous estimates, this is indeed in $L^2([0, T); L^2)$.
\end{proof}
The local existence and regularity  in higher Sobolev spaces, Theorem \ref{2.1H} is standard; however, since we  are unable to locate a precise reference in the existing literature for the  higher Sobolev spaces, so we provide a sketch of the proof in the Appendix for the reader's convenience.

However, depending upon Theorem \ref{2.1H}, we establish the following Corollary \ref{C3.2}  that proves  that $u$ is Sobolev regular, i.e., $u \in H^k(\mathbb{T}^d \times [0,T])$ for some $k \in \mathbb{N}$. 
\begin{corollary}
\label{C3.2}
If $k \in \mathbb{N}$ and $u_0 \in H^m(\mathbb{T}^d)$ with $m > \frac{d}{2} + 4k$, then
there exist $T > 0$ and a classical solution $u$ to the  Kuramoto-Sivashinsky equation \eqref{KSE} such that
$u \in H^k(\mathbb{T}^d \times [0,T])$ and $u(t=0) = u_0$.
\end{corollary}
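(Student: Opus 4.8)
The plan is to use the local existence result Theorem \ref{2.1H} to obtain a solution with high spatial regularity, and then to \emph{trade time derivatives for spatial derivatives} through the equation itself: each application of $\partial_t$ costs four spatial derivatives, since the highest-order term is the biharmonic operator $\Delta^2$. The hypothesis $m > \frac{d}{2} + 4k$ is exactly what guarantees that, after removing the $4k$ spatial derivatives consumed by $k$ time differentiations, one is still left with more than $\frac{d}{2}$ derivatives --- enough to keep every intermediate quantity in a Sobolev algebra and to close the nonlinear estimates.

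First I would invoke Theorem \ref{2.1H} with $u_0 \in H^m(\mathbb{T}^d)$, $m > \tfrac{d}{2} + 4k$, to produce $T>0$ and a solution $u \in C([0,T]; H^m(\mathbb{T}^d))$; since $m > \tfrac{d}{2}$ this solution is classical by Sobolev embedding. Rewriting \eqref{KSE} as in Lemma \ref{12K},
\[
\partial_t u = -\Delta^2 u - \lambda \Delta u - (u\cdot\nabla)u =: F(u),
\]
the map $u \mapsto F(u)$ is continuous from $H^{s}$ into $H^{s-4}$ whenever $s > \tfrac{d}{2} + 1$, because $H^s$ is a Banach algebra for $s > \tfrac{d}{2}$ (so that $(u\cdot\nabla)u \in H^{s-1}$), while $\Delta^2 u \in H^{s-4}$ is the least regular contribution.

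Next I would prove by induction on $j$ that $\partial_t^{\,j} u \in C\big([0,T]; H^{\,m-4j}(\mathbb{T}^d)\big)$ for $0 \le j \le k$. The base case $j=0$ is the output of Theorem \ref{2.1H}. For the inductive step, differentiating $\partial_t u = F(u)$ in time $j$ times and applying the Leibniz rule to the nonlinear term yields
\[
\partial_t^{\,j+1} u = -\Delta^2 \partial_t^{\,j} u - \lambda \Delta \partial_t^{\,j} u - \sum_{a+b=j} c_{a,b}\,\big(\partial_t^{\,a} u \cdot \nabla\big)\partial_t^{\,b} u .
\]
By the induction hypothesis each factor $\partial_t^{\,a} u$ lies in $H^{m-4a}$ and each $\nabla \partial_t^{\,b} u$ in $H^{m-4b-1}$; since $a+b = j \le k-1$ both indices are at least $m - 4k + 3 > \tfrac{d}{2}$, so the algebra/product estimate places every nonlinear term in $H^{m-4j-1}$, which is \emph{more} regular than the biharmonic contribution $\Delta^2\partial_t^{\,j} u \in H^{m-4j-4}$. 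As the right-hand side is a continuous map of the quantities controlled by the induction hypothesis into $H^{m-4(j+1)}$, we conclude $\partial_t^{\,j+1} u \in C([0,T]; H^{m-4(j+1)})$, completing the induction.

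Finally, for any multi-index $\alpha$ and integer $j$ with $j + |\alpha| \le k$, the preceding step gives
\[
\partial_t^{\,j}\partial_x^{\alpha} u = \partial_x^{\alpha}\big(\partial_t^{\,j} u\big) \in C\big([0,T]; H^{\,m-4j-|\alpha|}(\mathbb{T}^d)\big),
\]
and $m - 4j - |\alpha| \ge m - 4j - (k-j) = m - 3j - k \ge m - 4k > \tfrac{d}{2} > 0$, so each such derivative belongs to $C([0,T]; L^2(\mathbb{T}^d)) \subset L^2(\mathbb{T}^d \times [0,T])$. Summing over $j + |\alpha| \le k$ yields $u \in H^k(\mathbb{T}^d \times [0,T])$, and $u(0)=u_0$ holds by construction. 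The main obstacle is the bookkeeping for the nonlinear term under repeated time differentiation: one must verify that the Sobolev indices of all factors produced by the Leibniz rule stay above $\tfrac{d}{2}$ so that the product estimates apply, and that the biharmonic term --- not the nonlinearity --- remains the regularity-limiting contribution. The threshold $m > \tfrac{d}{2} + 4k$ is precisely the condition that keeps this bookkeeping consistent at every step.
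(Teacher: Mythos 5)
Your proposal is correct and follows essentially the same route as the paper's proof: invoke Theorem \ref{2.1H}, then bootstrap time regularity through the equation so that each application of $\partial_t$ costs four spatial derivatives, yielding $u \in \bigcap_{\ell=0}^{k} C^{\ell}([0,T]; H^{m-4\ell}(\mathbb{T}^d))$, and finally count mixed derivatives to land in $H^k(\mathbb{T}^d \times [0,T])$. The only difference is one of detail: you make explicit the Leibniz-rule and Sobolev-algebra bookkeeping for $\partial_t^{\,j}\big((u\cdot\nabla)u\big)$, which the paper compresses into ``repeating these steps'' and the observation that $\ell + m - 4\ell \ge k$.
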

\begin{proof}
 The proof follows directly from Theorem \ref{2.1H} for $k = 1$. Therefore, let
$k \geq 2$ be arbitrary and assume that $m > \frac{d}{2} + 4k$ and $u_0 \in H^m(\mathbb{T}^d)$.
By Theorem \ref{2.1H}, there exists $T > 0$ and a classical solution $u$ to the  Kuramoto-Sivashinsky equation \eqref{KSE} such that $u(t=0) = u_0$ and $u \in C([0,T]; H^m(\mathbb{T}^d)) \cap C^1([0,T]; H^{m-4}(\mathbb{T}^d))$.
Taking the time derivative of the  Kuramoto-Sivashinsky equation \eqref{KSE}, we find that $u_{tt} \in C([0,T]; H^{m-8}(\mathbb{T}^d))$
and therefore $u \in C^2([0,T]; H^{m-8}(\mathbb{T}^d))$. Repeating these steps, one can prove that
$u \in \cap_{\ell = 0}^k C^{\ell}([0,T]; H^{m-4\ell}(\mathbb{T}^d))$,
and therefore the conclusion follows from this observation since $ \ell+ m - 4\ell \geq k$ for all $0 \leq \ell \leq k$ if $m > \frac{d}{2} + 4k$.
\end{proof}
\begin{remark}
Corollary \ref{C3.2} is indeed a special feature of the KSE, arising from the fourth-order biharmonic diffusion.

Several key results, including Corollary \ref{C3.2}, used in deriving the upper bound of the PINN residual for the approximation, motivate us to revisit related preliminaries on neural networks introduced in \cite{DE}.

\end{remark}
\subsection{Neural Networks}
\label{4.1N}
\begin{definition}
Let $R\in(0,\infty]$, $L,W\in\mathbb{N}$ and $l_0,\ldots,l_L\in\mathbb{N}$. 
Let $\sigma:\mathbb{R}\to\mathbb{R}$ be a $C^4$ activation function. Define
\[
\Theta=\Theta_{L,W,R}:=
\bigcup_{\substack{L'\in\mathbb{N}\\ L'\le L}}
\ \bigcup_{\,l_0,\ldots,l_{L'}\in\{1,\ldots,W\}}
\ \prod_{j=1}^{L'}
\Big( [-R,R]^{\,l_j\times l_{j-1}} \times [-R,R]^{\,l_j}\Big).
\]
For $\theta\in\Theta_{L,W,R}$, we write $\theta_j:=(W_j,b_j)$ and define the affine maps 
\[
A_j^\theta:\mathbb{R}^{l_{j-1}}\to\mathbb{R}^{l_j}, 
\qquad 
A_j^\theta(x)=W_j x + b_j, 
\quad 1\le j\le L,
\]
and the layer functions
\[
f_j^\theta(z)=
\begin{cases}
A_j^\theta(z), & j=L,\\[2mm]
(\sigma\circ A_j^\theta)(z), & 1\le j<L.
\end{cases}
\]
The realization of the neural network associated with parameter $\theta$ is the function $u_\theta:\mathbb{R}^{l_0}\to\mathbb{R}^{l_{L}}$ given by
\[
u_\theta(z)=\big(f_{L}^\theta\circ f_{L-1}^\theta\circ\cdots\circ f_1^\theta\big)(z),
\qquad z\in\mathbb{R}^{l_0}.
\]
In the setting of approximating the Kuramoto-Sivashinsky equation \eqref{KSE}, we set $l_0=d+1$ and $z=(x,t)$. 
We call $u_\theta$ the realization of the neural network with $L$ layers and widths $(l_0,\ldots,l_{L})$. The first $L-1$ layers are called \emph{hidden layers}. 
For $1\le j\le L$, layer $j$ has width $l_j$, and $W_j$ and $b_j$ are the weights and biases of layer $j$. The \emph{width} of $u_\theta$ is defined as $\max(l_0,\ldots,l_{L})$. 
If $L=2$, we say $u_\theta$ is a \emph{shallow} neural network, while if $L\ge 3$, we say $u_\theta$ is a \emph{deep} neural network.
\end{definition}

\subsection{Quadrature rules}\label{suq}
We recall some basic results on numerical quadrature rules to approximate the integrals of functions. Let $\Omega \subset \mathbb{R}^d$ with $d=\{2,3\}$ and $g \in L^1(\Omega)$. To approximate
$$
\int_\Omega g(y)\,dy,
$$
a quadrature rule selects points $y_m \in \Lambda$ and positive weights $w_m$ for $1 \le m \le M$, and forms
$$
\frac{1}{M}\sum_{m=1}^M w_m\, g(y_m) \;\approx\; \int_\Omega g(y)\,dy.
$$
The accuracy depends on the quadrature rule, the number of  quadrature points $M$, and the regularity of $g$. Since in our setting ($d \leq 3$), we use the standard deterministic   numerical quadrature points.

Partition $\Omega$ into $M \sim N^d$ cubes of edge length $1/N$, and let $\{y_m\}_{m=1}^M$ be their midpoints. The midpoint rule is defined by
$$
Q_M^\Omega[g] \;:=\; \frac{1}{M} \sum_{m=1}^M g(y_m),
$$
and satisfies the error estimate
\begin{align}
\Bigl|\;\int_\Omega g(y)\,dy - Q_M^\Omega[g]\;\Bigr| \;\le\; C_g\, M^{-2/d},
\qquad C_g \lesssim \|g\|_{C^2}.
\label{quad1}
\end{align}
\subsection{Physics-informed neural networks}\label{subsec:pinns}
We seek deep neural networks $u_\theta : D \times [0,T] \to \mathbb{R}^d$, parameterized by $\theta \in \Theta$, constituting the weights and biases, that approximate the solution $u$ of the  Kuramoto-Sivashinsky equation \eqref{KSE}. To this end, the key idea behind PINNs is to consider pointwise residuals, defined in the setting of the Kuramoto-Sivashinsky equation \eqref{KSE} for any sufficiently smooth $v : D \times [0,T] \to \mathbb{R}^d$ as
\begin{equation}
\left\{
\begin{aligned}
\label{2.5PL}
\mathcal{R}_{\mathrm{PDE}}[v](x,t) &= \bigl(v_t + v \cdot \nabla v + \lambda \Delta v + \Delta^2 v \bigr)(x,t),\\
\mathcal{R}_{sb}^1[v](y,t) &= v(y,t) - \psi_1(y,t),\\
\mathcal{R}_{sb}^2[v](y,t) &= \Delta v(y,t) - \psi_2(y,t),\\
\mathcal{R}_{sb}^3[v](y,t) &= \Delta v(y,t) - \psi_3(y,t),\\
\mathcal{R}_{sb}^4[v](y,t) &= \Delta v(y,t) - \psi_4(y,t),\\
\mathcal{R}_t[v](x) &= v(x,0) - \varphi(x),
\end{aligned}
\right.
\end{equation}
for $x \in D$, $y \in \partial D$, $t \in [0,T]$. Here, $\psi_1, \psi_2, \psi_3, \psi_4 : \partial D \times [0,T] \to \mathbb{R}^d$ to specify the boundary data, and $\varphi : D \to \mathbb{R}^d$ is the initial condition.

Hence, within the PINN framework, one seeks $u_\theta$ such that all residuals are simultaneously minimized, by minimizing
\begin{align}
\label{2.6H}
E_G(\theta)^2
&=\int_{D\times[0,T]}\!\!\!\|\mathcal{R}_{\mathrm{PDE}}[u_\theta](x,t)\|_{\mathbb{R}^d}^2\,dx\,dt
+\int_{\partial D\times[0,T]}\!\!\!\bigl(
\|\mathcal{R}_{sb}^1[u_\theta](x,t)\|_{\mathbb{R}^d}^2
+\|\mathcal{R}_{sb}^2[u_\theta](x,t)\|_{\mathbb{R}^d}^2
\bigr)\,ds(x)\,dt
\nonumber\\
&\quad
+\int_{\partial D\times[0,T]}\!\!\!\bigl(
\|\mathcal{R}_{sb}^3[u_\theta](x,t)\|_{\mathbb{R}^d}^2
+\|\mathcal{R}_{sb}^4[u_\theta](x,t)\|_{\mathbb{R}^d}^2
\bigr)\,ds(x)\,dt+\int_{D}\|\mathcal{R}_t[u_\theta](x)\|_{\mathbb{R}^d}^2\,dx.
\end{align}
For simplicity, all weights are set to one. The quantity $E_G(\theta)$, often referred to as the generalization error of the neural network $u_\theta$, involves integrals that cannot be evaluated exactly in practice. Instead, the integrals in \eqref{2.6H} are approximated using numerical quadrature, as introduced in Section \ref{suq}. 

Accordingly, the (squared) training loss for PINNs is defined as
\begin{align}
\label{4.2ST}
E_T(\theta,S)^2 
&= E^{\mathrm{PDE}}_T(\theta,S_{\mathrm{int}})^2 
+ E^{sb,1}_T(\theta,S_{sb,1})^2 
+ E^{sb,2}_T(\theta,S_{sb,2})^2 
+ E^{sb,3}_T(\theta,S_{sb,3})^2 
+ E^{sb,4}_T(\theta,S_{sb,4})^2 
\nonumber\\
&
+ E^{t}_T(\theta,S_t)^2 
\nonumber\\
&=\sum_{n=1}^{N_{\mathrm{int}}} w^{\mathrm{int}}_n 
   \|\mathcal{R}_{\mathrm{PDE}}[u_\theta](t^{\mathrm{int}}_n, x^{\mathrm{int}}_n)\|_{\mathbb{R}^d}^2
 +\sum_{n=1}^{N_{sb}} w^{sb}_n 
   \big(\|\mathcal{R}_{sb}^1[u_\theta](t^{sb}_n, x^{sb}_n)\|_{\mathbb{R}^d}^2
   +\|\mathcal{R}_{sb}^2[u_\theta](t^{sb}_n, x^{sb}_n)\|_{\mathbb{R}^d}^2
\nonumber\\
&+\|\mathcal{R}_{sb}^3[u_\theta](t^{sb}_n, x^{sb}_n)\|_{\mathbb{R}^d}^2
+\|\mathcal{R}_{sb}^4[u_\theta](t^{sb}_n, x^{sb}_n)\|_{\mathbb{R}^d}^2\big) + \sum_{n=1}^{N_t} w^t_n\|\mathcal{R}_t[u_\theta](x^t_n)\|_{\mathbb{R}^d}^2.
\end{align}
Here, $S=(S_{\mathrm{int}},S_{sb,1},S_{sb,2}, S_{sb,3},S_{sb,4},S_t)$ denotes the set of quadrature points in the corresponding domains (respectively $D\times[0,T]$, $\partial D\times[0,T]$, and $D$), and $w_n^\ast$ are the associated quadrature weights. 

A trained PINN $u^* = u_{\theta^*}$ is then defined as a (local) minimizer of
\begin{equation}
\theta^*(S) = \arg\min_{\theta\in\Theta} E_T(\theta,S)^2,
\end{equation}
found by stochastic optimization methods such as ADAM or L-BFGS. Using the midpoint quadrature rule $Q_M$, the compact form of the training loss reads
\begin{align}
\label{34LK}
E_T(\theta,S)^2
&= Q^{\mathrm{int}}_M[R^2_{\mathrm{PDE}}]
 + Q^{sb,1}_M[R^2_{sb,1}]
 + Q^{sb,2}_M[R^2_{sb,2}]
 + Q^{sb,3}_M[R^2_{sb,3}]
 + Q^{sb,4}_M[R^2_{sb,4}]
 + Q^{t}_M[R^2_t].
\end{align}
In this manuscript, we address the following key theoretical questions for the Kuramoto-Sivashinsky equation:
\begin{itemize}
\item  \textbf{[Q1.]} \label{Q1.} Given a tolerance $\varepsilon > 0$, do there exist neural networks, such that the corresponding generalization error, $E_G(\overline{\theta})$ in \eqref{2.6H} and training error, $E_T(\hat{\theta}, S)$ in \eqref{4.2ST} are small, i.e.,
\[
E_G(\overline{\theta}),\; E_T(\hat{\theta}, S) < \varepsilon ?
\]

\item \textbf{[Q2.]} Given a PINN $u^*$ with small generalization error, is the corresponding total error $\lVert u - u^* \rVert$ small, i.e.,
\[
\lVert u - u^* \rVert < \delta(\varepsilon),
\]
for some $\delta(\varepsilon) \sim O(\varepsilon)$, with respect to a suitable norm $\lVert \cdot \rVert$, and with $u$ denoting the solution of the  Kuramoto-Sivashinsky equation \eqref{KSE}?

\item \textbf{[Q3.]} Given a small training error $E_T(\theta^*)$ and a sufficiently large training set $S$, is the corresponding generalization error $E_G(\theta^*)$ also proportionately small?
\end{itemize}

We now turn our focus to the following section, where despite the difficulties, we successfully establish integrability conditions that guarantee the regularity of solutions and yield rigorous error estimates.
\section{Main Results} 
The main results of this manuscript consist of Prodi-Serrin-type global regularity criteria presented in Section \ref{3.1KH}, and several $L^2$-error estimates for the PINNs in Section \ref{3.2}, expressed in terms of the residuals defined above in \ref{2.5PL}.

\label{sec.3}
\subsection{The Regularity Results}
\label{3.1KH}
We next prove the sufficient space-time integrability conditions for global regularity, as stated in Theorems \ref{Th 3.2} to \ref{Theorem 3.10kM}.
\begin{theorem}
\label{Th 3.2}
Let $u_0 \in H^{2}({\mathbb{T}^d})$. If $ u \in  
L^{\frac{8}{5}}(0,T,\dot{B}_{\infty, \infty}^{-\frac{1}{2}}(\mathbb{T}^d))$,  
then the  Kuramoto-Sivashinsky equation \eqref{KSE} admits a global strong solution, i.e., $u \in L^\infty([0, T]; H^2(\mathbb{T}^d))\cap L^2([0, T]; H^4(\mathbb{T}^d))$.
\end{theorem}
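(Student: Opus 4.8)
The plan is to run a continuation (blow-up) argument. By the local theory (Theorem~\ref{2.1H} and Corollary~\ref{C3.2}) there is a maximal time $T^\ast\in(0,T]$ and a strong solution $u\in L^\infty([0,T^\ast);H^2)\cap L^2([0,T^\ast);H^4)$, and $u$ extends beyond $T^\ast$ unless $\limsup_{t\uparrow T^\ast}\|u(t)\|_{H^2}=\infty$. Hence it suffices to produce an a priori bound on $\|u(t)\|_{H^2}$, uniform on $[0,T^\ast)$ and depending only on $\|u_0\|_{H^2}$ and $\|u\|_{L^{8/5}(0,T;\dot{B}^{-1/2}_{\infty,\infty})}$, which forces $T^\ast=T$. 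Since the estimates below use fourth-order derivatives, I would first carry them out for smooth solutions (approximating $u_0\in H^2$ by $u_0^\varepsilon\in H^m$ with $m$ large, so that Corollary~\ref{C3.2} yields genuinely smooth solutions on which all integrations by parts are licit), derive the bounds with constants independent of $\varepsilon$, and pass to the limit.

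For the a priori estimate I would test \eqref{KSE} against $\Delta^2 u$ and integrate over $\mathbb{T}^d$, using periodicity, to obtain the $\dot{H}^2$ energy identity
\[
\tfrac12\frac{d}{dt}\|\Delta u\|_{L^2}^2+\|\Delta^2 u\|_{L^2}^2
=\lambda\|\nabla\Delta u\|_{L^2}^2-\int_{\mathbb{T}^d}(u\cdot\nabla)u\cdot\Delta^2 u\,dx .
\]
The anti-diffusive linear term is harmless: by interpolation $\|\nabla\Delta u\|_{L^2}^2=\|u\|_{\dot{H}^3}^2\le\|u\|_{\dot{H}^2}\|u\|_{\dot{H}^4}$ and Young's inequality, $\lambda\|u\|_{\dot{H}^3}^2\le\tfrac14\|\Delta^2 u\|_{L^2}^2+C\lambda^2\|\Delta u\|_{L^2}^2$, so it is absorbed into the dissipation up to a lower-order term.

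The heart of the argument is the nonlinear contribution $N:=\int_{\mathbb{T}^d}(u\cdot\nabla)u\cdot\Delta^2 u\,dx$, which, lacking the divergence-free cancellation, must be controlled purely by a product estimate. Viewing $N$ as a trilinear form carrying five spatial derivatives, I would (after the Littlewood--Paley trichotomy) place the transport factor in the rough norm $\dot{B}^{-1/2}_{\infty,\infty}$ and the two remaining factors in $\dot{H}^{11/4}$, which respects both the regularity budget $-\tfrac12+\tfrac{11}{4}+\tfrac{11}{4}=5$ and the integrability budget $\tfrac1\infty+\tfrac12+\tfrac12=1$; the paraproduct/product law in homogeneous Besov spaces then gives $|N|\lesssim\|u\|_{\dot{B}^{-1/2}_{\infty,\infty}}\|u\|_{\dot{H}^{11/4}}^2$. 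Interpolating $\|u\|_{\dot{H}^{11/4}}^2\le\|u\|_{\dot{H}^2}^{5/4}\|u\|_{\dot{H}^4}^{3/4}$ and applying Young's inequality with conjugate exponents $(8/3,8/5)$ to the factor $\|u\|_{\dot{H}^4}^{3/4}$ yields exactly
\[
|N|\ \le\ \tfrac14\|\Delta^2 u\|_{L^2}^2+C\,\|u\|_{\dot{B}^{-1/2}_{\infty,\infty}}^{8/5}\,\|\Delta u\|_{L^2}^2 ,
\]
the conjugate exponent $8/5$ being precisely the scaling-critical power in the hypothesis. Inserting both bounds into the energy identity gives $\frac{d}{dt}\|\Delta u\|_{L^2}^2\le g(t)\|\Delta u\|_{L^2}^2$ with $g(t)=C\|u\|_{\dot{B}^{-1/2}_{\infty,\infty}}^{8/5}+C\lambda^2\in L^1(0,T)$, so Grönwall's inequality bounds $\sup_{[0,T^\ast)}\|\Delta u(t)\|_{L^2}$ and, after time integration, $\|\Delta^2 u\|_{L^2(0,T;L^2)}$. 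The zeroth- and first-order norms are then controlled by a simpler energy estimate (the non-vanishing term $\int(u\cdot\nabla)u\cdot u=-\tfrac12\int(\nabla\cdot u)\,|u|^2$ becoming harmless once $\|\Delta u\|_{L^2}$ is already bounded), which together with the above closes the full $H^2$ bound and hence the continuation.

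The main obstacle is the estimate for $N$. Because incompressibility fails, no structural identity helps and the entire bound must come from the product law; the delicate point is to split the five derivatives so that the critical velocity norm enters at the exact integrable power $8/5$ while the top-order norm $\|u\|_{\dot{H}^4}$ appears with power $3/4<2$, so that it can be absorbed into the biharmonic dissipation. I would also verify the admissibility (positivity of the relevant index sums, e.g.\ $-\tfrac12+\tfrac{11}{4}>0$ and $\tfrac{11}{4}+\tfrac{11}{4}>0$) that legitimizes the paraproduct bound, and justify the fourth-order integrations by parts for merely $H^2$ data via the approximation step described above.
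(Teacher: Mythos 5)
Your proposal is correct in substance, but it reaches the a priori bound by a genuinely different route than the paper. The paper never tests against $\Delta^2 u$: it runs a staged cascade, first an $L^2$ estimate in which the Besov hypothesis enters through the refined interpolation inequality of Lemma \ref{lemma_3.12k} (giving $\lVert u\rVert_{L^4}^2\lesssim \lVert u\rVert_{\dot B^{-1/2}_{\infty,\infty}}\lVert \Lambda^{1/2}u\rVert_{L^2}$), then an $H^1$ estimate with $\Delta u$ as test function in which the nonlinearity is handled by $\dot H^2\hookrightarrow L^\infty$ and the previously obtained $\lVert\Delta u\rVert_{L^2}^2\in L^1_t$, followed by bootstrapping to $L^\infty_t H^2\cap L^2_t H^4$. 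You instead attack the top level directly: one $\dot H^2$ energy identity, with the Besov norm entering once through the trilinear paraproduct bound $|N|\lesssim \lVert u\rVert_{\dot B^{-1/2}_{\infty,\infty}}\lVert u\rVert_{\dot H^{11/4}}^2$, and the lower-order norms cleaned up afterwards. That bound is indeed valid: by duality $|N|\le \lVert u\cdot\nabla u\rVert_{\dot H^{5/4}}\lVert u\rVert_{\dot H^{11/4}}$, and the trichotomy pieces $T_u\nabla u$, $T_{\nabla u}u$, $R(u,\nabla u)$ all land in $\dot H^{5/4}$ (the remainder condition $-\tfrac12+\tfrac74>0$ holds, as you checked). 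It is worth noticing that your critical algebra is literally the paper's shifted up two derivatives: the paper arrives at $\lVert u\rVert_{\dot B^{-1/2}_{\infty,\infty}}\lVert u\rVert_{L^2}^{5/4}\lVert \Delta u\rVert_{L^2}^{3/4}$ and applies Young with exponents $(8/3,8/5)$, while you arrive at $\lVert u\rVert_{\dot B^{-1/2}_{\infty,\infty}}\lVert u\rVert_{\dot H^2}^{5/4}\lVert u\rVert_{\dot H^4}^{3/4}$ and apply the same split — both producing the critical power $8/5$. What each route buys: the paper's cascade stays elementary (one interpolation lemma plus standard Sobolev/Gagliardo--Nirenberg, no Littlewood--Paley trichotomy) but uses the Besov hypothesis only at the lowest level and then must climb; your one-shot estimate needs heavier paraproduct machinery but controls $\sup_t\lVert\Delta u\rVert_{L^2}$ with a Gr\"onwall factor $g(t)=C\lVert u(t)\rVert_{\dot B^{-1/2}_{\infty,\infty}}^{8/5}+C\lambda^2$ depending only on the hypothesis, with no intermediate regularity needed, and your explicit continuation scaffolding is more careful than the paper's purely formal estimates.

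One wrinkle in your justification step deserves flagging: if you regularize the data ($u_0^\varepsilon\in H^m$) and derive bounds for the approximate solutions $u^\varepsilon$, your Gr\"onwall constant involves $\lVert u^\varepsilon\rVert_{L^{8/5}(0,T;\dot B^{-1/2}_{\infty,\infty})}$, and the hypothesis controls only the Besov norm of $u$ itself, not of $u^\varepsilon$ uniformly in $\varepsilon$ — so the approximation argument as stated does not close. The fix is that the approximation is dispensable: for a strong solution, Lemma \ref{12K} gives $u_t,\ \Delta^2u\in L^2([0,T);L^2)$, so the pairing of the equation with $\Delta^2u$ is licit, and the identity $\frac{d}{dt}\lVert\Delta u\rVert_{L^2}^2=2\langle u_t,\Delta^2u\rangle$ is justified by Lemma \ref{2.7} applied to $v=\Delta u\in L^2_tH^2$ with $v'\in L^2_tH^{-2}$; alternatively, instantaneous parabolic smoothing of the biharmonic semigroup lets you run the estimate on $u$ itself on $[\delta,T^\ast)$ and let $\delta\to0$. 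With that repair your argument is complete.
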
  
\begin{proof}
Taking $L^2 (\mathbb{T}^d)$-inner products on the  Kuramoto-Sivashinsky equation \eqref{KSE}  with $u$
    \begin{align}
    \frac{1}{2} \frac{d}{dt} \lVert u \rVert_{L^{2}}^{2}+  \lVert \Delta u \rVert_{L^{2}}^{2} &=  - \int_{\mathbb{T}^2} (u\cdot\nabla) u \cdot u dx+ \lambda \lVert \nabla u \rVert_{L^{2}}^{2}
    \nonumber\\
    & \leq c \lVert  u\rVert_{L^4}^{2} \lVert \nabla u\rVert_{L^2} + \frac{1}{4}\lVert \Delta u\rVert_{L^2}^{2}+c\lVert  u\rVert_{L^2}^2
    \nonumber\\
    & \leq c  \lVert  u\rVert_{\dot{B}_{\infty, \infty}^{-\frac{1}{2}}} \lVert \Lambda^{\frac{1}{2}}u\rVert_{L^2} \lVert \nabla u \rVert_{L^2} + \frac{1}{4}\lVert \Delta u\rVert_{L^2}^{2}+c\lVert  u\rVert_{L^2}^2
    \nonumber\\
    & \leq c \lVert  u\rVert_{\dot{B}_{\infty, \infty}^{-\frac{1}{2}}}^{\frac{8}{5}} \lVert  u\rVert_{L^2}^{2}+ \frac{1}{4}\lVert \Delta u \rVert_{L^2}^{2}+ \frac{1}{4}\lVert \Delta u\rVert_{L^2}^{2}+c\lVert  u\rVert_{L^2}^2,
\end{align} 
where we applied H\"older inequality, Lemma  \ref{lemma_3.12k}, inequality \ref{est 500k}, Young's inequality,  and Gr\"onwall's inequality to deduce $u \in L_t^{\infty} L_x^2 \cap L_t^{2}H_x^2$.

Taking $L^2 (\mathbb{T}^d)$-inner products on the  Kuramoto-Sivashinsky equation \eqref{KSE}  with $\Delta u$
\begin{align}
  \frac{1}{2} \frac{d}{dt} \lVert \nabla u \rVert_{L^2}^2+   \lVert \nabla \Delta u \rVert_{L^2}^2=&\lambda \lVert \Delta u \rVert_{L^2}^{2} - \int_{\mathbb{T}^2} (u\cdot\nabla) u \cdot \Delta u dx
  \nonumber\\
  & \leq   \lVert \nabla u\rVert_{L^2}  \lVert u\rVert_{L^{\infty}}  \lVert \Delta u\rVert_{L^2}+ \lambda \lVert \Delta u\rVert_{L^2}^2
    \nonumber\\
    & \leq \frac{1}{8}\lVert \nabla \Delta u \rVert_{L^2}^{2}+c  \lVert \nabla u\rVert_{L^2}^{2} \lVert \Delta u \rVert_{L^2}^{2 } + \frac{1}{8}\lVert \nabla \Delta u\rVert_{L^2}^{2}+c\lVert \nabla u\rVert_{L^2}^2,
    \end{align}
where we applied H\"older's inequality, inequality \eqref{est 500k}, $\dot{H}^2(\mathbb{T}^d) \hookrightarrow L^{\infty}(\mathbb{T}^d)$,  Young's,  Gagliardo-Nirenberg interpolation inequality, and we can complete the theorem with Gr\"onwall's inequality and the bootstrapping technique.
\end{proof}

\begin{proposition}\label{Proposition 2.3k}
Let $ u_1 \in L^2(0,T;\dot{B}_{\infty, \infty}^{-\frac{1}{2}}(\mathbb{T}^2)) \cap L^2(0,T; \dot{B}_{\infty, \infty}^{\frac{1}{2}}(\mathbb{T}^2))$, then $u_{2} \in L_{t}^{\infty} L_x^{2} \cap L_{t}^{2} H_{x}^{2}$.
\end{proposition}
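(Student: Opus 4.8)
The plan is to run an $L^2$ energy estimate for the single scalar component $u_2$, exploiting the special algebraic structure of the two-dimensional nonlinearity together with the assumed integrability of $u_1$. Writing the system \eqref{KSE} componentwise in $d=2$, the second component solves $\partial_t u_2 + u_1\partial_1 u_2 + u_2\partial_2 u_2 + \lambda\Delta u_2 + \Delta^2 u_2 = 0$. Taking the $L^2(\mathbb{T}^2)$-inner product with $u_2$ and integrating by parts (periodicity) gives
\[
\tfrac12\tfrac{d}{dt}\|u_2\|_{L^2}^2 + \|\Delta u_2\|_{L^2}^2 = \lambda\|\nabla u_2\|_{L^2}^2 - \int_{\mathbb{T}^2}\!\big(u_1\partial_1 u_2 + u_2\partial_2 u_2\big)u_2\,dx .
\]
The first observation I would record is that the self-interaction term vanishes by periodicity, $\int u_2^2\partial_2 u_2 = \tfrac13\int\partial_2(u_2^3)=0$, and that the anti-diffusive term is harmless by interpolation, $\lambda\|\nabla u_2\|_{L^2}^2 \le \lambda\|u_2\|_{L^2}\|\Delta u_2\|_{L^2}\le \tfrac14\|\Delta u_2\|_{L^2}^2 + C\|u_2\|_{L^2}^2$, and can be absorbed into the biharmonic dissipation exactly as in Theorem~\ref{Th 3.2}. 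Thus the entire difficulty collapses onto the single cross term $\int u_1\partial_1 u_2\cdot u_2 = -\tfrac12\int(\partial_1 u_1)u_2^2$, which must be controlled \emph{purely} through the Besov norms of $u_1$, since $u_2$ carries no a priori regularity.

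To estimate this term I would use the duality between $\dot B^{-1/2}_{\infty,\infty}$ and $\dot B^{1/2}_{1,1}$. Two symmetric routes are available: keeping the derivative on $u_1$ gives $\big|\int(\partial_1 u_1)u_2^2\big|\lesssim \|\partial_1 u_1\|_{\dot B^{-1/2}_{\infty,\infty}}\|u_2^2\|_{\dot B^{1/2}_{1,1}}\lesssim \|u_1\|_{\dot B^{1/2}_{\infty,\infty}}\|u_2^2\|_{\dot B^{1/2}_{1,1}}$, while leaving it on $u_2$ gives $\big|\int u_1\partial_1(u_2^2)\big|\lesssim \|u_1\|_{\dot B^{-1/2}_{\infty,\infty}}\|\partial_1(u_2^2)\|_{\dot B^{1/2}_{1,1}}$. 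In either case I would apply a Besov product (Leibniz) estimate $\|u_2^2\|_{\dot B^{s}_{1,1}}\lesssim \|u_2\|_{L^2}\|u_2\|_{\dot B^{s}_{2,1}}$ followed by the Littlewood–Paley/interpolation bound $\|u_2\|_{\dot B^{s}_{2,1}}\lesssim \|u_2\|_{L^2}^{1-s/2}\|\Delta u_2\|_{L^2}^{s/2}$ (valid by frequency splitting for $0<s<2$). This reduces the cross term to $\|u_1\|_{\dot B^{\pm1/2}_{\infty,\infty}}$ times a high power of $\|u_2\|_{L^2}$ times a subcritical power of $\|\Delta u_2\|_{L^2}$. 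One application of Young's inequality then absorbs the $\|\Delta u_2\|_{L^2}$-factor into the dissipation and leaves a coefficient of $\|u_2\|_{L^2}^2$ of the form $C\|u_1\|_{\dot B^{\pm1/2}_{\infty,\infty}}^{\gamma}$ with $\gamma\in\{8/7,\,8/5\}$, both strictly less than $2$.

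Having reached the differential inequality $\tfrac{d}{dt}\|u_2\|_{L^2}^2 + \|\Delta u_2\|_{L^2}^2 \le C\big(1+\|u_1\|_{\dot B^{\pm1/2}_{\infty,\infty}}^{\gamma}\big)\|u_2\|_{L^2}^2$, I would close by Grönwall's inequality. The coefficient is integrable in time precisely because $\gamma<2$, $[0,T]$ is finite, and $u_1\in L^2(0,T;\dot B^{-1/2}_{\infty,\infty})\cap L^2(0,T;\dot B^{1/2}_{\infty,\infty})$, so Hölder's inequality yields $\int_0^T\|u_1\|_{\dot B^{\pm1/2}_{\infty,\infty}}^{\gamma}\,dt<\infty$. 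Grönwall then gives $u_2\in L^\infty(0,T;L^2)$, and integrating the retained dissipation term over $[0,T]$ gives $u_2\in L^2(0,T;\dot H^2)$; since $\|u_2\|_{H^2}^2\lesssim \|u_2\|_{L^2}^2+\|\Delta u_2\|_{L^2}^2$ on the torus, the uniform $L^2$-bound upgrades this to $u_2\in L^2_t H^2_x$, which is the claim.

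I expect the cross term to be the main obstacle. Unlike Theorem~\ref{Th 3.2}, where the full vector $u$ supplies the Besov norm and one can symmetrize the nonlinearity through $\nabla\cdot u$, here only $u_1$ carries integrability, so the estimate must be arranged so that every $u_2$-factor is paired either against $\|u_2\|_{L^2}$ or against an absorbable power of $\|\Delta u_2\|_{L^2}$. Getting this bookkeeping right—choosing the exponent $s\in\{1/2,3/2\}$ in the Besov product estimate so that the resulting Young exponent $\gamma$ stays below $2$—is exactly what makes the two assumed norms $\dot B^{-1/2}_{\infty,\infty}$ and $\dot B^{1/2}_{\infty,\infty}$ on $u_1$ sufficient, and is the delicate point of the argument.
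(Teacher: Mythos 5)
Your proposal is correct, and its skeleton coincides with the paper's: the same componentwise $L^2$ energy identity for $u_2$, the same cancellation $\int u_2^2\,\partial_2 u_2=\tfrac13\int\partial_2(u_2^3)=0$, the same absorption of $\lambda\|\nabla u_2\|_{L^2}^2$ via interpolation (the paper's inequality \eqref{est 500k} plus Young), and Gr\"onwall at the end. Where you genuinely diverge is the treatment of the cross term $\int u_1(\partial_1 u_2)u_2$. The paper does \emph{not} integrate by parts: it applies H\"older as $\|u_1\|_{L^4}\|\nabla u_2\|_{L^4}\|u_2\|_{L^2}$, controls $\|u_1\|_{L^4}\lesssim\|u_1\|_{\dot B^{-1/2}_{\infty,\infty}}^{1/2}\|u_1\|_{\dot B^{1/2}_{\infty,\infty}}^{1/2}$ by its interpolation Lemma \ref{lemma_3.12k} (thus using the two assumed norms multiplicatively, as a geometric mean), bounds $\|\nabla u_2\|_{L^4}\lesssim\|\Lambda^{3/2}u_2\|_{L^2}$ by the 2D embedding $\dot H^{1/2}\hookrightarrow L^4$, and closes with Young to obtain the Gr\"onwall coefficient $c(\|u_1\|_{\dot B^{-1/2}_{\infty,\infty}}^{2}+\|u_1\|_{\dot B^{1/2}_{\infty,\infty}}^{2}+1)$, which is integrable precisely because both norms are assumed in $L^2_t$. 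Your route instead moves the derivative by parts and invokes Besov duality $\dot B^{-1/2}_{\infty,\infty}$--$\dot B^{1/2}_{1,1}$ together with a Bony paraproduct product estimate and real interpolation with gain of summability; all three ingredients are valid as you state them (the product estimate $\|u_2^2\|_{\dot B^{s}_{1,1}}\lesssim\|u_2\|_{L^2}\|u_2\|_{\dot B^{s}_{2,1}}$ holds for $s>0$, and the convexity bound for $\dot B^s_{2,1}$ holds for $0<s<2$), and your Young exponents $8/7$ and $8/5$ are computed correctly. What your approach buys is sharper bookkeeping: each of your two routes uses only \emph{one} of the two Besov norms, so your argument in fact shows that $u_1\in L^{8/7}(0,T;\dot B^{1/2}_{\infty,\infty})$ or $u_1\in L^{8/5}(0,T;\dot B^{-1/2}_{\infty,\infty})$ alone would suffice for this proposition, a mild strengthening the paper's geometric-mean estimate does not expose; what the paper's route buys is brevity, needing only H\"older, one off-the-shelf interpolation lemma, and Sobolev embedding, with no paraproduct machinery. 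One incidental inaccuracy in your closing commentary: Theorem \ref{Th 3.2} does not ``symmetrize the nonlinearity through $\nabla\cdot u$''---the equation has no divergence-free structure and the paper's proof there likewise proceeds by direct H\"older/interpolation---but this does not affect your proof.
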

\begin{proof}
Taking $L^{2}(\mathbb{T}^2)$-inner products with $u_{2}$ on the second component of the  Kuramoto-Sivashinsky equation \eqref{KSE} gives 
\begin{align}
\label{est 2.21kMS}
\frac{1}{2} \frac{d}{dt} \lVert u_{2} \rVert_{L^{2}}^{2}  + \lVert \Delta u_{2} \rVert_{L^{2}}^{2}& = - \int_{\mathbb{T}^2} (u\cdot\nabla) u_{2} u_{2} dx + \lambda \lVert \nabla u_{2} \rVert_{L^{2}}^{2}
\nonumber\\
&= - \int_{\mathbb{T}^2} u_{1} (\partial_{1} u_{2}) u_{2} dx - \int_{\mathbb{T}^2} \frac{1}{3} \partial_{2} (u_{2})^{3} dx+c\lVert u_{2} \rVert_{L^{2}}^{2}+ \frac{1}{4}\lVert \Delta u_{2} \rVert_{L^{2}}^{2}
\nonumber \\
&\leq \lVert u_1\rVert_{L^4} \lVert \nabla u_2\rVert_{L^4} \lVert u_2\rVert_{L^2}+c\lVert u_{2} \rVert_{L^{2}}^{2}+ \frac{1}{4}\lVert \Delta u_{2} \rVert_{L^{2}}^{2} 
\nonumber\\
\leq& c\lVert u_{1} \rVert_{\dot{B}_{\infty, \infty}^{-\frac{1}{2}}}^{\frac{1}{2}} \lVert u_{1} \rVert_{\dot{B}_{\infty, \infty}^{\frac{1}{2}}}^{\frac{1}{2}} \lVert u_{2} \rVert_{L^2} \lVert \Lambda^{\frac{3}{2}} u_{2} \rVert_{L^2}+ c \lVert u_{2} \rVert_{L^{2}}^{2}+ \frac{1}{4}\lVert \Delta u_{2} \rVert_{L^{2}}^{2} \nonumber\\
\leq&  \frac{1}{4} \lVert \Delta u_{2} \rVert_{L^{2}}^{2}+\frac{1}{4}\lVert \Delta u_{2} \rVert_{L^{2}}^{2} + c (\lVert u_1\rVert_{\dot{B}_{\infty, \infty}^{-\frac{1}{2}}}^{2} + \lVert u_{1}\rVert_{\dot{B}_{\infty, \infty}^{\frac{1}{2}}}^{2}+1) \lVert u_{2} \rVert_{L^{2}}^{2},
\end{align} 
where we applied H$\ddot{\mathrm{o}}$lder's inequality, Lemma \ref{lemma_3.12k}, inequality \eqref{est 500k}, and Young's inequality.
By applying the Gr\"onwall's inequality, \eqref{est 2.21kMS} implies the desired result. 
\end{proof}

\begin{proposition}\label{Proposition 2.4k}
 Let  $ u_1 \in L^2(0,T;\dot{B}_{\infty, \infty}^{-\frac{1}{2}}(\mathbb{T}^2)) \cap L^2(0,T; \dot{B}_{\infty, \infty}^{\frac{1}{2}}(\mathbb{T}^2))$, then $u_{1} \in L_{t}^{\infty} L_x^{2} \cap L_{t}^{2} H_{x}^{2}$. 
\end{proposition}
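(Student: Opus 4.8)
The plan is to exploit the fact that the hypothesis here is \emph{identical} to that of Proposition \ref{Proposition 2.3k}, so that its conclusion is available as an intermediate ingredient. First I would invoke Proposition \ref{Proposition 2.3k} to obtain $u_2 \in L_t^\infty L_x^2 \cap L_t^2 H_x^2$; in particular, by the two-dimensional embedding $H^2(\mathbb{T}^2)\hookrightarrow L^\infty(\mathbb{T}^2)$ one has $\|u_2\|_{L^\infty} \in L_t^2(0,T)$. This supplies the time-integrable coefficient that will drive the Gr\"onwall argument for $u_1$, playing the role that the Besov norms of $u_1$ played in the proof of Proposition \ref{Proposition 2.3k}.

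Next I would perform the $L^2$-energy estimate on the first component of \eqref{KSE}. Taking the $L^2(\mathbb{T}^2)$-inner product of the $u_1$-equation with $u_1$ and integrating the biharmonic and anti-diffusive terms by parts (using periodicity) yields
$$\frac{1}{2}\frac{d}{dt}\|u_1\|_{L^2}^2 + \|\Delta u_1\|_{L^2}^2 = -\int_{\mathbb{T}^2} u_2\,(\partial_2 u_1)\,u_1\,dx + \lambda\|\nabla u_1\|_{L^2}^2,$$
where the self-interaction part of the nonlinearity vanishes since $\int_{\mathbb{T}^2} u_1^2\,\partial_1 u_1\,dx = \tfrac{1}{3}\int_{\mathbb{T}^2}\partial_1(u_1^3)\,dx = 0$. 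The only surviving nonlinear contribution is the cross term $-\int u_2\,(\partial_2 u_1)\,u_1$, which is exactly where the difficulty lies: the hypothesis controls $u_1$, not $u_2$, in Besov spaces, so this term cannot be closed using the assumed norms alone. This is the main obstacle, and it is precisely what the first step resolves by furnishing control on $u_2$.

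To close the estimate, I would bound the cross term by H\"older's inequality as $\|u_2\|_{L^\infty}\|u_1\|_{L^2}\|\nabla u_1\|_{L^2}$, then use the interpolation $\|\nabla u_1\|_{L^2}\le \|u_1\|_{L^2}^{1/2}\|\Delta u_1\|_{L^2}^{1/2}$ together with Young's inequality to absorb a fraction of $\|\Delta u_1\|_{L^2}^2$ into the left-hand side; the anti-diffusive term $\lambda\|\nabla u_1\|_{L^2}^2$ is treated identically. This produces a differential inequality of the form
$$\frac{d}{dt}\|u_1\|_{L^2}^2 + \|\Delta u_1\|_{L^2}^2 \le c\bigl(\|u_2\|_{L^\infty}^{4/3} + 1\bigr)\|u_1\|_{L^2}^2.$$
Since $\|u_2\|_{L^\infty}\in L_t^2(0,T)$ by the first step, the coefficient $\|u_2\|_{L^\infty}^{4/3}+1$ lies in $L_t^{1}(0,T)$, so Gr\"onwall's inequality gives $u_1\in L_t^\infty L_x^2$, and integrating the resulting bound in time yields $u_1\in L_t^2\dot H_x^2$; combined with the $L_t^\infty L_x^2$ bound this is the claimed $u_1\in L_t^\infty L_x^2\cap L_t^2 H_x^2$. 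I expect the only bookkeeping care needed is the choice of Young exponents that keeps the power of $\|\Delta u_1\|_{L^2}$ strictly below $2$ while leaving a coefficient on $\|u_1\|_{L^2}^2$ that remains integrable in time.
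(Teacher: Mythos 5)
Your proposal is correct and follows essentially the same route as the paper: the paper likewise invokes Proposition \ref{Proposition 2.3k} to obtain $u_2 \in L_t^\infty L_x^2 \cap L_t^2 H_x^2$, performs the $L^2$-energy estimate on the $u_1$-equation (with the self-interaction term $\int_{\mathbb{T}^2} u_1^2\,\partial_1 u_1\,dx$ vanishing by periodicity), bounds the cross term via H\"older, $H^2(\mathbb{T}^2)\hookrightarrow L^\infty(\mathbb{T}^2)$, the interpolation $\|\nabla u_1\|_{L^2}\le \|u_1\|_{L^2}^{1/2}\|\Delta u_1\|_{L^2}^{1/2}$, and Young's inequality, and arrives at the identical differential inequality with coefficient $c\bigl(\|u_2\|_{H^2}^{4/3}+1\bigr)$ before closing with Gr\"onwall. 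Your exponent bookkeeping (the $4/3$ power, integrable in time since $\|u_2\|_{H^2}\in L_t^2$) matches the paper exactly.
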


\begin{proof}
Taking $L^{2}(\mathbb{T}^2)$-inner products on the first component of the  Kuramoto-Sivashinsky equation $\eqref{KSE}$ with $u_{1}$ and compute  
\begin{align}\label{ est 2.2K}
\frac{1}{2} \frac{d}{dt} \lVert u_{1} \rVert_{L^{2}}^{2} + \lVert \Delta u_{1} \rVert_{L^{2}}^{2}& = - \int_{\mathbb{T}^2} (u\cdot\nabla) u_{1} u_{1} dx+ \lambda \lVert \nabla u_{1} \rVert_{L^{2}}^{2} 
\nonumber\\
 =&- \int_{\mathbb{T}^2} u_{1} \partial_{1} u_{1} u_{1} dx - \int_{\mathbb{T}^2} u_{2} \partial_{2} u_{1} u_{1}dx+\lambda \lVert \nabla u_{1} \rVert_{L^{2}}^{2} \nonumber\\
\leq& \lVert \nabla u_{1} \rVert_{L^{2}} \lVert u_{2} \rVert_{L^{\infty}}  \lVert u_{1} \rVert_{L^{2}}+\lambda \lVert \Delta u_{1} \rVert_{L^{2}} \lVert u_{1} \rVert_{L^{2}}  \nonumber\\
\leq& c\lVert u_{2} \rVert_{H^{2}} \lVert u_{1} \rVert_{L^{2}}^{\frac{3}{2}} \lVert \Delta u_{1} \rVert_{L^{2}}^{\frac{1}{2}}+ c\lVert u_{1} \rVert_{L^{2}}^{2}+ \frac{1}{4}\lVert \Delta u_{1} \rVert_{L^{2}}^{2} 
\nonumber\\
&\leq \frac{1}{2} \lVert \Delta u_{1} \rVert_{L^{2}}^{2} + c (\lVert u_{2} \rVert_{H^{2}}^{\frac{4}{3}}+1) \lVert u_{1} \rVert_{L^{2}}^{2}, 
\end{align} 
where we applied H$\ddot{\mathrm{o}}$lder's inequality, Sobolev embedding $H^{2}(\mathbb{T}^2) \hookrightarrow L^{\infty}(\mathbb{T}^2)$, inequality \eqref{est 500k},  and Young's inequality. This implies that $u_{1} \in L_{t}^{\infty} L_{x}^{2} \cap L_{t}^{2}H_{x}^{2}$ because we know $\lVert u_{2}(t) \rVert_{H^{2}} \in L_{t}^{2}$ due to Proposition \ref{Proposition 2.3k}.
\end{proof} 

\begin{theorem}
Let $u_0 \in H^2(\mathbb{T}^2)$. If $u_1 \in L^2(0,T;\dot{B}_{\infty, \infty}^{-\frac{1}{2}}(\mathbb{T}^2)) \cap L^2(0,T; \dot{B}_{\infty, \infty}^{\frac{1}{2}}(\mathbb{T}^2))$, then the  Kuramoto-Sivashinsky equation \eqref{KSE} admits a global strong solution, i.e., $u \in L^\infty([0, T]; H^2(\mathbb{T}^2))\cap L^2([0, T]; H^4(\mathbb{T}^2))$.
\end{theorem}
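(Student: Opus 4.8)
The plan is to reduce the statement to the higher-order energy argument already carried out in the proof of Theorem~\ref{Th 3.2}, the only difference being that the base-level estimate is now supplied by the two preceding propositions rather than by a direct zeroth-order estimate. The hypothesis on $u_1$ is precisely the one appearing in Propositions~\ref{Proposition 2.3k} and~\ref{Proposition 2.4k}, so I would first invoke them: Proposition~\ref{Proposition 2.3k} gives $u_2\in L_t^\infty L_x^2\cap L_t^2 H_x^2$, and Proposition~\ref{Proposition 2.4k} (whose proof already consumes the former) gives $u_1\in L_t^\infty L_x^2\cap L_t^2 H_x^2$. Combining the two components yields the base estimate $u=(u_1,u_2)\in L^\infty([0,T];L^2(\mathbb{T}^2))\cap L^2([0,T];H^2(\mathbb{T}^2))$, which plays exactly the role of the level-zero bound in Theorem~\ref{Th 3.2}.

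From here I would run two higher-order energy estimates. First, take the $L^2(\mathbb{T}^2)$-inner product of \eqref{KSE} with $\Delta u$; integrating by parts (all boundary terms vanish by periodicity) gives
\begin{align*}
\tfrac12\tfrac{d}{dt}\|\nabla u\|_{L^2}^2+\|\nabla\Delta u\|_{L^2}^2=\lambda\|\Delta u\|_{L^2}^2-\int_{\mathbb{T}^2}(u\cdot\nabla)u\cdot\Delta u\,dx.
\end{align*}
Bounding the nonlinear term by $\|u\|_{L^\infty}\|\nabla u\|_{L^2}\|\Delta u\|_{L^2}$, using $\dot H^2(\mathbb{T}^2)\hookrightarrow L^\infty(\mathbb{T}^2)$ together with the interpolation $\|\Delta u\|_{L^2}\le c\|\nabla u\|_{L^2}^{1/2}\|\nabla\Delta u\|_{L^2}^{1/2}$ and Young's inequality, I would absorb the top-order term into the left side and obtain
\begin{align*}
\tfrac{d}{dt}\|\nabla u\|_{L^2}^2+\|\nabla\Delta u\|_{L^2}^2\le c\big(\|\Delta u\|_{L^2}^2+1\big)\|\nabla u\|_{L^2}^2.
\end{align*}
Since $\|\Delta u\|_{L^2}^2\in L^1_t$ by the base estimate, Gr\"onwall's inequality yields $u\in L^\infty([0,T];H^1)\cap L^2([0,T];H^3)$.

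I would then bootstrap once more, taking the inner product of \eqref{KSE} with $\Delta^2 u$ to get
\begin{align*}
\tfrac12\tfrac{d}{dt}\|\Delta u\|_{L^2}^2+\|\Delta^2 u\|_{L^2}^2=\lambda\|\nabla\Delta u\|_{L^2}^2-\int_{\mathbb{T}^2}(u\cdot\nabla)u\cdot\Delta^2 u\,dx.
\end{align*}
The anti-diffusive term $\lambda\|\nabla\Delta u\|_{L^2}^2$ is handled by interpolating $\dot H^3$ between $\dot H^2$ and $\dot H^4$, i.e. $\|\nabla\Delta u\|_{L^2}\le c\|\Delta u\|_{L^2}^{1/2}\|\Delta^2 u\|_{L^2}^{1/2}$, and absorbing a fraction of $\|\Delta^2 u\|_{L^2}^2$ by Young. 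For the nonlinear term I would move $\Delta$ onto $(u\cdot\nabla)u$, expand by the product rule, and estimate each resulting piece with H\"older, the $2$-D inequality $\|f\|_{L^4}\le c\|f\|_{L^2}^{1/2}\|\nabla f\|_{L^2}^{1/2}$, and the same $\dot H^3$ interpolation, spending the highest derivative against $\|\Delta^2 u\|_{L^2}^2$. This leaves a differential inequality whose Gr\"onwall coefficient is a sum of terms such as $\|\nabla u\|_{L^2}^{4/3}$ and $\|\Delta u\|_{L^2}^2$, all of which lie in $L^1_t$ by the base and first-bootstrap bounds; Gr\"onwall then delivers $u\in L^\infty([0,T];H^2)\cap L^2([0,T];H^4)$, as claimed.

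The main obstacle, as throughout this section, is the nonlinearity $(u\cdot\nabla)u$, which enjoys no cancellation at any energy level because of the missing divergence-free condition; in particular the term $-\tfrac12\int_{\mathbb{T}^2}(\nabla\cdot u)|\Delta u|^2\,dx$ produced in the $\Delta^2 u$ estimate does not vanish. The remedy is uniform: estimate the nonlinearity in an $\|\cdot\|_{L^\infty}\|\cdot\|_{L^2}\|\cdot\|_{L^2}$ (or integrated-by-parts) form, trade the top-order factor against the dissipation $\|\nabla\Delta u\|_{L^2}^2$ or $\|\Delta^2 u\|_{L^2}^2$ by Young, and verify that the remaining prefactor multiplying the lower-order energy is time-integrable thanks to the bound one level below. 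The only delicate bookkeeping is checking that the Gagliardo--Nirenberg exponents align so that each Gr\"onwall coefficient lands in $L^1_t$; once the base estimate $u\in L^2_tH^2$ is secured from Propositions~\ref{Proposition 2.3k}--\ref{Proposition 2.4k}, this bookkeeping is word-for-word the same as in Theorem~\ref{Th 3.2}.
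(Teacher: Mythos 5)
Your proposal is correct and takes essentially the same route as the paper's own proof: invoke Propositions \ref{Proposition 2.3k} and \ref{Proposition 2.4k} to secure the base estimate $u=(u_1,u_2)\in L_t^\infty L_x^2\cap L_t^2 H_x^2$, then test \eqref{KSE} with $\Delta u$ and apply Gr\"onwall to obtain $u\in L^\infty([0,T];H^1)\cap L^2([0,T];H^3)$, and bootstrap to the stated $H^2$/$H^4$ regularity. You merely spell out what the paper compresses into ``bootstrapping for the higher regularity,'' namely the $\Delta^2 u$ energy step and the verification that each Gr\"onwall coefficient lies in $L^1_t$.
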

\begin{proof}
Taking $L^2 (\mathbb{T}^2)$-inner products on the  Kuramoto-Sivashinsky equation \eqref{KSE}  with $\Delta u$
\begin{align}
\frac{1}{2} \frac{d}{dt} \lVert& \nabla u \rVert_{L^2}^2+   \lVert \nabla \Delta u \rVert_{L^2}^2= - \int_{\mathbb{T}^2} (u\cdot\nabla) u \cdot \Delta u dx+\lambda \lVert \Delta u \rVert_{L^2}^{2}
\nonumber\\
&=\lambda \lVert \Delta u \rVert_{L^2}^{2} - \sum_{k=1}^{3} \int_{\mathbb{T}^2} \begin{pmatrix}
u_1 \partial_1 u_1 +u_2 \partial_2 u_1\\
u_1 \partial_1 u_2 +u_2 \partial_2 u_2
\end{pmatrix} \cdot \begin{pmatrix}
\partial_k^2 u_1\\
\partial_k^2 u_2
\end{pmatrix}
dx
\nonumber\\
&= \lambda \lVert \Delta u \rVert_{L^2}^{2} - \sum_{k=1}^{3} \int_{\mathbb{T}^2}
u_1 \partial_1 u_1 \partial_k^2 u_1+u_2 \partial_2 u_1 \partial_k^2 u_1+
u_1 \partial_1 u_2 \partial_k^2 u_2 +u_2 \partial_2 u_2 \partial_k^2 u_2   dx,
\end{align}
after applying  Proposition \ref{Proposition 2.3k} and Proposition \ref{Proposition 2.4k}, Gr\"onwall's inequality implies $u \in L^\infty([0, T]; H^1(\mathbb{T}^2))\cap L^2([0, T]; H^3(\mathbb{T}^2))$. This completes the proof of the Theorem by bootstrapping for the higher regularity.
\end{proof}

We now extend our analysis to the three-dimensional case and establish the results presented in \ref{prop_3.6k}-\ref{3.12H}. In particular, we derive a sufficient global regularity criteria involving both $u_1$ and $u_2$; nonetheless, it remains an open problem whether a similar criterion can be obtained based solely on one component, either $u_1$ or $u_2$.

\begin{proposition}\label {prop_3.6k}
Let $ u_1,u_2 \in L^2(0,T;\dot{B}_{\infty, \infty}^{-\frac{1}{2}}(\mathbb{T}^3)) \cap L^2(0,T; \dot{B}_{\infty, \infty}^{\frac{1}{2}}(\mathbb{T}^3))$, then $u_{3} \in L_{t}^{\infty} L_{x}^{2} \cap L_{t}^{2} H_{x}^{2}$.
\end{proposition}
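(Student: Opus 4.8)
The plan is to follow the scheme of Proposition~\ref{Proposition 2.3k}, testing the third component of \eqref{KSE} against $u_3$ and exploiting the single cancellation that survives in the absence of a divergence-free condition. Taking the $L^2(\mathbb{T}^3)$-inner product of the third component of \eqref{KSE} with $u_3$ and integrating by parts in the dissipative terms gives
\begin{align*}
\frac{1}{2}\frac{d}{dt}\lVert u_3\rVert_{L^2}^2+\lVert\Delta u_3\rVert_{L^2}^2=\lambda\lVert\nabla u_3\rVert_{L^2}^2-\int_{\mathbb{T}^3}(u\cdot\nabla)u_3\,u_3\,dx.
\end{align*}
The anti-diffusive term is harmless: by \eqref{est 500k} and Young's inequality $\lambda\lVert\nabla u_3\rVert_{L^2}^2\le\tfrac18\lVert\Delta u_3\rVert_{L^2}^2+c\lVert u_3\rVert_{L^2}^2$, so it is absorbed on the left and into the Gr\"onwall factor. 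Expanding $(u\cdot\nabla)u_3=u_1\partial_1u_3+u_2\partial_2u_3+u_3\partial_3u_3$, the diagonal term carries the only cancellation, since $u_3(\partial_3u_3)u_3=\tfrac13\partial_3(u_3^3)$ integrates to zero by periodicity; hence only
\begin{align*}
-\int_{\mathbb{T}^3}(u\cdot\nabla)u_3\,u_3\,dx=-\int_{\mathbb{T}^3}u_1(\partial_1u_3)u_3\,dx-\int_{\mathbb{T}^3}u_2(\partial_2u_3)u_3\,dx
\end{align*}
remains. This is exactly why control of \emph{both} $u_1$ and $u_2$ is needed in three dimensions, in contrast to Proposition~\ref{Proposition 2.3k}, where the corresponding computation leaves only a single term.

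The crux is bounding each surviving term $\int u_i(\partial_iu_3)u_3\,dx$, $i\in\{1,2\}$. The two-dimensional split $\lVert u_i\rVert_{L^4}\lVert\nabla u_3\rVert_{L^4}\lVert u_3\rVert_{L^2}$ of Proposition~\ref{Proposition 2.3k} relied on the embedding $\lVert\nabla u_3\rVert_{L^4}\lesssim\lVert\Lambda^{\frac{3}{2}}u_3\rVert_{L^2}$, which is special to $d=2$; in $d=3$ the same $L^4$-norm of $\nabla u_3$ costs $\Lambda^{\frac{7}{4}}u_3$, and reconciling this higher cost with the available dissipation is the main obstacle. I would instead place the rough factor $u_i$ at the scaling-critical $L^\infty$ level, invoking the scale-invariant interpolation
\begin{align*}
\lVert u_i\rVert_{L^\infty}\lesssim\lVert u_i\rVert_{\dot{B}_{\infty,\infty}^{-\frac12}}^{\frac12}\lVert u_i\rVert_{\dot{B}_{\infty,\infty}^{\frac12}}^{\frac12}
\end{align*}
underlying Lemma~\ref{lemma_3.12k} (obtained by splitting the Littlewood--Paley sum at an optimized frequency), and keep $\nabla u_3,u_3$ in $L^2$. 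H\"older together with \eqref{est 500k} in the form $\lVert\nabla u_3\rVert_{L^2}\le\lVert u_3\rVert_{L^2}^{1/2}\lVert\Delta u_3\rVert_{L^2}^{1/2}$ then yields
\begin{align*}
\Big|\int_{\mathbb{T}^3}u_i(\partial_iu_3)u_3\,dx\Big|\le\lVert u_i\rVert_{L^\infty}\lVert\nabla u_3\rVert_{L^2}\lVert u_3\rVert_{L^2}\lesssim\lVert u_i\rVert_{\dot{B}_{\infty,\infty}^{-\frac12}}^{\frac12}\lVert u_i\rVert_{\dot{B}_{\infty,\infty}^{\frac12}}^{\frac12}\lVert u_3\rVert_{L^2}^{3/2}\lVert\Delta u_3\rVert_{L^2}^{1/2}.
\end{align*}
Young's inequality with conjugate exponents $4$ and $4/3$ absorbs the dissipation as $\tfrac18\lVert\Delta u_3\rVert_{L^2}^2$ and leaves $c\,\lVert u_i\rVert_{\dot{B}_{\infty,\infty}^{-\frac12}}^{2/3}\lVert u_i\rVert_{\dot{B}_{\infty,\infty}^{\frac12}}^{2/3}\lVert u_3\rVert_{L^2}^2$.

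Collecting both off-diagonal terms together with the linear contribution and absorbing the dissipative pieces on the left gives
\begin{align*}
\frac{d}{dt}\lVert u_3\rVert_{L^2}^2+\lVert\Delta u_3\rVert_{L^2}^2\le c\Big(\sum_{i=1}^{2}\lVert u_i\rVert_{\dot{B}_{\infty,\infty}^{-\frac12}}^{2/3}\lVert u_i\rVert_{\dot{B}_{\infty,\infty}^{\frac12}}^{2/3}+1\Big)\lVert u_3\rVert_{L^2}^2.
\end{align*}
Under the hypothesis $u_1,u_2\in L^2(0,T;\dot{B}_{\infty,\infty}^{-\frac12})\cap L^2(0,T;\dot{B}_{\infty,\infty}^{\frac12})$, each product $\lVert u_i\rVert_{\dot{B}_{\infty,\infty}^{-\frac12}}^{2/3}\lVert u_i\rVert_{\dot{B}_{\infty,\infty}^{\frac12}}^{2/3}$ lies in $L^{3/2}(0,T)\subset L^1(0,T)$ by H\"older, so the Gr\"onwall coefficient is integrable. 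Gr\"onwall's inequality then yields $u_3\in L^\infty(0,T;L^2)$, and integrating the differential inequality in time gives $u_3\in L^2(0,T;H^2)$, which is the assertion. The only genuinely three-dimensional difficulty is the replacement of the $L^4$-based H\"older split by the $L^\infty$-based one; the remainder is the same interpolation-and-Gr\"onwall scheme as in two dimensions.
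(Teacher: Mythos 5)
Your proof is correct, and at the decisive step it takes a genuinely different---and in fact more robust---route than the paper. Both arguments share the same skeleton: test the third component of \eqref{KSE} with $u_3$, kill the diagonal nonlinear term via $u_3(\partial_3 u_3)u_3=\tfrac13\partial_3(u_3^3)$, absorb the anti-diffusive term by interpolation and Young, and close with Gr\"onwall using the $L^2_t$ Besov hypothesis on $u_1,u_2$. The difference is the H\"older split on $\int_{\mathbb{T}^3} u_i(\partial_i u_3)u_3\,dx$: the paper's proof (see \eqref{est 2.20k}) uses the $(L^4,L^4,L^2)$ split and then bounds $\lVert \nabla u_3\rVert_{L^4}\lesssim \lVert \Lambda^{3/2}u_3\rVert_{L^2}$, exactly as in the two-dimensional Proposition~\ref{Proposition 2.3k}; but that step rests on $\dot H^{1/2}(\mathbb{T}^2)\hookrightarrow L^4(\mathbb{T}^2)$, and on $\mathbb{T}^3$ the $L^4$ norm of $\nabla u_3$ costs $\dot H^{3/4}$, i.e.\ $\lVert \Lambda^{7/4}u_3\rVert_{L^2}$ --- your observation on this point is accurate, and the paper's displayed chain is a 2D carryover (it is repairable within the paper's scheme, since $\lVert \Lambda^{7/4}u_3\rVert_{L^2}\le \lVert u_3\rVert_{L^2}^{1/8}\lVert \Delta u_3\rVert_{L^2}^{7/8}$ and Young then leaves Besov powers $16/9<2$, still time-integrable). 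Your $(L^\infty,L^2,L^2)$ split, with $\lVert u_i\rVert_{L^\infty}\lesssim \lVert u_i\rVert_{\dot B^{-1/2}_{\infty,\infty}}^{1/2}\lVert u_i\rVert_{\dot B^{1/2}_{\infty,\infty}}^{1/2}$ by frequency splitting and $\lVert \nabla u_3\rVert_{L^2}\le \lVert u_3\rVert_{L^2}^{1/2}\lVert \Delta u_3\rVert_{L^2}^{1/2}$, avoids any dimension-dependent Sobolev embedding altogether; the price is only a different Gr\"onwall weight, $\lVert u_i\rVert_{\dot B^{-1/2}_{\infty,\infty}}^{2/3}\lVert u_i\rVert_{\dot B^{1/2}_{\infty,\infty}}^{2/3}\in L^{3/2}(0,T)\subset L^1(0,T)$ rather than the paper's squared norms, and both are integrable under the stated hypothesis. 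Two minor points: the $L^\infty$ interpolation is not literally an instance of Lemma~\ref{lemma_3.12k} (which requires $r<\infty$), so the optimized Littlewood--Paley splitting should be stated or cited explicitly, with the usual care about the zero mode since the homogeneous norms here live on $\mathcal{D}'_0(\mathbb{T}^3)$; and, as everywhere in this section of the paper, the computation is an a priori estimate presupposing a sufficiently regular local solution --- a caveat your argument shares with the original.
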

\begin{proof}
Taking $L^{2}(\mathbb{T}^3)$-inner products with $u_{2}$ on the second component of the  Kuramoto-Sivashinsky equation \eqref{KSE} gives  
\begin{align}
\label{est 2.20k}
\frac{1}{2} \frac{d}{dt} \lVert u_{3} \rVert_{L^{2}}^{2} &+ \lVert \Delta u_{3} \rVert_{L^{2}}^{2}  = - \int_{\mathbb{T}^3} (u\cdot\nabla) u_{3} u_{3} dx+ \lambda \lVert \nabla u_{3} \rVert_{L^{2}}^{2}
\nonumber\\
&= - \int_{\mathbb{T}^3} u_{1} \partial_{1} u_{3} u_{3}dx- \int_{\mathbb{T}^3} u_{2} \partial_{2} u_{3} u_{3} dx+\lambda \lVert u_{3} \rVert_{L^{2}} \lVert \Delta u_{3} \rVert_{L^{2}} dx 
\nonumber \\
&\leq (\lVert u_1\rVert_{L^4}+ \lVert u_2\rVert_{L^4})  \lVert \nabla u_3\rVert_{L^4} \lVert u_3\rVert_{L^2}+\lambda \lVert u_{3} \rVert_{L^{2}} \lVert \Delta u_{3} \rVert_{L^{2}}
\nonumber\\
\leq& c(\lVert u_{1} \rVert_{\dot{B}_{\infty, \infty}^{\frac{1}{2}}}^{\frac{1}{2}} \lVert u_{1} \rVert_{\dot{B}_{\infty, \infty}^{-\frac{1}{2}}}^{\frac{1}{2}} +\lVert u_{2} \rVert_{\dot{B}_{\infty, \infty}^{\frac{1}{2}}}^{\frac{1}{2}} \lVert u_{2} \rVert_{\dot{B}_{\infty, \infty}^{-\frac{1}{2}}}^{\frac{1}{2}} )\lVert u_{3} \rVert_{L^2} \lVert \Lambda^{\frac{3}{2}} u_{3} \rVert_{L^2}+c \lVert u_{3} \rVert_{L^{2}}^{2}
\nonumber\\
&+ \frac{1}{4}\lVert \Delta u_{3} \rVert_{L^{2}}^{2} \nonumber\\
\leq & \frac{1}{4} \lVert \Delta  u_{3} \rVert_{L^{2}}^{2}+\frac{1}{4}\lVert \Delta u_{3} \rVert_{L^{2}}^{2}
\nonumber\\
&+ c (\lVert u_1\rVert_{\dot{B}_{\infty, \infty}^{\frac{1}{2}}}^{2} + \lVert u_1 \rVert_{\dot{B}_{\infty, \infty}^{-\frac{1}{2}}}^{2}+\lVert u_2\rVert_{\dot{B}_{\infty, \infty}^{\frac{1}{2}}}^{2} + \lVert u_2 \rVert_{\dot{B}_{\infty, \infty}^{-\frac{1}{2}}}^{2}+1) \lVert u_{3} \rVert_{L^{2}}^{2},
\end{align} 
where we applied H$\ddot{\mathrm{o}}$lder's inequality, inequality \eqref{est 500k}, Lemma \ref{lemma_3.12k}, and Young's inequality.
By applying the Gr\"onwall's inequality, \eqref{est 2.20k} implies the desired result. 
\end{proof} 
\begin{proposition} \label{Thm 3.28k}
Let $ u_1,u_2 \in L^2(0,T;\dot{B}_{\infty, \infty}^{-\frac{1}{2}}(\mathbb{T}^3)) \cap L^2(0,T;  L^2(0,T;\dot{B}_{\infty, \infty}^{\frac{1}{2}}(\mathbb{T}^3))$, then $u_{1},u_{2} \in L_{t}^{\infty} L_{x}^{2} \cap L_{t}^{2} H_{x}^{2}$.
\end{proposition}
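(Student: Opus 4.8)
The plan is to treat $u_1$ and $u_2$ together as a coupled system and to close a single Gr\"onwall estimate for the combined energy $\lVert u_1\rVert_{L^2}^2+\lVert u_2\rVert_{L^2}^2$, rather than bootstrapping the two components one after the other as was possible in the planar Propositions \ref{Proposition 2.3k}--\ref{Proposition 2.4k}. First I would take $L^2(\mathbb{T}^3)$--inner products of the first and second components of \eqref{KSE} with $u_1$ and $u_2$, respectively, to obtain for $i=1,2$
\begin{align*}
\frac{1}{2}\frac{d}{dt}\lVert u_i\rVert_{L^2}^2+\lVert\Delta u_i\rVert_{L^2}^2=-\int_{\mathbb{T}^3}(u\cdot\nabla)u_i\,u_i\,dx+\lambda\lVert\nabla u_i\rVert_{L^2}^2.
\end{align*}
Writing $(u\cdot\nabla)u_i=u_1\partial_1u_i+u_2\partial_2u_i+u_3\partial_3u_i$, the diagonal terms $\int_{\mathbb{T}^3}u_i\partial_i u_i\,u_i\,dx=\tfrac{1}{3}\int_{\mathbb{T}^3}\partial_i(u_i^3)\,dx$ vanish by periodicity, so for each $i$ there remain exactly one cross term in the other horizontal component and one term carrying $u_3$.

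I would control the two $u_3$--terms, $\int_{\mathbb{T}^3}u_3\partial_3u_1\,u_1\,dx$ and $\int_{\mathbb{T}^3}u_3\partial_3u_2\,u_2\,dx$, by the Sobolev embedding $H^2(\mathbb{T}^3)\hookrightarrow L^\infty(\mathbb{T}^3)$ together with the interpolation inequality \eqref{est 500k}, namely $\lVert\nabla u_i\rVert_{L^2}\le c\lVert u_i\rVert_{L^2}^{1/2}\lVert\Delta u_i\rVert_{L^2}^{1/2}$, which yields a bound $c\lVert u_3\rVert_{H^2}\lVert u_i\rVert_{L^2}^{3/2}\lVert\Delta u_i\rVert_{L^2}^{1/2}$; Young's inequality then absorbs the top-order factor into $\tfrac{1}{8}\lVert\Delta u_i\rVert_{L^2}^2$ and leaves $c\lVert u_3\rVert_{H^2}^{4/3}\lVert u_i\rVert_{L^2}^2$. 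Here it is essential that $\lVert u_3(t)\rVert_{H^2}\in L^2_t$ is already supplied by Proposition \ref{prop_3.6k}, so that $\lVert u_3\rVert_{H^2}^{4/3}\in L^1_t$. The two cross terms $\int_{\mathbb{T}^3}u_2\partial_2u_1\,u_1\,dx$ and $\int_{\mathbb{T}^3}u_1\partial_1u_2\,u_2\,dx$ I would estimate exactly as in Propositions \ref{Proposition 2.3k} and \ref{prop_3.6k}: by H\"older's inequality, Lemma \ref{lemma_3.12k} and \eqref{est 500k} the advecting factor ($u_2$, respectively $u_1$) is controlled by $\lVert\cdot\rVert_{\dot{B}^{-1/2}_{\infty,\infty}}^{1/2}\lVert\cdot\rVert_{\dot{B}^{1/2}_{\infty,\infty}}^{1/2}$, the advected component pays a top-order factor $\lVert\Lambda^{3/2}u_i\rVert_{L^2}$, and Young's inequality sends the resulting top-order contribution into $\tfrac{1}{4}\lVert\Delta u_i\rVert_{L^2}^2$ while producing a prefactor $\lVert u_j\rVert_{\dot{B}^{-1/2}_{\infty,\infty}}^2+\lVert u_j\rVert_{\dot{B}^{1/2}_{\infty,\infty}}^2$ (with $j\ne i$) on $\lVert u_i\rVert_{L^2}^2$.

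Adding the two identities and disposing of the linear terms $\lambda\lVert\nabla u_i\rVert_{L^2}^2$ by the same interpolation, I would arrive at
\begin{align*}
\frac{d}{dt}\bigl(\lVert u_1\rVert_{L^2}^2+\lVert u_2\rVert_{L^2}^2\bigr)+\lVert\Delta u_1\rVert_{L^2}^2+\lVert\Delta u_2\rVert_{L^2}^2\le c\,g(t)\bigl(\lVert u_1\rVert_{L^2}^2+\lVert u_2\rVert_{L^2}^2\bigr),
\end{align*}
where $g(t)=1+\sum_{i=1}^{2}\bigl(\lVert u_i\rVert_{\dot{B}^{-1/2}_{\infty,\infty}}^2+\lVert u_i\rVert_{\dot{B}^{1/2}_{\infty,\infty}}^2\bigr)+\lVert u_3\rVert_{H^2}^{4/3}$ lies in $L^1(0,T)$ by the hypothesis on $u_1,u_2$ and by Proposition \ref{prop_3.6k}. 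Gr\"onwall's inequality then gives $u_1,u_2\in L^\infty_tL^2_x$, and integrating the differential inequality in time bounds $\int_0^T\bigl(\lVert\Delta u_1\rVert_{L^2}^2+\lVert\Delta u_2\rVert_{L^2}^2\bigr)\,dt$, so that $u_1,u_2\in L^2_tH^2_x$. The main obstacle, and the reason the hypothesis must impose the symmetric Besov regularity of both $u_1$ and $u_2$, is that neither horizontal equation decouples: each cross term involves the other horizontal component at a level ($L^\infty$, i.e.\ $H^2$) that is not yet available, so the sequential bootstrap of the two--dimensional argument fails. Running the two estimates simultaneously and placing the Besov norm on the advecting factor in each cross term is what allows both to be closed at once, the $u_3$--coupling being rendered harmless by Proposition \ref{prop_3.6k}.
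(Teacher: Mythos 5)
Your proof is correct and follows essentially the same route as the paper: component-wise $L^2$ energy estimates, H\"older plus Lemma \ref{lemma_3.12k} and inequality \eqref{est 500k} on the horizontal advection terms with the Besov norms landing on the advecting factor, the embedding $H^2(\mathbb{T}^3)\hookrightarrow L^\infty$ together with Proposition \ref{prop_3.6k} to tame the $u_3$ terms, and Gr\"onwall. The only cosmetic deviations are that the paper fixes $k\in\{1,2\}$ and closes each component's estimate separately rather than summing the two energies (since the hypothesis controls both $u_1$ and $u_2$, each Gr\"onwall closes on its own, so your remark that the simultaneous coupled estimate is essential is an overstatement), that the paper bounds the diagonal terms directly by the same $L^4$ Besov estimate instead of invoking their cancellation, and that your Young exponent yields $\lVert u_3\rVert_{H^2}^{4/3}$ where the paper's gives $\lVert u_3\rVert_{H^2}^{2}$ --- both integrable in time by Proposition \ref{prop_3.6k}.
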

\begin{proof}
Fix $k \in\{1,2\}$. Taking $L^{2}(\mathbb{T}^3)$-inner products on the k-th component of the  Kuramoto-Sivashinsky equation $\eqref{KSE}$ with $u_{k}$ and compute  as follows:
\begin{align}
\label{ est 2.2K}
&\frac{1}{2} \frac{d}{dt} \lVert u_{k} \rVert_{L^{2}}^{2} + \lVert \Delta u_{k} \rVert_{L^{2}}^{2} = - \int_{\mathbb{T}^3} (u\cdot\nabla) u_{k} u_{k} + \lambda \lVert \nabla u_{k} \rVert_{L^{2}}^{2} 
\nonumber\\
 =&- \int_{\mathbb{T}^3} u_{1} \partial_{1} u_{k} u_{k}- \int_{\mathbb{T}^3} u_{2} \partial_{2} u_{k} u_{k} - \int_{\mathbb{T}^3} u_{3} \partial_{3} u_{k} u_{k}+\lambda \lVert \nabla u_{k} \rVert_{L^{2}}^{2} \nonumber \\
&\leq (\lVert u_1\rVert_{L^4}+ \lVert u_2\rVert_{L^4})  \lVert \nabla u_k\rVert_{L^4} \lVert u_k\rVert_{L^2}+\lVert u_3\rVert_{L^{\infty}} \lVert \nabla u_k\rVert_{L^2} \lVert u_k\rVert_{L^2}+\lambda \lVert u_{k} \rVert_{L^{2}} \lVert \Delta u_{k} \rVert_{L^{2}}
\nonumber\\
\leq& c(\lVert u_1\rVert_{\dot{B}_{\infty, \infty}^{-\frac{1}{2}}}^{\frac{1}{2}} \lVert u_1\rVert_{\dot{B}_{\infty, \infty}^{\frac{1}{2}}}^{\frac{1}{2}}+\lVert u_{2} \rVert_{\dot{B}_{\infty, \infty}^{-\frac{1}{2}}}^{\frac{1}{2}} \lVert u_{2} \rVert_{\dot{B}_{\infty, \infty}^{\frac{1}{2}}}^{\frac{1}{2}})\lVert u_{k} \rVert_{L^2} \lVert \Lambda^{\frac{3}{2}} u_{k} \rVert_{L^2}+ c\lVert u_{k} \rVert_{L^{2}}^{2}
\nonumber\\
&+ \frac{1}{4}\lVert \Delta u_{k} \rVert_{L^{2}}^{2}+c \lVert u_3\rVert_{H^2}^{2} \lVert u_k\rVert_{L^2}^{2}+\frac{1}{4} \lVert \nabla u_k\rVert_{L^2}^{2} \nonumber\\
\leq & \frac{1}{4} \lVert \Delta u_{k} \rVert_{L^{2}}^{2}+\frac{1}{4}\lVert \Delta u_{k} \rVert_{L^{2}}^{2} 
\nonumber\\
&+ c (\lVert u_1\rVert_{\dot{B}_{\infty, \infty}^{-\frac{1}{2}}}^{2} + \lVert u_1 \rVert_{\dot{B}_{\infty, \infty}^{\frac{1}{2}}}^{2}+\lVert u_2\rVert_{\dot{B}_{\infty, \infty}^{-\frac{1}{2}}}^{2} + \lVert u_2 \rVert_{\dot{B}_{\infty, \infty}^{\frac{1}{2}}}^{2}+\lVert u_3\rVert_{H^2}^{2}+1) \lVert u_{k} \rVert_{L^{2}}^{2}, 
\end{align} 
where we applied  H$\ddot{\mathrm{o}}$lder's inequality, Sobolev embedding $H^{2}(\mathbb{T}^2) \hookrightarrow L^{\infty}(\mathbb{T}^2)$, inequality \eqref{est 500k}, Lemma \ref{lemma_3.12k}, and Young's inequality, and Proposition \ref{prop_3.6k}.
\end{proof}

The  Proposition $\ref{prop_3.6k}$ and Proposition $\ref{Thm 3.28k}$ lead to the following theorem.

\begin{theorem}
\label{3.10Y}
Let $u_0 \in H^2(\mathbb{T}^3)$. If $ u_1,u_2 \in L^2(0,T;\dot{B}_{\infty, \infty}^{-\frac{1}{2}}(\mathbb{T}^3)) \cap L^2(0,T;  L^2(0,T;\dot{B}_{\infty, \infty}^{\frac{1}{2}}(\mathbb{T}^3))$,  then the  Kuramoto-Sivashinsky equation \eqref{KSE} admits a global strong solution, i.e., $u \in L^\infty([0, T]; H^2(\mathbb{T}^3))\cap L^2([0, T]; H^4(\mathbb{T}^3))$.
\end{theorem}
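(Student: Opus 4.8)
The plan is to follow the same two-tier strategy used in the two-dimensional theorem above: first use Propositions \ref{prop_3.6k} and \ref{Thm 3.28k} to secure control of all three components at the base energy level, and then run a higher-order energy estimate followed by a bootstrap. First I would invoke Proposition \ref{Thm 3.28k} to get $u_1,u_2 \in L^\infty_t L^2_x \cap L^2_t H^2_x$ and Proposition \ref{prop_3.6k} to get $u_3 \in L^\infty_t L^2_x \cap L^2_t H^2_x$; combining these componentwise yields $u \in L^\infty([0,T];L^2(\mathbb{T}^3)) \cap L^2([0,T];H^2(\mathbb{T}^3))$, the starting point for everything that follows. In particular, since $4/3<2$, H\"older in time gives $\int_0^T \|u\|_{H^2}^{4/3}\,dt < \infty$, a fact I will need as the integrating factor at the end.

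Next I would take the $L^2(\mathbb{T}^3)$-inner product of \eqref{KSE} with $\Delta u$ componentwise and integrate by parts using periodicity to arrive at
\begin{align*}
\frac{1}{2}\frac{d}{dt}\|\nabla u\|_{L^2}^2 + \|\nabla\Delta u\|_{L^2}^2 = -\int_{\mathbb{T}^3}(u\cdot\nabla)u\cdot\Delta u\,dx + \lambda\|\Delta u\|_{L^2}^2.
\end{align*}
The decisive task is to control the nonlinear term. Using the Sobolev embedding $H^2(\mathbb{T}^3)\hookrightarrow L^\infty(\mathbb{T}^3)$ (valid since $2>3/2$), I would bound it by $\|u\|_{L^\infty}\|\nabla u\|_{L^2}\|\Delta u\|_{L^2} \lesssim \|u\|_{H^2}\|\nabla u\|_{L^2}\|\Delta u\|_{L^2}$, then interpolate $\|\Delta u\|_{L^2}\le \|\nabla u\|_{L^2}^{1/2}\|\nabla\Delta u\|_{L^2}^{1/2}$ so that, after Young's inequality, one factor of $\|\nabla\Delta u\|_{L^2}^{1/2}$ is absorbed into the dissipation and the remainder takes the form $C\,\|u\|_{H^2}^{4/3}\|\nabla u\|_{L^2}^2$. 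The linear term is treated the same way via $\|\Delta u\|_{L^2}^2 \le \|\nabla u\|_{L^2}\|\nabla\Delta u\|_{L^2}$ and Young. This produces a differential inequality of the shape $\frac{d}{dt}\|\nabla u\|_{L^2}^2 + \|\nabla\Delta u\|_{L^2}^2 \le C\,(\|u\|_{H^2}^{4/3}+1)\,\|\nabla u\|_{L^2}^2$, whose integrating factor is finite by the previous paragraph. Gr\"onwall's inequality then delivers $u \in L^\infty([0,T];H^1(\mathbb{T}^3)) \cap L^2([0,T];H^3(\mathbb{T}^3))$.

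Finally I would bootstrap: repeating the energy estimate with $\Delta^2 u$ in place of $\Delta u$, now with the $H^3$ bound just obtained playing the role previously played by the $H^2$ bound, upgrades the regularity to $u \in L^\infty([0,T];H^2(\mathbb{T}^3)) \cap L^2([0,T];H^4(\mathbb{T}^3))$, which is exactly the asserted global strong solution. The main obstacle, as throughout this section, is the nonlinear term $\int_{\mathbb{T}^3}(u\cdot\nabla)u\cdot\Delta u\,dx$: because there is no divergence-free structure in the Kuramoto-Sivashinsky equation, no cancellation is available and the estimate must rely purely on the $L^2_t H^2_x$ integrability of $u$ from the first step, combined with interpolation, to keep the top-order contribution absorbable into $\|\nabla\Delta u\|_{L^2}^2$. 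This is precisely the information that the hypotheses on $u_1,u_2$, channeled through Propositions \ref{prop_3.6k} and \ref{Thm 3.28k}, are designed to supply.
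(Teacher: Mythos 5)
Your proposal is correct and follows essentially the same route as the paper: invoke Propositions \ref{prop_3.6k} and \ref{Thm 3.28k} to obtain $u \in L^\infty_t L^2_x \cap L^2_t H^2_x$, test the equation with $\Delta u$, close via Gr\"onwall, and bootstrap to $L^\infty_t H^2 \cap L^2_t H^4$. In fact your write-up is more explicit than the paper's, which merely expands the nonlinearity componentwise before citing the propositions and Gr\"onwall, whereas you supply the missing details --- the bound $\|u\|_{L^\infty}\|\nabla u\|_{L^2}\|\Delta u\|_{L^2}$, the interpolation $\|\Delta u\|_{L^2}\le\|\nabla u\|_{L^2}^{1/2}\|\nabla\Delta u\|_{L^2}^{1/2}$ (Lemma \ref{est 500k}), and the $L^1_t$ integrability of the Gr\"onwall factor $\|u\|_{H^2}^{4/3}$ --- all of which check out.
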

\begin{proof}
Taking $L^2 (\mathbb{T}^3)$-inner products on the  Kuramoto-Sivashinsky equation \eqref{KSE}  with $\Delta u$
\begin{align}
\frac{1}{2} \frac{d}{dt} \lVert& \nabla u \rVert_{L^2}^2+   \lVert \nabla \Delta u \rVert_{L^2}^2= - \int_{\mathbb{T}^3} (u\cdot\nabla) u \cdot \Delta u dx+\lambda \lVert \Delta u \rVert_{L^2}^{2}
% \nonumber\\
% &=\lambda \lVert \Delta u \rVert_{L^2}^{2} - \sum_{k=1}^{3} \int \begin{pmatrix}
% u_1 \partial_1 u_1 +u_2 \partial_2 u_1+ u_3 \partial_3 u_1\\
% u_1 \partial_1 u_2 +u_2 \partial_2 u_2+ u_3 \partial_3 u_2\\
% u_1 \partial_1 u_3 +u_2 \partial_2 u_3+ u_3 \partial_3 u_3
% \end{pmatrix} \cdot \begin{pmatrix}
% \partial_k^2 u_1\\
% \partial_k^2 u_2\\
% \partial_k^2 u_3
% \end{pmatrix}
% dx
\nonumber\\
&= \lambda \lVert \Delta u \rVert_{L^2}^{2} - \sum_{k=1}^{3} \int_{\mathbb{T}^3}
(u_1 \partial_1 u_1 \partial_k^2 u_1 +u_2 \partial_2 u_1 \partial_k^2 u_1+ u_3 \partial_3 u_1\partial_k^2 u_1+
\nonumber\\
&+u_1 \partial_1 u_2 \partial_k^2 u_2 +u_2 \partial_2 u_2\partial_k^2 u_2+ u_3 \partial_3 u_2\partial_k^2 u_2
+u_1 \partial_1 u_3 \partial_k^2 u_3 
\nonumber\\
&+u_2 \partial_2 u_3\partial_k^2 u_3+ u_3 \partial_3 u_3 \partial_k^2 u_3 ) dx,
\end{align}
after  applying  Proposition \ref{prop_3.6k} and Proposition \ref{Thm 3.28k}, Gr\"onwall's inequality implies $u \in L^\infty([0, T]; H^1(\mathbb{T}^3))\cap L^2([0, T]; H^3(\mathbb{T}^3))$. This completes the proof of the Theorem by bootstrapping for the higher regularity.
\end{proof}
\begin{remark}
It is worth noting that Theorem~\ref{3.10Y} is particularly interesting, as no cancellation in the nonlinear term occurs due to the lack of the divergence-free condition in the  Kuramoto-Sivashinsky equation \eqref{KSE}, in contrast to the Navier-Stokes equations; however, we establish estimates involving the velocity components $u_1$ and $u_2$.
s.
\end{remark}

\begin{theorem}
\label{3.12H}
Let $ u_0 \in H^2(\mathbb{T}^3)$. If $\nabla u \in L^2(0,T,\dot{B}_{\infty, \infty}^{-1}(\mathbb{T}^3)) \cap L^2(0,T; L^2(\mathbb{T}^3))$, then the  Kuramoto-Sivashinsky equation \eqref{KSE} admits a global strong solution, i.e., $u \in L^\infty([0, T]; H^2(\mathbb{T}^3))\cap L^2([0, T]; H^4(\mathbb{T}^3))$.
\end{theorem}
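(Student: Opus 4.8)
The plan is to follow the same energy-plus-bootstrap scheme used in Theorems \ref{Th 3.2}--\ref{3.10Y}, the only new ingredient being the way the nonlinear term is absorbed using the control of $\nabla u$ in $\dot{B}^{-1}_{\infty,\infty}$. The starting observation is that, in the absence of a divergence-free condition, testing \eqref{KSE} against $u$ produces a nonlinear contribution that can be rewritten in divergence form,
$$-\int_{\mathbb{T}^3}(u\cdot\nabla)u\cdot u\,dx=\frac12\int_{\mathbb{T}^3}(\nabla\cdot u)\,|u|^2\,dx ,$$
so that the single factor $\nabla\cdot u$ carries all the derivatives and is a component of $\nabla u$. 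This is precisely the quantity the hypothesis controls, which is what makes $\dot{B}^{-1}_{\infty,\infty}$ the natural space here.

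First I would carry out the basic $L^2$ energy estimate: the $L^2(\mathbb{T}^3)$-inner product of \eqref{KSE} with $u$ gives
$$\frac12\frac{d}{dt}\|u\|_{L^2}^2+\|\Delta u\|_{L^2}^2=\frac12\int_{\mathbb{T}^3}(\nabla\cdot u)|u|^2\,dx+\lambda\|\nabla u\|_{L^2}^2 .$$
For the nonlinear term I would use the duality $\dot{B}^{-1}_{\infty,\infty}=(\dot{B}^{1}_{1,1})^{*}$ together with a paraproduct estimate for the square, to the effect that
$$\Bigl|\int_{\mathbb{T}^3}(\nabla\cdot u)|u|^2\,dx\Bigr|\le \|\nabla u\|_{\dot{B}^{-1}_{\infty,\infty}}\,\bigl\||u|^2\bigr\|_{\dot{B}^{1}_{1,1}}\lesssim \|\nabla u\|_{\dot{B}^{-1}_{\infty,\infty}}\|u\|_{L^2}\|\nabla u\|_{L^2}.$$
Applying Young's inequality directly to the product $\|\nabla u\|_{\dot{B}^{-1}_{\infty,\infty}}\cdot\|\nabla u\|_{L^2}$ bounds this by $\tfrac12\|\nabla u\|_{\dot{B}^{-1}_{\infty,\infty}}^{2}\|u\|_{L^2}^2+\tfrac12\|\nabla u\|_{L^2}^2$, and the surviving $\|\nabla u\|_{L^2}^2$ (together with the anti-diffusive term $\lambda\|\nabla u\|_{L^2}^2$) is absorbed into $\tfrac12\|\Delta u\|_{L^2}^2+c\|u\|_{L^2}^2$ by the interpolation $\|\nabla u\|_{L^2}^2\le\|u\|_{L^2}\|\Delta u\|_{L^2}$. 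Since the hypothesis gives $\int_0^T\|\nabla u\|_{\dot{B}^{-1}_{\infty,\infty}}^{2}\,dt<\infty$, Gr\"onwall's inequality then yields $u\in L^\infty([0,T];L^2)\cap L^2([0,T];\dot{H}^2)$; the companion assumption $\nabla u\in L^2([0,T];L^2)$ guarantees that the factor $\|\nabla u\|_{L^2}$ produced by the product estimate is finite for a.e.\ $t$ and absorbs the lower-order remainders.

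Next I would bootstrap. Testing \eqref{KSE} against $\Delta u$ gives $\tfrac12\frac{d}{dt}\|\nabla u\|_{L^2}^2+\|\nabla\Delta u\|_{L^2}^2=\lambda\|\Delta u\|_{L^2}^2-\int_{\mathbb{T}^3}(u\cdot\nabla)u\cdot\Delta u\,dx$, and now the nonlinear term is estimated by $\|u\|_{L^\infty}\|\nabla u\|_{L^2}\|\Delta u\|_{L^2}$, whose three factors I would interpolate against $\|u\|_{\dot{H}^3}=\|\nabla\Delta u\|_{L^2}$ by Gagliardo--Nirenberg ($\|u\|_{L^\infty}\lesssim\|u\|_{\dot{H}^3}^{1/2}\|u\|_{L^2}^{1/2}$, $\|\nabla u\|_{L^2}\lesssim\|u\|_{\dot{H}^3}^{1/3}\|u\|_{L^2}^{2/3}$, $\|\Delta u\|_{L^2}\lesssim\|u\|_{\dot{H}^3}^{2/3}\|u\|_{L^2}^{1/3}$) to reach the bound $\lesssim\|u\|_{\dot{H}^3}^{3/2}\|u\|_{L^2}^{3/2}$. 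Young's inequality absorbs a fraction of $\|\nabla\Delta u\|_{L^2}^2$ and leaves $c\,\|u\|_{L^2}^6$, which is time-integrable because $\|u\|_{L^2}$ is now bounded; the term $\lambda\|\Delta u\|_{L^2}^2$ is handled the same way. This produces $u\in L^\infty([0,T];\dot{H}^1)\cap L^2([0,T];\dot{H}^3)$, and repeating the procedure once more with $\Delta^2 u$ (exactly as in the final bootstrapping steps of Theorems \ref{Th 3.2} and \ref{3.10Y}) climbs to the desired $u\in L^\infty([0,T];H^2(\mathbb{T}^3))\cap L^2([0,T];H^4(\mathbb{T}^3))$.

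I expect the main obstacle to be the first step: controlling $\int(\nabla\cdot u)|u|^2$ purely through $\|\nabla u\|_{\dot{B}^{-1}_{\infty,\infty}}$. Because there is no divergence-free cancellation, this divergence term genuinely survives, and one must pay for it with the square $\bigl\||u|^2\bigr\|_{\dot{B}^{1}_{1,1}}$; obtaining the clean bound $\bigl\||u|^2\bigr\|_{\dot{B}^{1}_{1,1}}\lesssim\|u\|_{L^2}\|\nabla u\|_{L^2}$ (or a variant with a mild Besov loss that is still absorbable) via Bony's paraproduct decomposition and Lemma \ref{lemma_3.12k} is the delicate point on which the whole criterion hinges. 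Once this product estimate is in hand, the remaining interpolation, Young, and Gr\"onwall steps are routine and parallel to the earlier proofs.
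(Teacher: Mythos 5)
Your overall architecture (energy estimate at the $L^2$ level, then subcritical bootstrap) is sound, and your Step 2 is correct as written --- indeed, once $u\in L^\infty_tL^2_x\cap L^2_t\dot H^2_x$ is known, your Gagliardo--Nirenberg bound $\|u\|_{L^\infty}\|\nabla u\|_{L^2}\|\Delta u\|_{L^2}\lesssim \|u\|_{\dot H^3}^{3/2}\|u\|_{L^2}^{3/2}$ closes the $H^1$ estimate without invoking the Besov hypothesis again, which is even a bit cleaner than what is needed. The genuine gap is exactly where you suspected it: the product estimate $\bigl\||u|^2\bigr\|_{\dot{B}^{1}_{1,1}}\lesssim \|u\|_{L^2}\|\nabla u\|_{L^2}$ is \emph{false} as stated. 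In Bony's decomposition $|u|^2=\sum_i\bigl(T_{u_i}u_i+T_{u_i}u_i+R(u_i,u_i)\bigr)$, the remainder is indeed controlled by $\|u\|_{L^2}\|u\|_{\dot H^1}$, but each paraproduct piece only obeys $\|T_{u}u\|_{\dot{B}^{1}_{1,1}}\lesssim \|u\|_{L^2}\|u\|_{\dot{B}^{1}_{2,1}}$, and $\dot H^1=\dot{B}^{1}_{2,2}\not\hookrightarrow \dot{B}^{1}_{2,1}$: the $\ell^1$ dyadic summation forced by the target index $q=1$ (which you cannot avoid, since the duality $(\dot{B}^{1}_{1,1})^{*}=\dot{B}^{-1}_{\infty,\infty}$ is what you are pairing against) is strictly stronger than the $\ell^2$ summation that $\|\nabla u\|_{L^2}$ provides; such endpoint product estimates fail logarithmically. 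The estimate can be repaired along the lines you hint at: $\|u\|_{\dot{B}^{1}_{2,1}}\lesssim\|u\|_{\dot H^{1/2}}^{1/2}\|u\|_{\dot H^{3/2}}^{1/2}$, whence $\bigl\||u|^2\bigr\|_{\dot{B}^{1}_{1,1}}\lesssim\|u\|_{L^2}^{3/2}\|u\|_{\dot H^2}^{1/2}$ by interpolation, and Young with exponents $(4/3,4)$ leaves the Gr\"onwall coefficient $\|\nabla u\|_{\dot{B}^{-1}_{\infty,\infty}}^{4/3}$, which lies in $L^1(0,T)$ by H\"older since the hypothesis gives $L^{3/2}(0,T)$ for this quantity. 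But as proposed, the crux inequality on which your Step 1 rests does not hold.

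It is worth noting that the paper sidesteps paraproducts and duality entirely, and also deploys the two hypotheses differently from you. The paper's $H^1$ estimate tests \eqref{KSE} with $\Delta u$ and integrates by parts to reduce the nonlinearity to $\|\nabla u\|_{L^3}^3$, which is then handled by the interpolation Lemma \ref{lemma_3.12k} applied directly to $g=\nabla u$ (with $r=3$, $s=2$, $\beta=1$): $\|\nabla u\|_{L^3}\le c\,\|\nabla u\|_{\dot{B}^{-1}_{\infty,\infty}}^{1/3}\|\nabla u\|_{\dot H^{1/2}}^{2/3}$, followed by $\|\nabla u\|_{\dot H^{1/2}}^{2}\le\|\nabla u\|_{L^2}\|\Delta u\|_{L^2}$ and Young, so that Gr\"onwall runs with the coefficient $\|\nabla u\|_{\dot{B}^{-1}_{\infty,\infty}}^{2}\in L^1(0,T)$ --- this is elementary once Lemma \ref{lemma_3.12k} is available and avoids the $\ell^1$-summation pitfall altogether. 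Meanwhile the companion $L^2$ estimate in the paper bounds $\int(u\cdot\nabla)u\cdot u$ by $\|u\|_{L^6}\|\nabla u\|_{L^3}\|u\|_{L^2}$ and closes Gr\"onwall with the coefficient $\|\nabla u\|_{L^2}^2$; in other words, the second hypothesis $\nabla u\in L^2(0,T;L^2)$ is used substantively as a Gr\"onwall integrand at the $L^2$ level, not merely (as in your sketch) to guarantee a.e.-finiteness of a factor. So your route is salvageable and genuinely different in its key mechanism, but in its present form the divergence-form-plus-duality step fails, whereas the paper's $L^3$-interpolation argument is the one that actually closes.
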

\begin{proof} 
Taking $L^2 (\mathbb{T}^3)$-inner products on the  Kuramoto-Sivashinsky equation \eqref{KSE}  with $\Delta u$
\begin{align}
\label{678K}
  \frac{1}{2} \frac{d}{dt} \lVert \nabla u \rVert_{L^2}^2+   \lVert \nabla \Delta u \rVert_{L^2}^2=& - \int_{\mathbb{T}^3} (u\cdot\nabla) u \cdot \Delta u dx+\lambda \lVert \Delta u \rVert_{L^2}^{2}
  \nonumber\\
  & \leq  \lVert \nabla u\rVert_{L^3}^{3}+ \lambda \lVert \Delta u\rVert_{L^2}^2
  \nonumber\\
    & \leq c(\lVert \nabla u \rVert_{\dot{B}_{\infty, \infty}^{-1}}^{\frac{1}{3}} \lVert \nabla u \rVert_{\dot{H}^{\frac{1}{2}}}^{\frac{2}{3}})^3+ \frac{1}{4}\lVert \nabla \Delta u\rVert_{L^2}^{2}+c\lVert \nabla u\rVert_{L^2}^2
    \nonumber\\
    & = c\lVert \nabla u \rVert_{\dot{B}_{\infty, \infty}^{-1}} \lVert \nabla u \rVert_{\dot{H}^{\frac{1}{2}}}^{{2}}+ \frac{1}{4}\lVert \nabla \Delta u\rVert_{L^2}^{2}+c\lVert \nabla u\rVert_{L^2}^2
    \nonumber\\
    &  \leq c \lVert \nabla u\rVert_{L^2} \lVert \nabla u \rVert_{\dot{B}_{\infty, \infty}^{-1}} \lVert \Delta u \rVert_{L^2}+ \frac{1}{4}\lVert \nabla \Delta u\rVert_{L^2}^{2}+c\lVert \nabla u\rVert_{L^2}^2
    \nonumber\\
    & \leq \frac{1}{8}\lVert \Delta u \rVert_{L^2}^{2}+c \lVert \nabla u \rVert_{\dot{B}_{\infty, \infty}^{-1}}^{2 } \lVert \nabla u\rVert_{L^2}^{2}+ \frac{1}{4}\lVert \nabla \Delta u\rVert_{L^2}^{2}+c\lVert \nabla u\rVert_{L^2}^2,
    \end{align}
where we applied integration by parts, H\"older's inequality, Lemma \ref{lemma_3.12k}, Young's inequality,  Gagliardo-Nirenberg interpolation inequality.

We also need an $L^2 (\mathbb{T}^3)$ estimate, thus taking $L^2 (\mathbb{T}^3)$-inner products on the  Kuramoto-Sivashinsky equation \eqref{KSE}  with $u$
    \begin{align}
    \label{K190H}
    \frac{1}{2} \frac{d}{dt} \lVert u \rVert_{L^{2}}^{2}+  \lVert \Delta u \rVert_{L^{2}}^{2} &=  - \int_{\mathbb{T}^3} (u\cdot\nabla) u \cdot u dx+\lambda \lVert \nabla u \rVert_{L^{2}}^{2}
    \nonumber\\
    & \leq  \lVert u \rVert_{L^6}\lVert \nabla u\rVert_{L^3}\lVert  u\rVert_{L^2}+c\lVert  u\rVert_{L^2}^2+ \frac{1}{4}\lVert \Delta u\rVert_{L^2}^{2}
    \nonumber\\
    & \leq c \lVert \nabla u \rVert_{L^2}\lVert \nabla \Lambda^{\frac{1}{2}}u\rVert_{L^2}\lVert  u\rVert_{L^2}+c\lVert  u\rVert_{L^2}^2+ \frac{1}{4}\lVert \Delta u\rVert_{L^2}^{2}
    \nonumber\\
    & \leq c \lVert \nabla u \rVert_{L^2}^{2}\lVert  u\rVert_{L^2}^{2}+ \frac{1}{4}\lVert \nabla \Lambda^{\frac{1}{2}}u\rVert_{L^2}^{2}+ c\lVert  u\rVert_{L^2}^2+\frac{1}{4}\lVert \Delta u\rVert_{L^2}^{2},
\end{align}
where we applied H\"older's inequality, Sobolev embedding and Young's inequality.  Applying \eqref{678K} and \eqref{K190H}, consequently, Gr\"onwall's inequality implies $u \in L^\infty([0, T]; H^1(\mathbb{T}^3))\cap L^2([0, T]; H^3(\mathbb{T}^3))$. This completes the proof of the Theorem by bootstrapping for the higher regularity.
\end{proof}
\begin{theorem}\label{Theorem 3.10kM}
Let $u_0 \in H^2(\mathbb{T}^2)$. If $\nabla \cdot  u \in L^2(0,T,\dot{B}_{2,2 }^{0}(\mathbb{T}^2))$, 
then the  Kuramoto-Sivashinsky equation \eqref{KSE} admits a global strong solution, i.e., $u \in L^\infty([0, T]; H^2(\mathbb{T}^2))\cap L^2([0, T]; H^4(\mathbb{T}^2))$.
\end{theorem}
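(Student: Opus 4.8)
The plan is to run the same two-tier energy argument used throughout Section~\ref{3.1KH}: first secure the base regularity $u\in L^\infty(0,T;L^2(\mathbb T^2))\cap L^2(0,T;H^2(\mathbb T^2))$, and then bootstrap to $L^\infty(0,T;H^2)\cap L^2(0,T;H^4)$ exactly as in the proof of Theorem~\ref{Th 3.2}. The hypothesis enters only at the very first step, so that is where I would focus. I would begin by noting that $\dot B^0_{2,2}(\mathbb T^2)=L^2(\mathbb T^2)$ on mean-zero fields (and $\nabla\cdot u$ automatically has zero mean on $\mathbb T^2$), so the assumption is simply $\nabla\cdot u\in L^2(0,T;L^2(\mathbb T^2))$.

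For the $L^2$ estimate I would take the $L^2(\mathbb T^2)$-inner product of \eqref{KSE} with $u$. The decisive point is that the nonlinear term does not cancel, but integration by parts rewrites it through the divergence:
\begin{align*}
\int_{\mathbb T^2}(u\cdot\nabla)u\cdot u\,dx=\tfrac12\int_{\mathbb T^2}u\cdot\nabla|u|^2\,dx=-\tfrac12\int_{\mathbb T^2}(\nabla\cdot u)\,|u|^2\,dx,
\end{align*}
which is exactly the quantity the hypothesis is built to control. Using H\"older's inequality, the 2D Ladyzhenskaya inequality $\|u\|_{L^4}^2\le c\|u\|_{L^2}\|\nabla u\|_{L^2}$, the interpolation $\|\nabla u\|_{L^2}\le c\|u\|_{L^2}^{1/2}\|\Delta u\|_{L^2}^{1/2}$, and Young's inequality, I would estimate
\begin{align*}
\Bigl|\tfrac12\int_{\mathbb T^2}(\nabla\cdot u)\,|u|^2\,dx\Bigr|\le c\|\nabla\cdot u\|_{L^2}\|u\|_{L^2}^{3/2}\|\Delta u\|_{L^2}^{1/2}\le\tfrac14\|\Delta u\|_{L^2}^2+c\|\nabla\cdot u\|_{L^2}^{4/3}\|u\|_{L^2}^2,
\end{align*}
and absorb $\lambda\|\nabla u\|_{L^2}^2\le\tfrac14\|\Delta u\|_{L^2}^2+c\|u\|_{L^2}^2$ in the same way, arriving at
\begin{align*}
\frac{d}{dt}\|u\|_{L^2}^2+\|\Delta u\|_{L^2}^2\le c\bigl(\|\nabla\cdot u\|_{L^2}^{4/3}+1\bigr)\|u\|_{L^2}^2.
\end{align*}
Since $\nabla\cdot u\in L^2(0,T;L^2)$, the weight obeys $\|\nabla\cdot u\|_{L^2}^{4/3}\in L^{3/2}(0,T)\hookrightarrow L^1(0,T)$ on the bounded interval, so Gr\"onwall's inequality gives $u\in L^\infty(0,T;L^2)$ and, after integrating in time, $u\in L^2(0,T;H^2)$.

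With the base regularity established, I would upgrade it by the standard bootstrap that does not see the divergence. Taking the $L^2$-inner product of \eqref{KSE} with $\Delta u$, the nonlinear contribution $\int(u\cdot\nabla)u\cdot\Delta u$ is controlled by $\|u\|_{L^\infty}\|\nabla u\|_{L^2}\|\Delta u\|_{L^2}$ via the embedding $\dot H^2(\mathbb T^2)\hookrightarrow L^\infty(\mathbb T^2)$, Young's and Gagliardo--Nirenberg inequalities, producing a Gr\"onwall inequality for $\|\nabla u\|_{L^2}^2$ whose time weight involves the already-controlled quantity $\|\Delta u\|_{L^2}^2\in L^1(0,T)$. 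This yields $u\in L^\infty(0,T;H^1)\cap L^2(0,T;H^3)$, and iterating once more at the next order gives $u\in L^\infty(0,T;H^2)\cap L^2(0,T;H^4)$, exactly as in Theorem~\ref{Th 3.2}.

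The main obstacle is confined to the $L^2$ step: without a divergence-free condition the usual cancellation of $\int(u\cdot\nabla)u\cdot u$ fails, and the whole argument hinges on converting this obstruction into $\int(\nabla\cdot u)|u|^2$, which the hypothesis controls. The delicate part is the distribution of factors, so that $\|\Delta u\|_{L^2}$ appears only to the power $1/2$ and can be absorbed on the left, leaving the time weight $\|\nabla\cdot u\|_{L^2}^{4/3}$ in $L^1(0,T)$; a cruder splitting would leave a borderline or super-critical power of $\nabla\cdot u$ that the $L^2$-in-time hypothesis could not absorb.
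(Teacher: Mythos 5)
Your proposal is correct and follows essentially the same route as the paper's own proof: integrate the nonlinear term by parts to produce $\tfrac12\int_{\mathbb{T}^2}(\nabla\cdot u)\,|u|^2\,dx$, control it via $\|u\|_{L^4}^2\le c\|u\|_{L^2}\|\nabla u\|_{L^2}$ with the hypothesis $\nabla\cdot u\in L^2(0,T;\dot B^0_{2,2})$ (which the paper likewise treats as an $L^2$-in-time weight), close with Young and Gr\"onwall, and then bootstrap through the $\Delta u$ estimate using $H^2(\mathbb{T}^2)\hookrightarrow L^\infty(\mathbb{T}^2)$. The only cosmetic difference is your Young splitting, which yields the time weight $\|\nabla\cdot u\|_{L^2}^{4/3}\in L^{3/2}(0,T)$ rather than the paper's $\|\nabla\cdot u\|_{\dot B^0_{2,2}}^{2}\in L^1(0,T)$; both are integrable on $[0,T]$, so the Gr\"onwall step goes through identically.
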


\begin{proof}
We take $L^2 (\mathbb{T}^2)$-inner products on the  Kuramoto-Sivashinsky equation \eqref{KSE}  with $u$
    \begin{align}
    \label{est 3.17k}
    \frac{1}{2} \frac{d}{dt} \lVert u \rVert_{L^{2}}^{2}+  \lVert \Delta u \rVert_{L^{2}}^{2} &=  - \int_{\mathbb{T}^2} (u\cdot\nabla) u \cdot u dx+ \lambda \lVert \nabla u \rVert_{L^{2}}^{2}
    \nonumber\\
    & \leq  \lVert  u\rVert_{L^4}^2 \lVert \nabla \cdot u \rVert_{L^2}+c\lVert  u\rVert_{L^2}^2+ \frac{1}{4}\lVert \Delta u\rVert_{L^2}^{2}
    \nonumber\\
    & \leq c  \lVert \nabla \cdot  u \rVert_{\dot{B}_{2,2}^{0}}\lVert  u\rVert_{L^2}\lVert  \nabla u\rVert_{L^2}+ c\lVert  u\rVert_{L^2}^2+\frac{1}{4}\lVert \Delta u\rVert_{L^2}^{2}
    \nonumber\\
    & \leq c \lVert \nabla \cdot  u \rVert_{\dot{B}_{2,2}^{0}}^{2}\lVert  u\rVert_{L^2}^{2}+ \frac{1}{4}\lVert \nabla u\rVert_{L^2}^{2}+c\lVert  u\rVert_{L^2}^2+ \frac{1}{4}\lVert \Delta u\rVert_{L^2}^{2},
\end{align}
where we applied integration by parts, H\"older inequality, Lemma \ref{lemma_3.12k}, Young's inequality,  and Gr\"onwall's inequality to deduce $u \in L_t^{\infty} L_x^2 \cap L_t^{2}H_x^2$.

Taking $L^2 (\mathbb{T}^2)$-inner products on the  Kuramoto-Sivashinsky equation \eqref{KSE}  with $\Delta u$
\begin{align}
  \frac{1}{2} \frac{d}{dt} \lVert \nabla u \rVert_{L^2}^2+   \lVert \nabla \Delta u \rVert_{L^2}^2=& - \int_{\mathbb{T}^2} (u\cdot\nabla) u \cdot \Delta u dx+\lambda \lVert \Delta u \rVert_{L^2}^{2}
    \nonumber\\
    &\leq   \lVert \Delta u\rVert_{L^2} \lVert u\rVert_{L^{\infty}}  \lVert \nabla u\rVert_{L^2}+c\lVert \nabla u\rVert_{L^2}^2+ \frac{1}{4}\lVert \nabla \Delta u\rVert_{L^2}^{2}
    \nonumber\\
    & \leq \frac{1}{8}\lVert \Delta u \rVert_{L^2}^{2}+c \lVert \Delta u \rVert_{L^2}^{2 } \lVert \nabla u\rVert_{L^2}^{2}+c\lVert \nabla u\rVert_{L^2}^2+ \frac{1}{4}\lVert \nabla \Delta u\rVert_{L^2}^{2},
    \end{align}
where we applied  H\"older's inequality, Young's inequality, and, $u \in L_t^{\infty} L_x^2 \cap L_t^{2}H_x^2$ from \eqref{est 3.17k}, Gagliardo-Nirenberg interpolation inequality. Consequently, Gr\"onwall's inequality implies $u \in L^\infty([0, T]; H^1(\mathbb{T}^2))\cap L^2([0, T]; H^3(\mathbb{T}^2))$. This completes the proof of the Theorem by bootstrapping for the higher regularity.
\end{proof}
After rigorously establishing the regularity results, we now turn our attention to the error analysis for the Kuramoto-Sivashinsky equation \eqref{KSE} in the following section
\subsection{Error Analysis}
\label{3.2}
In this section,  we obtain PINNs approximations of the solutions to the Kuramoto-Sivashinsky equation \eqref{KSE} establishing the upper bound on the PINNs residual. Combining Corollary \ref{C3.2} and  Theorem \ref{2.10CT} located in the Appendix \ref{Appen} yield the following approximation result for the  Kuramoto-Sivashinsky equation \eqref{KSE}. 
\begin{theorem}
\label{3.11K}
Let $n \ge 2$, $d,m,k \in \mathbb{N}$ with $k \ge 5$, and let $u_0 \in H^m(\mathbb{T}^d)$ with
$m > \frac{d}{2} + 4k$. It holds that:
\begin{itemize}
\item there exists $T>0$ and a classical solution $u$ to the  Kuramoto-Sivashinsky equation \eqref{KSE}
such that $u \in H^k(\Omega)$, $\Omega = \mathbb{T}^d \times [0,T]$, and $u(t=0)=u_0$,
\item for every $N>6$, there exists  a tanh neural network $\overline u_j$, $1\le j\le d$,  with two hidden layers, of widths $ 
3\Big\lceil \tfrac{k+n-2}{2}\Big\rceil \binom{d+k-1}{d} + \lceil TN\rceil + dN$ and 
$3\Big\lceil \tfrac{d+n}{2}\Big\rceil \binom{2d+1}{d}\,\lceil TN\rceil\,N^d$
such that for every $1\le j\le d$,
\begin{subequations}
\begin{align}
\label{3.13A}
&\big\lVert (\overline u_j)_t + \overline u\cdot\nabla \overline u_j +\lambda \Delta \overline u_j+ \Delta^2 \overline u_j\big\rVert_{L^2(\Omega)}
\le C_1 \ln^4(\beta N)\,N^{-(k-4)},\\
&\|(u_0)_j-\overline u_j(t=0)\|_{L^2(\mathbb{T}^d)}\le C_2 \ln(\beta N)\,N^{-(k-1)}\label {3.13B}, 
\end{align}
\end{subequations}
where the constants $\beta, C_1, C_2$ are explicitly defined in the proof and can
depend on $k,d,T,u,$ but not on $N$. The weights of the networks can
be bounded by $O(N^{\gamma}\ln(N))$ where $\gamma=\max\{1,\, d(2 + k^2 + d)/n\}$.
\end{itemize}
\end{theorem}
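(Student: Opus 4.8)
The plan is to combine the interior regularity from Corollary~\ref{C3.2} with the tanh-network approximation result, Theorem~\ref{2.10CT}, and then convert the resulting Sobolev approximation rates into bounds on the residuals in~\eqref{2.5PL}. First I would invoke Corollary~\ref{C3.2}: since $u_0\in H^m(\mathbb{T}^d)$ with $m>\tfrac{d}{2}+4k$, there exist $T>0$ and a classical solution $u$ with $u\in H^k(\Omega)$, $\Omega=\mathbb{T}^d\times[0,T]$, and $u(t=0)=u_0$, which is exactly the first bullet. With this regularity in hand I would apply Theorem~\ref{2.10CT} to each component $u_j$, $1\le j\le d$, producing a two-hidden-layer tanh network $\overline u_j$ of the stated widths and with weights of size $O(N^{\gamma}\ln N)$, together with the family of estimates
\begin{equation*}
\lVert u_j-\overline u_j\rVert_{H^\ell(\Omega)}\le C\,\ln^\ell(\beta N)\,N^{-(k-\ell)},\qquad 0\le\ell\le 4,
\end{equation*}
which are meaningful because $k\ge 5$ forces every exponent $k-\ell$ to be strictly positive.

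Next I would bound the PDE residual. Writing $e_j:=\overline u_j-u_j$ and using that $u$ solves~\eqref{KSE} exactly, the residual $\mathcal{R}_{\mathrm{PDE}}[\overline u]_j$ equals
\begin{equation*}
(e_j)_t+\lambda\Delta e_j+\Delta^2 e_j+\big(\overline u\cdot\nabla\overline u_j-u\cdot\nabla u_j\big).
\end{equation*}
The three linear contributions are controlled directly by the $H^1$, $H^2$, and $H^4$ errors above, with the biharmonic term $\Delta^2 e_j$ dominant and responsible for the factor $\ln^4(\beta N)\,N^{-(k-4)}$ in~\eqref{3.13A}. For the quadratic term I would split
\begin{equation*}
\overline u\cdot\nabla\overline u_j-u\cdot\nabla u_j=(\overline u-u)\cdot\nabla\overline u_j+u\cdot\nabla e_j,
\end{equation*}
and estimate each piece in $L^2(\Omega)$ by Hölder's inequality together with the Sobolev embedding $H^s(\Omega)\hookrightarrow L^\infty(\Omega)$, valid for $s>\tfrac{d+1}{2}$ and hence for $s=3$ since $d\le 3$. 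This places $\lVert\overline u-u\rVert_{L^\infty}$ at rate $N^{-(k-3)}$ and $\lVert\nabla e_j\rVert_{L^2}$ at rate $N^{-(k-1)}$ (up to logarithms), while $\nabla\overline u_j$ and $u$ stay uniformly bounded; both nonlinear rates decay strictly faster than $N^{-(k-4)}$ and are absorbed into~\eqref{3.13A}.

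For the initial-condition residual, note that $\mathcal{R}_t[\overline u]_j=\overline u_j(t=0)-(u_0)_j=e_j(\cdot,0)$ because $u(t=0)=u_0$. Applying the trace inequality $\lVert e_j(\cdot,0)\rVert_{L^2(\mathbb{T}^d)}\le C\lVert e_j\rVert_{H^1(\Omega)}$ together with the $\ell=1$ estimate yields the bound $C_2\ln(\beta N)\,N^{-(k-1)}$ of~\eqref{3.13B}, carrying a single logarithm. The architecture, the widths, and the weight bound $O(N^{\gamma}\ln N)$ with $\gamma=\max\{1,\,d(2+k^2+d)/n\}$ are inherited verbatim from Theorem~\ref{2.10CT}.

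The main obstacle is the quadratic nonlinearity $\overline u\cdot\nabla\overline u_j$: unlike the divergence-free setting, there is no structural cancellation, so one must rely on the crude product splitting above. The point that makes this succeed is that we only need an upper bound on the residual rather than an energy identity, so it suffices to put one factor in $L^\infty$ via Sobolev embedding and the other in $L^2$; because $k\ge 5$, the regularity budget is large enough that both nonlinear pieces decay faster than the biharmonic term, so the overall rate is governed by $\Delta^2 e_j$ and the announced $\ln^4(\beta N)\,N^{-(k-4)}$ results. A secondary bookkeeping point is tracking the powers of $\ln(\beta N)$, since each order of differentiation in the tanh approximation contributes one logarithm; the fourth-order term produces the $\ln^4$, and all lower-order contributions are dominated by it.
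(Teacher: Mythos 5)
Your proposal is correct and follows essentially the same route as the paper: Corollary~\ref{C3.2} for $u\in H^k(\Omega)$, Theorem~\ref{2.10CT} for the tanh approximants with $H^\ell(\Omega)$ rates for $0\le\ell\le 4$, a product splitting of the nonlinearity estimated via H\"older and Sobolev embedding, and a trace inequality for the initial-condition residual, with the biharmonic term dictating the dominant $\ln^4(\beta N)\,N^{-(k-4)}$ rate. The only deviation is cosmetic: you split the quadratic term as $(\overline u-u)\cdot\nabla\overline u_j+u\cdot\nabla e_j$ (requiring a uniform bound on $\nabla\overline u_j$, which follows from the $H^4$ error bound since $k\ge 5$), whereas the paper uses the mirrored splitting $(u-\overline u)\cdot\nabla u_j+\overline u\cdot\nabla e_j$ with $u\in C^1(\Omega)$; both pieces decay faster than $N^{-(k-4)}$ either way.
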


\begin{proof}
Let $N>6$. By Corollary \ref{C3.2}, it holds that $u\in H^k(\mathbb{T}^d\times[0,T])$. As a result of Theorem \ref{2.10CT}, there then exists for every $1\le j\le d$ a tanh neural network $\overline u_j:=\overline u^N_j$ with two hidden layers and widths
\[
3\Big\lceil \tfrac{k+n-2}{2}\Big\rceil \binom{d+k-1}{d} + \lceil TN\rceil + dN
\quad\text{and}\quad
3\Big\lceil \tfrac{d+n}{2}\Big\rceil \binom{2d+1}{d}\,\lceil TN\rceil\,N^d
\]
such that for every $0\le \ell \le 4$,
\begin{equation}
\|u_j-\overline u_j\|_{H^\ell(\Omega)} \le C_{\ell,k,d+1,u_j}\,\lambda_\ell(N)\,N^{-k+\ell}, \label{eq:3.4}
\end{equation}
where $\lambda_\ell(N)=2^{\ell+1}3^{d}(1+\delta)\,\ln^{\ell}\!\big(\beta_{\ell,d+1,u_j}\,N^{\,d+k+2}\big)$, $\delta=\tfrac1{100}$, and the definition of
the other constants can be found in Theorem \ref{2.10CT}. The weights can be bounded
by $O(N^{\gamma}\ln(N))$ where $\gamma=\max\{1,\, d(2 + k^2 + d)/n\}$. We write $\overline u=(\overline u_1,\dots,\overline u_d)$.
We compute 
\begin{align}
\|(u_j)_t-(\overline u_j)_t\|_{L^2(\Omega)} \le \|u_j-\overline u_j\|_{H^1(\Omega)}\leq C_{1,k,d+1,u_j} \lambda_1(N)N^{(-(k-1))}.\label{4.21A}
\end{align}
By the Sobolev embedding theorem (Lemma \ref{2.8K}) it follows from $u\in C^1([0,T];H^{r-4}(\mathbb{T}^d))$
that $u\in C^1(\Omega)$, and hence
\begin{align}
\lVert u\cdot \nabla u_j-\overline{u}\cdot \nabla \overline{u}_j\rVert_{L^2(\Omega)}&=
\| u \cdot \nabla u_j - \overline{u} \cdot \nabla u_j + \overline{u} \cdot \nabla u_j - \overline{u} \cdot \nabla \overline{u}_j \|_{L^2(\Omega)}
\nonumber\\
&= \left\| \sum_{i=1}^d \left( u_i \partial_i u_j - \overline{u}_i \partial_i u_j + \overline{u}_i \partial_i u_j - \overline{u}_i \partial_i \overline{u}_j \right) \right\|_{L^2(\Omega)}
\nonumber\\
&\le  \| \sum_{i=1}^d(u_i - \overline{u}_i) \partial_i u_j \|_{L^2} 
   + \| \sum_{i=1}^d \overline{u}_i \, \partial_i (u_j - \overline{u}_j)\|_{L^2(\Omega)}
\nonumber\\
&\le \sum_{i=1}^d \| \partial_i u_j \|_{L^\infty(\Omega)} \| u_i - \overline{u}_i \|_{L^2(\Omega)}
   + \sum_{i=1}^d \| \overline{u}_i \|_{L^\infty} \| \partial_i (u_j - \overline{u}_j) \|_{L^2}
\nonumber\\
&\le d \, \lVert  \nabla u_j \rVert_{L^\infty(\Omega)} \max_i \lVert  u_i - \overline{u}_i \rVert_{L^2(\Omega)}+ d \max_{i}\lVert  \overline{u}_i \rVert_{L^\infty(\Omega)} \lVert  u_j - \overline{u}_j \rVert_{H^1(\Omega)}
\nonumber\\
&\le d \, \lVert  \nabla u_j \rVert_{C^1(\Omega)} \max_i \lVert  u_i - \overline{u}_i \rVert_{L^2(\Omega)}+ d \max_{i}\lVert  \overline{u}_i \rVert_{C^0(\Omega)} \lVert  u_j - \overline{u}_j \rVert_{H^1(\Omega)}
\nonumber\\
& \leq d \lambda_0(N)\lVert \nabla  u_j\rVert_{C^1(\Omega)}C_{0,k,d+1,u_j}N^{-k} +C_{1,k,d+1,u_j}\lambda_1(N) d \max_{i}\lVert \overline{u}_i\rVert_{C^0}N^{-(k-1)}.
\label{4.22A}
\end{align}
and 
\begin{align}
\lVert \Delta u_j - \Delta \overline u_j\rVert_{L^2(\Omega)} \le d \lVert u_j-\overline u_j\rVert_{H^2(\Omega)}
\leq d C_{2,k,d+1,u_j} \lambda_2(N)N^{(-(k-2))}\label{4.23A},
\end{align}
finally also
\begin{align}
\lVert \Delta^2 u_j - \Delta^2 \overline{u}_j \rVert_{L^2(\Omega)} \le C \lVert u_j - \overline{u}_j \rVert_{H^4(\Omega)}\leq d C_{4,k,d+1,u_j} \lambda_4(N)N^{(-(k-4))}
\label{eq:4.24A}.
\end{align}
Therefore, putting all the estimates \eqref{4.21A}-\eqref{eq:4.24A}, for $1\le j\le d$, we obtain
\begin{align}
\lVert (  (\overline u_j)_t + \overline u\cdot\nabla \overline u_j + \lambda \Delta \overline{u}_j  + \Delta^2 \overline u_j\rVert_{L^2(\Omega)}
&\le C_{1,k,d+1,u_j} \lambda_1(N)N^{(-(k-1))}+d \lambda_0(N)\lVert \nabla  u_j\rVert_{C^1(\Omega)}C_{0,k,d+1,u_j}N^{-k} 
\nonumber\\
&+C_{1,k,d+1,u_j}\lambda_1(N) d \max_{i}\lVert \hat{u}_i\rVert_{C^0}N^{-(k-1)}
+d C_{2,k,d+1,u_j} \lambda_2(N)N^{(-(k-2))}
\nonumber\\
&+ d C_{4,k,d+1,u_j} \lambda_4(N)N^{(-(k-4))}.
\label{4.25A}
\end{align}
Finally, we find from the multiplicative trace theorem (Lemma \ref{MTI}) that
\begin{align}
\|(u_0)_j - \overline u_j(t=0)\|_{L^2(\mathbb{T}^d)}
&\le \|u_j - \overline u_j\|_{L^2(\partial \Omega)}
\le \sqrt{\frac{2\max\{2h_\Omega,\, d+1\}}{\rho_\Omega}}\;\|u_j-\overline u_j\|_{H^1(\Omega)} \notag\\
&\le \sqrt{\frac{2\max\{2h_\Omega,\, d+1\}}{\rho_\Omega}}\;C_{1,k,d+1,u_1}\,\lambda_1(N)\,N^{-k+1}, \label{eq:3.12}
\end{align}
where $h_\Omega$ is the diameter of $\Omega$ and $\rho_\Omega$ is the radius of the largest $(d+1)$-dimensional
ball, which concludes the proof.
\end{proof}
\begin{remark}
The bounds \eqref{3.13A}-\eqref{3.13B} in Theorem \ref{3.11K} affirmatively answer Question Q1 when $N \to \infty$, by which we can make PINN residuals \eqref{2.5PL} and the generalization error \eqref{2.6H} arbitrarily small.
\end{remark}
We next establish an $L^2$-error bound between the true solution $u$ and the PINN approximation $u^{\ast}$ to the Kuramoto-Sivashinsky equation \eqref{KSE} on the periodic domain $D = [0,2]^d$ and $n$ denotes the unit normal vector to the boundary, $\partial D$. We define the following PINNs-related residuals:
\begin{equation}
\begin{cases}
\begin{aligned}
\mathcal{R}_{\mathrm{PDE}} &= u_t^{\ast} + (u^{\ast}\!\cdot\!\nabla)u^{\ast} + \Delta u^{\ast} + \Delta^2 u^{\ast}, 
&& (x,t)\in D\times(0,T),\\
\mathcal{R}_{\text{sb}}^{1} &= u^{\ast}(x)-u^{\ast}(x+2),
&& (x,t)\in\partial D\times(0,T),\\
\mathcal{R}_{\text{sb}}^{2} &= \partial_n(u^{\ast}(x)-u^{\ast}(x+2)),
&& (x,t)\in\partial D\times(0,T),\\
\mathcal{R}_{\text{sb}}^{3} &= \Delta u^{\ast}(x)-\Delta u^{\ast}(x+2),
&& (x,t)\in\partial D\times(0,T),\\
\mathcal{R}_{\text{sb}}^{4} &= \partial_n(\Delta u^{\ast}(x)-\Delta u^{\ast}(x+2)),
&& (x,t)\in\partial D\times(0,T),\\
\mathcal{R}_{\text{tb}}&=u^{\ast}(t=0)-u(t=0), 
&& x\in D.
\end{aligned}
\end{cases}
\label{234A}
\end{equation}
Using PINN-residuls \eqref{234A} together with  the  Kuramoto-Sivashinsky equation \eqref{KSE}, we state the following result. 
\begin{theorem} 
\label{Th 4.3}
Let $u \in \mathbb{C}^4(\bar{D} \times [0,T])$ be the  solution of the Kuramoto-Sivashinsky equation \eqref{KSE}. Let $u^* \in \mathbb{C}^4(\bar{D} \times [0,T])$ is  the PINNs generated solution, then the resulting $L^2$-error is bounded by,
\begin{align}
\lVert u^{\ast}(x,t)- u(x,t)\rVert_{L^2(\Omega)}^{2} \leq C_3(T+C_{2}T^2e^{C_{2}T}),
\end{align}
where 
\begin{align}
C=C(\lVert u\rVert_{\mathbb{C}(\Omega)}^4,\lVert \hat{u}\rVert_{\mathbb{C}(\Omega)}^4 ),
\end{align}
and
\begin{align}
C_2= C_1(\lVert \hat{u}\rVert_{L^{\infty}(D)}^{2}+ \lVert u \rVert_{L^{\infty}(D)}^{2}+ \lVert \nabla u\rVert_{L^2(D)}^{2}+1),
\end{align}
where $C_1>0$ arising from Young’s inequality, 
and
\begin{align*}
C_3&=\lVert\mathcal{R}_{tb}\rVert_{L^2(\partial D)}^{2}+ C\lVert \mathcal{R}_{\textrm{PDE}}\rVert_{L^2(\Omega)}
\nonumber\\
&+C \sqrt{\partial D}T^{1/2}\big[\lVert \mathcal{R}_{sb}^{1}\rVert_{L^2(\partial D\times [0,T])}+\lVert \mathcal{R}_{sb}^{2}\rVert_{L^2(\partial D\times [0,T])}+\lVert \mathcal{R}_{sb}^{3}\rVert_{L^2(\partial D\times [0,T])}+\lVert \mathcal{R}_{sb}^{4}\rVert_{L^2(\partial D\times [0,T])}\big],
\end{align*}
with $\Omega= D \times [0,T]$.
\end{theorem}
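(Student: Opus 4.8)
The plan is to run a Grönwall-type $L^2$ energy estimate on the error $\hat e:=u^{\ast}-u$. First I would use that $u$ solves \eqref{KSE} and that $u^{\ast}$ satisfies, by the definition of the residual in \eqref{234A}, $u^{\ast}_t+(u^{\ast}\cdot\nabla)u^{\ast}+\Delta u^{\ast}+\Delta^2 u^{\ast}=\mathcal R_{\mathrm{PDE}}$; subtracting the two gives the evolution equation for the error,
\[
\hat e_t+\big[(u^{\ast}\cdot\nabla)u^{\ast}-(u\cdot\nabla)u\big]+\Delta\hat e+\Delta^2\hat e=\mathcal R_{\mathrm{PDE}},\qquad \hat e(\cdot,0)=\mathcal R_{tb}.
\]
I would then take the $L^2(D)$ inner product with $\hat e$, producing $\tfrac12\frac{d}{dt}\|\hat e\|_{L^2(D)}^2$ on the left, and estimate the four remaining terms one at a time.

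For the biharmonic term, two integrations by parts yield $\int_D\Delta^2\hat e\cdot\hat e\,dx=\|\Delta\hat e\|_{L^2(D)}^2+\int_{\partial D}\big(\partial_n(\Delta\hat e)\cdot\hat e-\Delta\hat e\cdot\partial_n\hat e\big)\,ds$. Since the exact solution $u$ is $D$-periodic, the boundary jumps of $\hat e$ across opposite faces of $D=[0,2]^d$ equal those of $u^{\ast}$, i.e.\ they are precisely the residuals $\mathcal R_{sb}^1,\dots,\mathcal R_{sb}^4$; pairing opposite faces and applying Cauchy--Schwarz together with the multiplicative trace inequality (Lemma \ref{MTI}) and the assumed $\mathbb C^4$-bounds then controls these boundary contributions by $\sqrt{|\partial D|}\,\big(\|\mathcal R_{sb}^1\|_{L^2(\partial D)}+\cdots+\|\mathcal R_{sb}^4\|_{L^2(\partial D)}\big)$ up to the constant $C$. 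The anti-diffusive (backward-heat) term contributes $\int_D\Delta\hat e\cdot\hat e\,dx=-\|\nabla\hat e\|_{L^2(D)}^2+(\text{boundary})$, whose interior part has the \emph{wrong} sign; I would absorb any $\|\nabla\hat e\|^2$ that appears into the biharmonic dissipation $\|\Delta\hat e\|_{L^2(D)}^2$ via Gagliardo--Nirenberg and Young, keeping a strictly positive leftover. For the nonlinearity I would use the splitting $(u^{\ast}\cdot\nabla)u^{\ast}-(u\cdot\nabla)u=(\hat e\cdot\nabla)u^{\ast}+(u\cdot\nabla)\hat e$, bound $\|\nabla u^{\ast}\|_{L^\infty}$ and $\|u\|_{L^\infty}$ by the $\mathbb C^4$-norms, and close with Cauchy--Schwarz and Young so that every piece is controlled by $\|\hat e\|_{L^2(D)}^2$ plus a small multiple of $\|\Delta\hat e\|_{L^2(D)}^2$. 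The forcing term is handled by $\int_D\mathcal R_{\mathrm{PDE}}\cdot\hat e\,dx\le\tfrac12\|\mathcal R_{\mathrm{PDE}}\|_{L^2(D)}^2+\tfrac12\|\hat e\|_{L^2(D)}^2$.

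After absorbing the favorable $\|\Delta\hat e\|^2$ on the left, I expect a differential inequality of the form $\frac{d}{dt}\|\hat e\|_{L^2(D)}^2\le C_2\|\hat e\|_{L^2(D)}^2+(\text{residual forcing})$, with $C_2$ collecting $\|u^{\ast}\|_{L^\infty}^2$, $\|u\|_{L^\infty}^2$, $\|\nabla u\|_{L^2}^2$ and the absorbed constants, which is exactly the stated form. Integrating in time with initial datum $\|\mathcal R_{tb}\|^2$, applying Cauchy--Schwarz in time to the boundary terms ---which is what produces the factor $T^{1/2}$ alongside $\sqrt{|\partial D|}$--- and then invoking Grönwall's inequality, followed by one further integration over $[0,T]$ to pass from the pointwise-in-$t$ bound to the space-time norm $\|\hat e\|_{L^2(\Omega)}^2$, yields the claimed estimate with the time factor $T+C_2T^2e^{C_2T}$ and $C_3$ as defined.

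The main obstacle, and the feature that separates this from the Navier--Stokes or Burgers error analyses, is twofold. First, the backward-heat term is anti-diffusive, so the only genuinely dissipative mechanism is the biharmonic term; one must verify that the $\|\nabla\hat e\|^2$ produced both by this wrong-sign term and by the nonlinearity can be absorbed into $\|\Delta\hat e\|^2$ with room to spare, which is where the fourth-order structure is essential. Second, the biharmonic operator generates third-order boundary traces whose control forces the $\mathbb C^4$ hypothesis and the trace inequality (Difficulty 1). Because the divergence-free condition is absent, the nonlinear term enjoys no cancellation (Difficulty 2) and must be treated purely perturbatively, which is precisely why the constant $C$ depends on the fourth powers of the $\mathbb C(\Omega)$-norms of both $u$ and $u^{\ast}$.
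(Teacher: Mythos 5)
Your proposal is correct and follows essentially the same route as the paper's proof: an $L^2(D)$ energy estimate for $\hat u = u^{\ast}-u$ with the same (algebraically equivalent) splitting of the nonlinearity, double integration by parts of $\Delta^2 \hat u$ producing boundary traces identified with $\mathcal{R}_{sb}^{1},\dots,\mathcal{R}_{sb}^{4}$ and controlled via the trace inequality and the $\mathbb{C}^4$ bounds, absorption of the wrong-sign $\lVert \nabla \hat u\rVert_{L^2(D)}^2$ into $\lVert \Delta \hat u\rVert_{L^2(D)}^2$ by interpolation and Young, then Cauchy--Schwarz in time (giving the $T^{1/2}\sqrt{\lvert \partial D\rvert}$ factor), Gr\"onwall, and a final integration over $[0,T]$. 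The only cosmetic deviation is your Young-inequality treatment of $\int_D \mathcal{R}_{\mathrm{PDE}}\cdot \hat u\,dx$, which would place $\lVert \mathcal{R}_{\mathrm{PDE}}\rVert_{L^2(\Omega)}^2$ in $C_3$, whereas the paper applies Cauchy--Schwarz together with the uniform bound on $\hat u$ to retain the first power $C\lVert \mathcal{R}_{\mathrm{PDE}}\rVert_{L^2(\Omega)}$ exactly as stated.
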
 
\begin{proof}    
We denote the difference between the underlying solution \( u \) and PINN \( u^* \) as \( \hat{u} = u^* - u \):
\begin{align}
&\mathcal{R}_{PDE}=\hat{u}_t + (\hat{u} \cdot \nabla) \hat{u} + (u \cdot \nabla) \hat{u} + (\hat{u} \cdot \nabla) u+\Delta \hat{u}+\Delta^2 \hat{u}, \quad (x, t) \in D \times (0, T),
\nonumber\\
&
\mathcal{R}_{\text{sb}}^{1}= u^*(x)-u^*(x+2), \quad (x, t) \in \partial D \times (0, T),
\nonumber\\
&
\mathcal{R}_{\text{sb}}^{2}= \partial_n(u^*(x)-u^*(x+2)), \quad (x, t) \in \partial D \times (0, T),
\nonumber\\
& \mathcal{R}_{\text{sb}}^{3}= \Delta u^*(x)-\Delta u^*(x+2), \quad (x, t) \in \partial D \times (0, T),
\nonumber\\
&\mathcal{R}_{\text{sb}}^{4}=\partial_n( \Delta u^*(x)-\Delta u^*(x+2)), \quad (x, t) \in \partial D \times (0, T),
\nonumber\\
&
\mathcal{R}_{\text{tb}}=\hat{u}(x, 0), \quad x \in D.
\end{align}
We take $L^2$-inner products of the first equation in  with the vector \( \hat{u} \)  yields the following identity:
\begin{align}
\label{4.17A}
\frac{1}{2}\frac{d}{dt} \int_{D}  \lvert \hat{u}\rvert^2 dx 
+\int_{D}\lvert \Delta \hat{u}\rvert^2 dx
&=\int_{\partial D}\Delta \hat{u}\cdot \frac{\partial \hat{u}}{\partial n} ds-\int_{\partial D}\frac{\partial (\Delta \hat{u})}{\partial n}\cdot \hat{u} ds-\int_{D}(\hat{u} \cdot \nabla) \hat{u}\cdot\hat{u}  dx - \int_{D}(u \cdot \nabla) \hat{u}\cdot\hat{u} dx
\nonumber\\
&- \int_{D} (\hat{u}\cdot \nabla)u\cdot \hat{u} dx+\int_{D} \nabla \hat{u} \cdot \nabla \hat{u} dx-\int_{\partial D}\frac{\partial \hat{u}}{\partial n} \cdot \hat{u}+ \int_{D} \hat{u} \cdot \mathcal{R}_{PDE} dx.
\end{align}
Applying Hölder's, and Young's inequalities implies the following inequality:
\begin{align}
\label{4.18A}
&\int_{D}(\hat{u} \cdot \nabla) \hat{u}\cdot\hat{u}  dx
\nonumber\\
& \leq \lVert \hat{u}\rVert_{L^{\infty}(D)}\lVert \nabla \hat{u}\rVert_{L^2(D)}\lVert \hat{u}\rVert_{L^2(D)}
\nonumber\\
& \leq C_1 \lVert \hat{u}\rVert_{L^{\infty}(D)}^{2}\lVert\hat{u}\rVert_{L^2(D)}^{2}+\frac{1}{4}\lVert \nabla \hat{u}\rVert_{L^2(D)}^{2}.
\end{align}
Applying Hölder's, and Young's inequalities implies the following inequality:
\begin{align}
\int_{D}(u \cdot \nabla) \hat{u}\cdot\hat{u} dx&\leq \lVert u\rVert_{L^{\infty}(D)}\lVert \nabla \hat{u}\rVert_{L^2(D)}\lVert \hat{u}\rVert_{L^2(D)}
\nonumber\\
&\leq C_1\lVert u\rVert_{L^{\infty}(D)}^{2}\lVert \hat{u}\rVert_{L^2(D)}^{2}+\frac{1}{4}\lVert \nabla \hat{u}\rVert_{L^2(D)}^{2}.
\end{align}
Applying Hölder's, Gagliardo-Nirenberg,  and Young's inequalities implies the following inequality:
\begin{align}
\int_D  ((\hat{u} \cdot \nabla) u)\cdot\hat{u}\, dx &\leq \lVert \nabla u\rVert_{L^2(D)}  \lVert \hat{u}\rVert_{L^4(D)}^{2}
\nonumber\\
&\leq  \lVert \nabla u\rVert_{L^2(D)}\lVert \hat{u}\rVert_{L^2(D)}\lVert \nabla \hat{u}\rVert_{L^2(D)}
\nonumber\\
&\leq C_1 \lVert \nabla u\rVert_{L^2(D)}^{2}  \lVert \hat{u}\rVert_{L^2(D)}^{2}+ \frac{1}{4}\lVert \nabla \hat{u}\rVert_{L^2(D)}^{2}. 
\end{align}
Applying Lemma \ref{est 500k}, Young's Inequality, we obtain in the following:
\begin{align}
\label{4.26F}
 \| \nabla u \|_{L^2(D)}^2 \leq \frac{1}{4} \| \Delta u \|_{L^2(D)}^2 + C_1 \| u \|_{L^2(D)}^2.
\end{align}
Putting all estimates \eqref{4.18A} to \eqref{4.26F} into \eqref{4.17A} yields the following:
\begin{align}
\frac{1}{2}\frac{d}{dt} \int_{D}  \lvert \hat{u}\rvert^2 dx 
+&\int_{D}\lvert \Delta \hat{u}\rvert^2 dx
=\int_{\partial D}\Delta \hat{u}\cdot \frac{\partial \hat{u}}{\partial n} ds-\int_{\partial D}\frac{\partial (\Delta \hat{u})}{\partial n}\cdot \hat{u} ds-\int_{D}(\hat{u} \cdot \nabla) \hat{u}\cdot\hat{u}  dx - \int_{D}(u \cdot \nabla) \hat{u}\cdot\hat{u} dx
\nonumber\\
&- \int_{D} (\hat{u}\cdot \nabla)u\cdot \hat{u} dx+\int_{D} \nabla \hat{u} \cdot \nabla \hat{u} dx-\int_{\partial D}\frac{\partial \hat{u}}{\partial n} \cdot \hat{u}+ \int_{D} \hat{u} \cdot \mathcal{R}_{PDE} dx.
\nonumber\\
&
\leq  C(\lVert u\rVert_{\mathbb{C}(D)}^4,\lVert \hat{u}\rVert_{\mathbb{C}(D)}^4 ) (\lVert \mathcal{R}_{sb}^{1}\rVert_{L^2(\partial D)} +  \lVert \mathcal{R}_{sb}^2\rVert_{L^2(\partial D)}+\lVert \mathcal{R}_{sb}^3\rVert_{L^2(\partial D)}+\lVert \mathcal{R}_{sb}^4\rVert_{L^2(\partial D)})
\nonumber\\
&+ C_1 \lVert \hat{u}\rVert_{L^{\infty}(D)}^{2} \lVert \hat{u}\rVert_{L^2(D)}^2 + \frac{1}{4} \lVert \nabla \hat{u}\rVert_{L^2(D)}^2
\nonumber\\
&+ C_1 \lVert u\rVert_{L^{\infty}(D)}^{2} \lVert \hat{u}\rVert_{L^2(D)}^2 + \frac{1}{4} \lVert \nabla \hat{u}\rVert_{L^2(D)}^2
+ C_1 \lVert \nabla u\rVert_{L^2(D)}^{2} \lVert \hat{u}\rVert_{L^2(D)}^{2}+ \frac{1}{4} \lVert \nabla \hat{u}\rVert_{L^2(D)}^2 \nonumber\\
&+C(\lVert u\rVert_{\mathbb{C}(D)}^4,\lVert \hat{u}\rVert_{\mathbb{C}(D)}^4 )(\lVert \mathcal{R}_{sb}^{1}\rVert_{L^2(\partial D)}+\lVert \mathcal{R}_{sb}^{2}\rVert_{L^2(\partial D)}  )+ C(\lVert u\rVert_{\mathbb{C}(D)}^4,\lVert \hat{u}\rVert_{\mathbb{C}(D)}^4 ) \lVert \mathcal{R}_{PDE}\rVert_{L^2(D)}
\nonumber\\&+ \frac{1}{4} \lVert  \Delta u \rVert_{L^2(D)}^2+ C_1 \lVert  u \rVert_{L^2(D)}^2
\nonumber\\
&\leq C(\lVert u\rVert_{\mathbb{C}(D)}^4,\lVert \hat{u}\rVert_{\mathbb{C}(D)}^4 ) (\lVert \mathcal{R}_{sb}^1\rVert_{L^2(\partial D)} +  \lVert \mathcal{R}_{sb}^2\rVert_{L^2(\partial D)}+\lVert \mathcal{R}_{sb}^3\rVert_{L^2(\partial D)}+\lVert \mathcal{R}_{sb}^4\rVert_{L^2(\partial D)})
\nonumber\\
&+ C_2 \lVert \hat{u}\rVert_{L^2(D)}^2 + \frac{1}{2} \lVert \Delta \hat{u}\rVert_{L^2(D)}^2 + C(\lVert u\rVert_{\mathbb{C}(D)}^4,\lVert \hat{u}\rVert_{\mathbb{C}(D)}^4 ) \lVert \mathcal{R}_{PDE}\rVert_{L^2(D)},
\label{4.23D}
\end{align}
where we used H\"older's inequality, Young's inequality, trace inequality, and the assumption on $u \in \mathbb{C}^4(\bar{D} \times [0,T])$ and $u^{\ast} \in \mathbb{C}^4(\bar{D} \times [0,T])$ with the traingle inequality allows us to close the estimate. 
Absorbing the corresponding term with left side, integrating over $[0,T]$, and dropping the second term from the left side of \eqref{4.23D} implies
\begin{align}
\int_{D}\lvert \hat{u}\rvert^2 dx&\leq  C_2 \int_{0}^{T}\lVert \hat{u}\rVert_{L^2(D)}^2 dt
+ C(\lVert u\rVert_{\mathbb{C}(\Omega)}^4,\lVert \hat{u}\rVert_{\mathbb{C}(\Omega)}^4 )\int_{0}^{T}(\lVert \mathcal{R}_{sb}^1\rVert_{L^2(\partial D)} +  \lVert \mathcal{R}_{sb}^2\rVert_{L^2(\partial D)}\nonumber\\
&+\lVert \mathcal{R}_{sb}^3\rVert_{L^2(\partial D)}+\lVert \mathcal{R}_{sb}^4\rVert_{L^2(\partial D)}) dt
+C(\lVert u\rVert_{\mathbb{C}(\Omega)}^4,\lVert \hat{u}\rVert_{\mathbb{C}(\Omega)}^4 )\int_{0}^{T}\lVert \mathcal{R}_{PDE}\rVert_{L^2(D)} dt+\lVert \mathcal{R}_{tb}\rVert_{L^2(\partial D)}^{2} \nonumber\\
&\leq  C_2 \int_{0}^{T}\lVert \hat{u}\rVert_{L^2(D)}^2 dt+C_3,
\end{align}
with 
\begin{align}
C_2= C_1(\lVert \hat{u}\rVert_{L^{\infty}(D)}^{2}+ \lVert u \rVert_{L^{\infty}(D)}^{2}+ \lVert \nabla u\rVert_{L^2(D)}^{2}+1),
\end{align}
and
\begin{align*}
C_3&=\lVert\mathcal{R}_{tb}\rVert_{L^2(\partial D)}^{2}+ C\lVert \mathcal{R}_{\textrm{PDE}}\rVert_{L^2(\Omega)}
\nonumber\\
&+C \sqrt{\partial D}T^{1/2}\big[\lVert \mathcal{R}_{sb}^{1}\rVert_{L^2(\partial D\times [0,T])}+\lVert \mathcal{R}_{sb}^{2}\rVert_{L^2(\partial D\times [0,T])}+\lVert \mathcal{R}_{sb}^{3}\rVert_{L^2(\partial D\times [0,T])}+\lVert \mathcal{R}_{sb}^{4}\rVert_{L^2(\partial D\times [0,T])}\big], 
\end{align*} and
applying Gr\"onwall's inequality  yields,
\begin{align}
\lVert \hat{u}\rVert_{L^2(D)}^{2}\leq C_3(1+C_2Te^{C_2 T}).
\end{align}
Integrating from $0$ to $T$ implies in the following
\begin{align}
\label{4.25K}
\lVert u^{\ast}(x,t)- u(x,t)\rVert_{L^2(\Omega)}^{2} \leq C_3(T+C_{2}T^2e^{C_{2}T}).
\end{align}
\end{proof}
\begin{remark}
 To answer Question Q2 affirmatively, we note that for PINNs, the $L^2$-error is uniquely bounded in terms of the residuals that constitute the PINN generalization error \eqref{2.6H}. Consequently, if the neural network achieves a small PINN loss and its $\mathbb{C}^4$-norm remains bounded, the corresponding $L^2$-error will also be small. 
\end{remark}
We now turn our focus to addressing Question Q3. By combining the following result \ref{4.4} with Theorem \ref{Th 4.3}, we obtain a bound for the total error in terms of the training error and the size of the training set, thereby providing an affirmative answer to Question Q3.
\begin{theorem}
\label{4.4}
Let $T > 0$, let $u \in \mathbb{C}^6(\overline{D} \times [0,T])$ be the classical solution of the Kuramoto-Sivashinsky equation \eqref{KSE} and let $u^{\ast} \in \mathbb{C}^6(\overline{D} \times [0,T])$ is the PINNs generated solution. Then the following error bound holds,
\begin{align}
\label{4.26W}
\lVert u^{\ast}(x,t)- u(x,t)\rVert_{L^2(\Omega)}^{2}&\le\; C_3(M)(T+C_{2}T^2e^{C_{2}T}),
\nonumber\\
&= \mathcal{O}\Bigl(E_T(S) + M_t^{-2/d} + M_{\mathrm{int}}^{-1/(d+1)} + M_{sb,1}^{-1/d}+ M_{sb,2}^{-1/d}+M_{sb,3}^{-1/d}+ M_{sb,4}^{-1/d} \Bigr).
\end{align}
In the above formula, the constant $C_2$ and $C_3(M)$ are defined as,
\begin{align}
C_2= C_1(\lVert \hat{u}\rVert_{L^{\infty}(\Omega)}^{2}+ \lVert u \rVert_{L^{\infty}(\Omega)}^{2}+ \lVert \nabla u\rVert_{L^2(\Omega)}^{2}+1),
\end{align} and
\begin{align}
C_3(M) &= E_T^t(S_t)^2 + C_t M_t^{-2/d}
+ E_T^{\mathrm{PDE}}(\theta,S_{\mathrm{int}})^2
+ C_{\mathrm{PDE}} M_{\mathrm{int}}^{-2/(d+1)} +CT^{1/2}\big[E_T^{sb,1}(\theta, S_{sb,1})+C_{sb,1}M_{sb,1}^{-\frac{1}{d}}
\nonumber\\
&
+E_T^{sb,2}(\theta, S_{sb,2})+ C_{sb,2}M_{sb,2}^{-\frac{1}{d}}+E_T^{sb,3}(\theta, S_{sb,3})+C_{sb,3}M_{sb,3}^{-\frac{1}{d}}+E_T^{sb,4}(\theta, S_{sb,4})+ C_{sb,4}M_{sb,4}^{-\frac{1}{d}}\big].
\end{align}
and where,
\begin{align}
&C \lesssim \|u\|_{C^4} + \|\hat u\rVert_{C^4}
\;\lesssim\;  \lVert u\rVert_{C^4}+16^{4L}(d+1)^8 (4^4 e^{2} W^3 R^4 \|\sigma\|_{C^4})^{4L},
\nonumber\\
&C_t \lesssim  \|u\|_{C^2}^2+\|\hat u\|_{C^2}^2
\;\lesssim\; \|u\|_{C^2}^2 + (e^{2}2^{6} W^3 R^2 \|\sigma\|_{C^2})^{2L}, 
\nonumber\\
&C_{\mathrm{PDE}}^{2} \lesssim \|\hat u_j\|_{C^6}^2
\;\lesssim\; ( e^{2}{6^4} W^3 R^6 \|\sigma\|_{C^6})^{6L},
\nonumber\\
&C_{sb,1}^2 \lesssim \lVert \hat u_j\rVert_{C^2}^{2}
\;\lesssim\; ( e^{2}2^{4} W^3 R^2 \lVert \sigma\rVert_{C^2})^{2L},
\nonumber\\
&C_{sb,2}^2 \lesssim \lVert \hat u_j\rVert_{C^3}^{2}
\;\lesssim\; ( e^{2}3^{4} W^3 R^3 \lVert \sigma\rVert_{C^3})^{3L},
\nonumber\\
& C_{sb,3}^2 \lesssim \lVert \hat u_j\rVert_{C^4}^{2}
\;\lesssim\; ( e^{2}4^{4} W^3 R^4 \lVert \sigma\rVert_{C^4})^{4L},
\nonumber\\
& C_{sb,4}^2 \lesssim \lVert \hat u_j\rVert_{C^5}^{2}
\;\lesssim\; ( e^{2}5^{4} W^3 R^5 \lVert \sigma\rVert_{C^5})^{5L}.
\end{align}
\label{4.30K}
\end{theorem}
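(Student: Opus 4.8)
The plan is to treat Theorem~\ref{4.4} as a quadrature-discretization of Theorem~\ref{Th 4.3}: that is, to start from the $L^2$-error estimate already established there,
\[
\lVert u^{\ast}-u\rVert_{L^2(\Omega)}^2 \le C_3\,(T+C_2 T^2 e^{C_2 T}),
\]
and to convert the \emph{generalization}-type quantities hidden inside $C_3$ — the continuous residual norms $\lVert \mathcal{R}_{\mathrm{PDE}}\rVert_{L^2(\Omega)}$, $\lVert \mathcal{R}_{sb}^{i}\rVert_{L^2(\partial D\times[0,T])}$ and $\lVert \mathcal{R}_{tb}\rVert_{L^2(D)}$ — into the corresponding \emph{training} quantities plus a quadrature remainder, thereby producing $C_3(M)$. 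The multiplicative prefactor $T+C_2 T^2 e^{C_2 T}$ and the constant $C_2$ are inherited verbatim from Theorem~\ref{Th 4.3}, so the only object that must be reworked is $C_3$, and the Gr\"onwall structure of the bound is untouched.

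The engine of the conversion is the midpoint-rule error bound \eqref{quad1}. For any residual $R$ integrated over a domain $\Lambda$ of dimension $\dim\Lambda$, setting $g=R^2$ yields
\[
\int_\Lambda R^2\,dy = Q_{M}^{\Lambda}[R^2] + \mathrm{err}, \qquad |\mathrm{err}|\le C_{R^2}\, M^{-2/\dim\Lambda},\quad C_{R^2}\lesssim \lVert R^2\rVert_{C^2},
\]
where $Q_{M}^{\Lambda}[R^2]$ is exactly the matching summand of the training loss \eqref{4.2ST}–\eqref{34LK}. I would then split the residuals by the power in which they enter $C_3$. The terms appearing quadratically — the initial residual on the $d$-dimensional slab $D$ and, after a Cauchy–Schwarz step in time, the PDE residual on the $(d+1)$-dimensional domain $\Omega$ — are substituted directly, giving $E_T^{t}(S_t)^2+C_t M_t^{-2/d}$ and $E_T^{\mathrm{PDE}}(\theta,S_{\mathrm{int}})^2+C_{\mathrm{PDE}} M_{\mathrm{int}}^{-2/(d+1)}$. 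The four boundary residuals enter $C_3$ only to the first power as $\lVert \mathcal{R}_{sb}^{i}\rVert_{L^2}$; for these I would apply the substitution to $\int (\mathcal{R}_{sb}^{i})^2$ and then use $\sqrt{a+b}\le\sqrt a+\sqrt b$ to peel off $E_T^{sb,i}(\theta,S_{sb,i})+C_{sb,i}M_{sb,i}^{-1/d}$, which is precisely why the boundary contributions carry the half exponent $-1/d$.

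The main obstacle is controlling the quadrature constants $C_{R^2}\lesssim\lVert R^2\rVert_{C^2}\lesssim\lVert R\rVert_{C^2}^2$ explicitly in the network architecture. The crucial observation is a derivative count: a residual built from a $p$-th order differential operator applied to $u^{\ast}=u_\theta$ has $C^2$-norm controlled by $\lVert u^{\ast}\rVert_{C^{p+2}}$. Since $\mathcal{R}_{\mathrm{PDE}}$ contains the biharmonic term $\Delta^2 u^{\ast}$ we have $p=4$, which forces the $\mathbb{C}^6$ regularity assumed in the statement and gives $C_{\mathrm{PDE}}\lesssim\lVert\hat u\rVert_{C^6}$; likewise $\mathcal{R}_{sb}^{1},\dots,\mathcal{R}_{sb}^{4}$ have orders $0,1,2,3$ and so depend on the $C^2,C^3,C^4,C^5$ norms, while $\mathcal{R}_{tb}$ and the Green's-formula/trace constant $C$ inherited from Theorem~\ref{Th 4.3} depend on the $C^2$ and $C^4$ norms respectively. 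Converting each such $C^k$-norm of the tanh network $u_\theta$ into the stated explicit form $\bigl(e^{2}k^{4}W^{3}R^{k}\lVert\sigma\rVert_{C^k}\bigr)^{kL}$ requires the quantitative bounds on derivatives of tanh neural networks from \cite{DE}; assembling these while tracking the depth/width dependence is the delicate bookkeeping. Finally, collecting all contributions into $C_3(M)$ and inserting it back into the Theorem~\ref{Th 4.3} bound yields the claimed order estimate \eqref{4.26W}, completing the argument.
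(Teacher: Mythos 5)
Your proposal is correct and follows essentially the same route as the paper, whose proof is precisely the one-line combination of Theorem~\ref{Th 4.3}, the midpoint quadrature bound \eqref{quad1}, and the network-derivative bound of Lemma~\ref{1KO}; your derivative counts (orders $0,1,2,3$ for $\mathcal{R}_{sb}^{1},\dots,\mathcal{R}_{sb}^{4}$, order $4$ for $\mathcal{R}_{\mathrm{PDE}}$, hence the $\mathbb{C}^6$ hypothesis) and the square-root peeling $\sqrt{a+b}\le\sqrt{a}+\sqrt{b}$ for the linearly occurring boundary residuals reproduce exactly the structure of $C_3(M)$. In fact your write-up supplies more detail than the paper does, and it correctly accounts for both exponents ($M_{\mathrm{int}}^{-2/(d+1)}$ inside $C_3(M)$ versus $M_{\mathrm{int}}^{-1/(d+1)}$ in the $\mathcal{O}$-statement) that coexist in the theorem as stated.
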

\begin{proof}
The proof is straightforward and follows directly from Theorem \ref{Th 4.3}, the quadrature error bound \eqref{quad1}, and Lemma \ref{1KO}.  
\end{proof}
We next investigate the numerical experiments to illustrate PINN approximation of the  Kuramoto-Sivashinsky equation \eqref{KSE}.
\section{Numerical Experiments}
\label{sec4}
In this section, we present numerical experiments designed to empirically validate the theoretical error bounds derived for the PINN approximations of the Kuramoto-Sivashinsky equation. 

We consider the two-dimensional Kuramoto-Sivashinsky equation, for which a manufactured solution is defined as
\begin{align*}
u(x,y,t) = -\cos(\pi x)\sin(\pi y)\,
\exp\!\left(-\frac{\pi^2}{4}\lambda t\right),
\qquad
v(x,y,t) = \sin(\pi x)\cos(\pi y)\,
\exp\!\left(-\frac{\pi^2}{4}\lambda t\right),
\end{align*}
on the spatio-temporal domain \((x,y)\in[0,2]^2\) and \(t\in[0,1]\).
Using the manufactured solution as a reference, we approximate the solution of the two-dimensional Kuramoto-Sivashinsky equation within the PINN framework. 
The training is conducted in two stages: the first stage consists of 15K Adam and 5K L-BFGS iterations with a learning rate of \(1.0 \times 10^{-4}\); the second stage consists of 5K Adam and 5K L-BFGS iterations with a learning rate of \(2.5 \times 10^{-5}\). The PINN architecture employs two hidden layers with 80 neurons per layer and the hyperbolic tangent activation function, consistent with the theoretical setting. Based on this setup, Figures~\ref{fig:combined} and \ref{fig:three_v_cases} illustrate the distributions of the manufactured (exact) and PINN solutions, along with the absolute errors for $u$ and $v$ in the spatial-temporal domain, computed using $10{,}000$ residual quadrature points. 

We next  investigate the influence of the number of quadrature points on both the training and total errors. All experiments employ the same neural network configuration, consisting of two hidden layers with 80 neurons per layer and the hyperbolic tangent activation function.
To further elucidate the theoretical estimates, we evaluate the computable error bound in~\eqref{4.26W}. Using the explicit expressions for the constants provided in~\eqref{4.30K}, we compute the theoretical upper bound in~\eqref{4.26W}, where the following quadrature points are employed:
\[
M_t = 1000, \qquad
M_{\mathrm{int}} = \{10000, 15000, 20000\}, \qquad
M_{sb,1} = M_{sb,2} = M_{sb,3} = M_{sb,4} = 1000.
\]
Here, $M_t$ denotes the number of temporal quadrature points associated with the initial condition, 
$M_{\mathrm{int}}$ corresponds to the number of interior (PDE residual) points, 
and $M_{sb,1}, M_{sb,2}, M_{sb,3}, M_{sb,4}$ denote the number of quadrature points used for the boundary conditions. 
 \begin{figure}[H]
    \centering
    % --- First row: two side-by-side images ---
    \begin{minipage}[b]{0.45\linewidth}
        \centering
        \includegraphics[width=\linewidth]{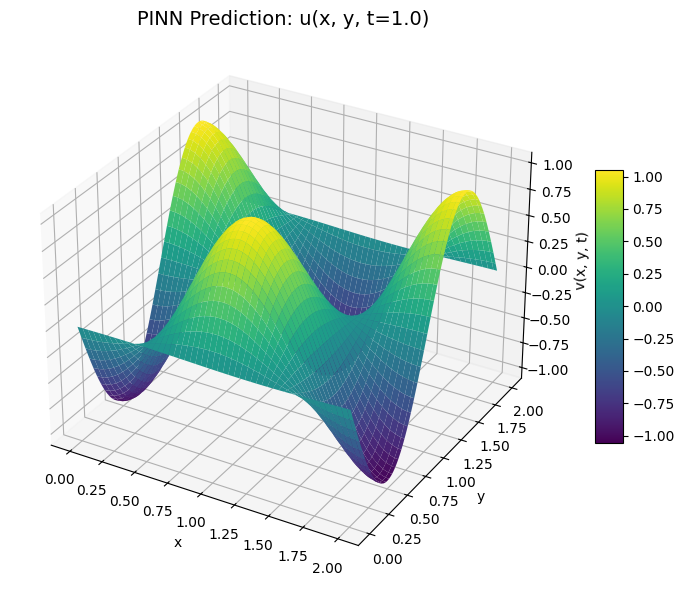}
        %\caption*{(a) Optional subcaption}
    \end{minipage}
    \hfill
    \begin{minipage}[b]{0.45\linewidth}
        \centering
        \includegraphics[width=\linewidth]{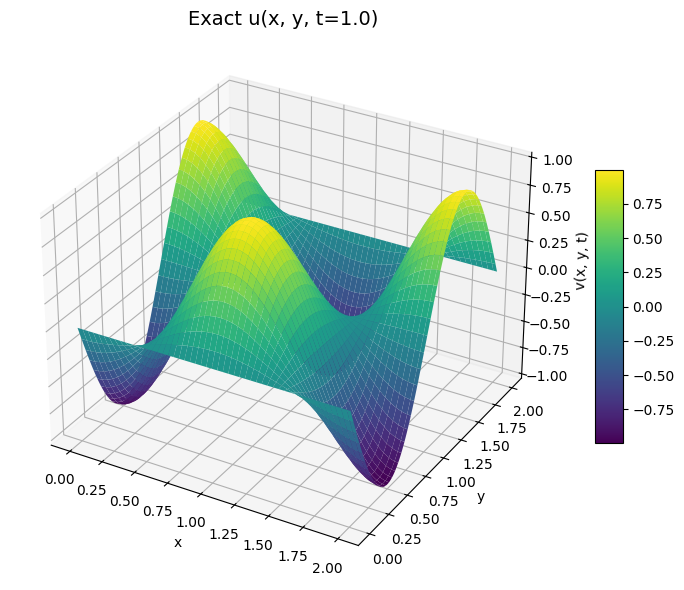}
        %\caption*{(b) Optional subcaption}
    \end{minipage}

    \vspace{1em} % Space between rows

    % --- Second row: single centered image ---
    \begin{minipage}[b]{0.45\linewidth}
        \centering
\includegraphics[width=\linewidth]{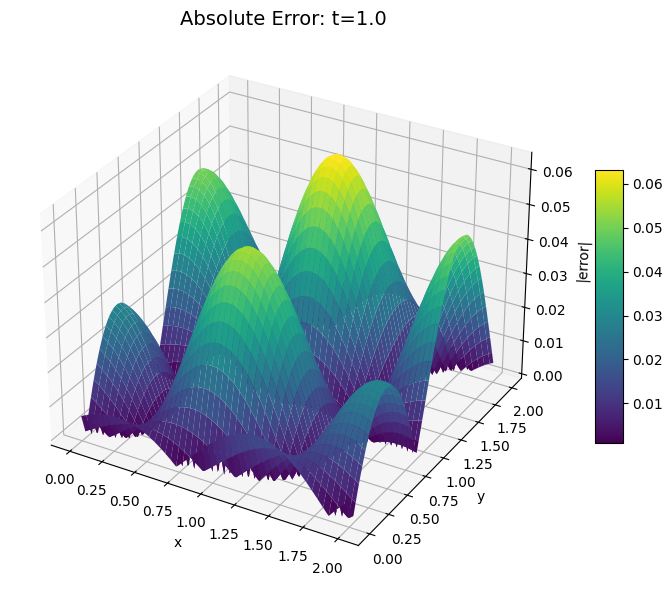}
        %\caption*{(c) Optional subcaption}
    \end{minipage}

    % --- Common caption for all three ---
    \caption{PINNs Solution,  
    Manufacture (Exact) Solution, and Absolute Error}
\label{fig:combined}
\end{figure}
Keeping the number of neurons fixed, we compute the bound while varying the residual quadrature points $\{10000, 15000, 20000\}$ for each $\lambda \in \{0.0, 0.01\}$. 
The total error $\lVert u(x,t) - u^{\ast}(x,t)\rVert_{L^2}$, training error  $E_T(\theta,S)$ in \eqref{34LK}, along with the bound in~\eqref{4.26W}, are presented in \textbf{Figure~\ref{fig:combined_56}} for each $\lambda \in \{0.0, 0.01\}$.

 The results indicate that while the total error $\lVert u(x,t) - u^{\ast}(x,t)\rVert_{L^2}$, training error  $E_T(\theta,S)$ in \eqref{34LK} , and theoretical bound in \eqref{4.26W} decrease marginally with increasing residual points, this trend plateaus; and no further significant decay is observed. This plateau behavior is consistent with the logarithmic factors in Theorem \ref{3.11K}, which dominates the decay rate for large $N$, and suggests that the quadrature error is no longer the bottleneck. \begin{figure}[H]
    \centering
    % --- First row: two side-by-side images ---
    \begin{minipage}[b]{0.45\linewidth}
        \centering
        \includegraphics[width=\linewidth]{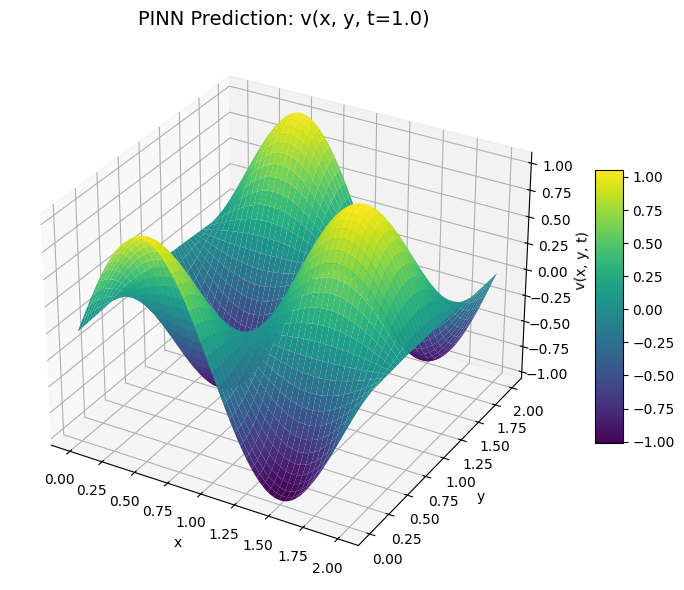}
        %\caption*{(a) Optional subcaption}
    \end{minipage}
    \hfill
    \begin{minipage}[b]{0.45\linewidth}
        \centering
        \includegraphics[width=\linewidth]{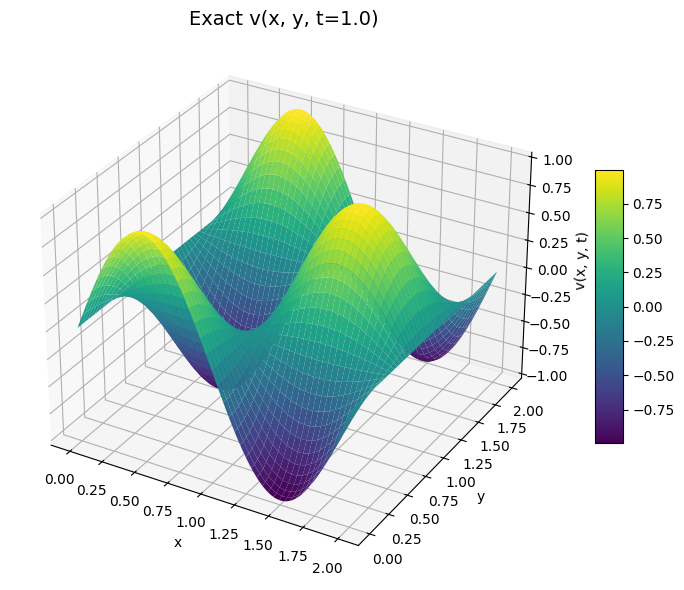}
        %\caption*{(b) Optional subcaption}
    \end{minipage}

    \vspace{1em} % vertical spacing between rows

    % --- Second row: single centered image ---
    \begin{minipage}[b]{0.45\linewidth}
        \centering
        \includegraphics[width=\linewidth]{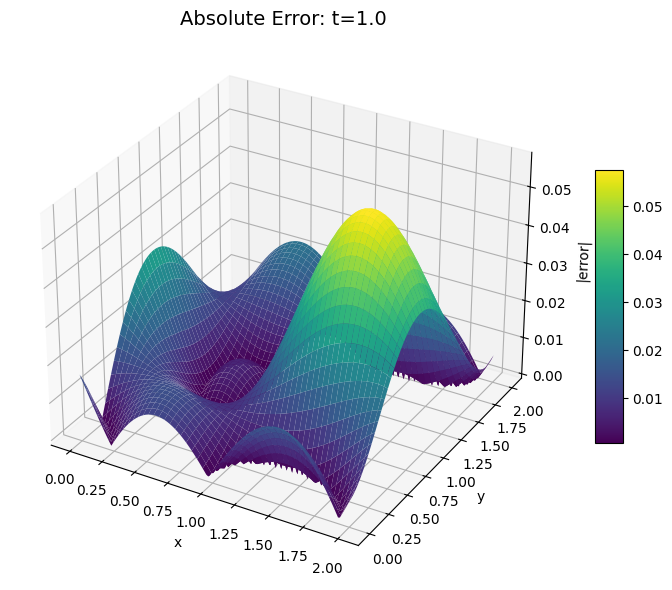}
        %\caption*{(c) Optional subcaption}
    \end{minipage}

    % --- Shared caption for all images ---
    \caption{PINNs Solution, Manufacture (Exact) Solution, and Absolute Error}
    \label{fig:three_v_cases}
\end{figure}

\begin{figure}[H]
    \centering
    % --- Two images side by side ---
    \begin{minipage}[b]{0.45\linewidth}
        \centering
        \includegraphics[width=\linewidth]{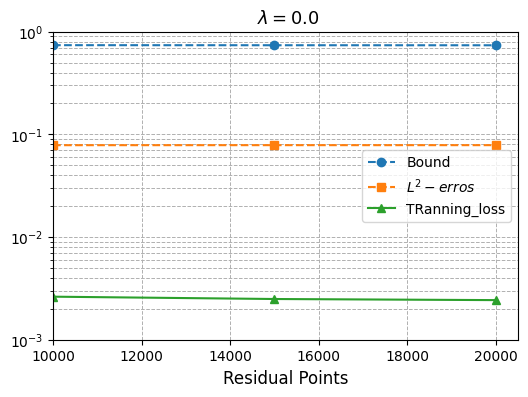}
        %\caption*{(a) Optional subcaption}
    \end{minipage}
    \hfill
    \begin{minipage}[b]{0.45\linewidth}
        \centering
        \includegraphics[width=\linewidth]{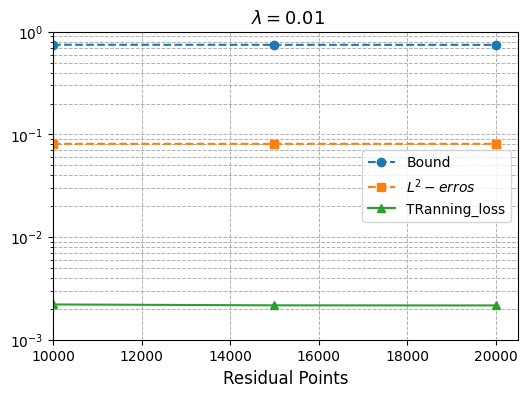}
        %\caption*{(b) Optional subcaption}
    \end{minipage}

    % --- Shared caption for both ---
    \caption{Training error, total error, and bound for different numbers of residual points}
    \label{fig:combined_56}
\end{figure}
The results show that as the number of neurons increases, the total error $\lVert u(x,t) - u^{\ast}(x,t)\rVert_{L^2}$, training error  $E_T(\theta,S)$ in \eqref{34LK}, and bound in \eqref{4.26W} exhibit a slight decrease, followed by a plateau beyond which no further significant decay is observed. According to equation \eqref{4.26W}, the total error depends on both the neural network approximation capacity and the quadrature accuracy. The observed plateau suggests that for the fixed number of quadrature points ($10{,}000$), the quadrature error dominates, making further increases in network width less effective.
\begin{figure}[H]
    \centering
    % --- Two images side by side ---
    \begin{minipage}[b]{0.45\linewidth}
        \centering
        \includegraphics[width=\linewidth]{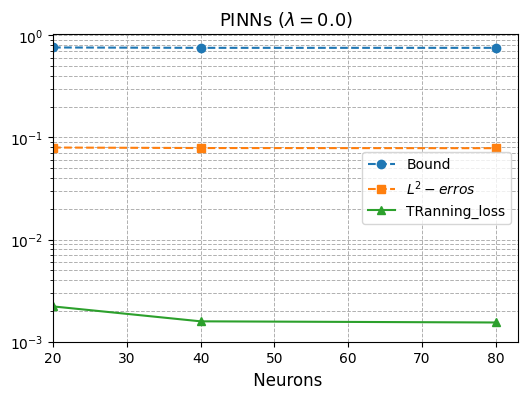}
        %\caption*{(a) Optional subcaption}
    \end{minipage}
    \hfill
    \begin{minipage}[b]{0.45\linewidth}
        \centering
        \includegraphics[width=\linewidth]{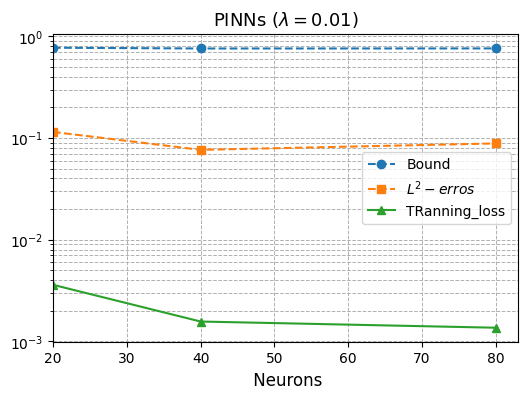}
        %\caption*{(b) Optional subcaption}
    \end{minipage}

    % --- Shared caption for both images ---
    \caption{Training error, total error, and bound for different numbers of neurons}
    \label{fig:combined_34}
\end{figure}
Finally, we investigate the dependence of the total error on the neural network width. Fixing the number of hidden layers at two, we vary the number of neurons $\{20, 40, 80\}$ per layer. The learning rate and the number of quadrature points $10000$ remain fixed. The total error $\lVert u(x,t) - u^{\ast}(x,t)\rVert_{L^2}$, training error  $E_T(\theta,S)$ in \eqref{34LK}, and bound in \eqref{4.26W} are shown in Figure
\ref{fig:combined_34} for each $\lambda=\{0.0,0.01\}$.
\section{Appendix}
\label{Appen}
\begin{theorem}
\label{2.1H}
If $u_0 \in H^m(\mathbb{T}^d)$ with $m > \frac{d}{2} + 4$, then there
exists $T > 0$ and a classical solution $u$ to the  Kuramoto-Sivashinsky equation \eqref{KSE} such that
$u(t=0) = u_0$ and $u \in C([0,T]; H^m(\mathbb{T}^d)) \cap C^1([0,T]; H^{m-4}(\mathbb{T}^d))$.
\end{theorem}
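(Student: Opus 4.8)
The plan is to treat \eqref{KSE} as a semilinear equation whose leading part is the dissipative fourth-order biharmonic operator, with the advection $(u\cdot\nabla)u$ and the anti-diffusive term $\lambda\Delta u$ regarded as lower-order perturbations, and to construct the solution by a Galerkin scheme closed through an $H^m$ energy estimate. First I would set up Galerkin approximations $u^{(n)}$ by projecting \eqref{KSE} onto the span of the first $n$ Fourier modes; local existence of each $u^{(n)}$ follows from standard finite-dimensional ODE theory. The whole argument then rests on a uniform a priori estimate, after which compactness delivers the limit. (Alternatively one could run a contraction mapping on the Duhamel formula using the analytic biharmonic semigroup, whose Fourier multiplier $e^{t(\lambda|\xi|^2-|\xi|^4)}$ is bounded by $e^{t\lambda^2/4}$; I prefer the energy route as it matches the estimates used elsewhere in the paper.)

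The heart of the argument is the a priori bound. Pairing \eqref{KSE} with $u$ in $H^m$ gives the identity
\begin{align*}
\frac12\frac{d}{dt}\|u\|_{H^m}^2 + \|u\|_{\dot H^{m+2}}^2
= \lambda\|u\|_{\dot H^{m+1}}^2 - \big\langle (u\cdot\nabla)u,\,u\big\rangle_{H^m}.
\end{align*}
The anti-diffusive contribution is subordinate to the biharmonic dissipation: by interpolation and Young's inequality, $\lambda\|u\|_{\dot H^{m+1}}^2 \le \tfrac14\|u\|_{\dot H^{m+2}}^2 + C\|u\|_{L^2}^2$. For the nonlinearity I would invoke the Moser product estimate together with $H^m\hookrightarrow L^\infty$ (valid since $m>\tfrac d2+4>\tfrac d2$) to obtain $\|(u\cdot\nabla)u\|_{H^m}\lesssim \|u\|_{H^m}\|u\|_{H^{m+1}}$, whence Cauchy--Schwarz, the interpolation $\|u\|_{H^{m+1}}\le\|u\|_{H^{m+2}}^{1/2}\|u\|_{H^m}^{1/2}$, and Young's inequality give $|\langle (u\cdot\nabla)u,u\rangle_{H^m}|\le \tfrac14\|u\|_{\dot H^{m+2}}^2 + C\|u\|_{H^m}^{10/3}$. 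Absorbing both dissipative terms into the left side yields the closed differential inequality $\frac{d}{dt}\|u\|_{H^m}^2 \le C(\|u\|_{H^m}^{10/3}+\|u\|_{H^m}^2)$, uniform in $n$; a comparison argument then produces a time $T=T(\|u_0\|_{H^m})>0$ and a uniform bound $\sup_{[0,T]}\|u^{(n)}\|_{H^m}^2 + \int_0^T\|u^{(n)}\|_{H^{m+2}}^2\,dt \le C$.

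With these uniform bounds I would extract, via Banach--Alaoglu and the Aubin--Lions lemma (using that $\partial_t u^{(n)}$ is bounded in $L^2([0,T];H^{m-4})$ directly from the equation), a subsequence converging to a limit $u\in L^\infty([0,T];H^m)\cap L^2([0,T];H^{m+2})$ solving \eqref{KSE}; the strong convergence from Aubin--Lions is precisely what passes the limit through the quadratic term $(u\cdot\nabla)u$. Reading off the equation then gives $\partial_t u\in C([0,T];H^{m-4})$, since every term on the right lies in $H^{m-4}$ (the biharmonic term trivially, and $(u\cdot\nabla)u\in H^{m-1}$ by the algebra property of $H^m$), so $u\in C^1([0,T];H^{m-4})$, and a standard argument upgrades weak to strong continuity to give $u\in C([0,T];H^m)$. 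Uniqueness follows from an $L^2$ energy estimate for the difference of two solutions closed by Grönwall. Finally, $m>\tfrac d2+4$ forces $u(t)\in C^4(\mathbb{T}^d)$ by Sobolev embedding, so with the time regularity $u$ is a classical solution.

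I expect the main obstacle to be the interplay between the anti-diffusive term $\lambda\Delta u$ and the nonlinearity, and verifying that both genuinely sit below the fourth-order dissipation in the frequency hierarchy so that the $H^m$ estimate closes. The anti-diffusion is harmless at high frequencies (where $\Delta^2$ dominates $\Delta$) but needs the $C\|u\|_{L^2}^2$ correction at low frequencies; the nonlinear term is the more delicate one, since closing it relies essentially on the gain of two derivatives afforded by the biharmonic operator—an estimate that would fail for a merely second-order leading term.
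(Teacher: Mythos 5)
Your proposal is correct, and its analytic core --- the $H^m$ energy estimate in which the biharmonic dissipation absorbs both the anti-diffusive term (interpolation plus Young, with an $L^2$ correction at low frequencies) and the nonlinearity, yielding a Riccati-type differential inequality and a local time $T=T(\lVert u_0\rVert_{H^m})$ --- is exactly the paper's argument. The differences are peripheral and mostly in your favor. For the nonlinearity the paper uses a Kato--Ponce-type splitting $\lVert \Lambda^m((u\cdot\nabla)u)\rVert_{L^2}\lesssim \lVert \Lambda^m u\rVert_{L^2}\lVert\nabla u\rVert_{L^\infty}+\lVert u\rVert_{L^\infty}\lVert\nabla\Lambda^m u\rVert_{L^2}$ followed by crude Sobolev bounds, ending with $y'\le C(1+\sqrt{y}+y)y$; you use the algebra property $\lVert(u\cdot\nabla)u\rVert_{H^m}\lesssim\lVert u\rVert_{H^m}\lVert u\rVert_{H^{m+1}}$ and the interpolation $\lVert u\rVert_{H^{m+1}}\le\lVert u\rVert_{H^{m+2}}^{1/2}\lVert u\rVert_{H^m}^{1/2}$, ending with $y'\le C(y^{5/3}+y)$; both close below the dissipation. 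More substantively, the paper only sketches the construction: since $F(u)=-(u\cdot\nabla)u-\lambda\Delta u-\Delta^2 u$ loses four derivatives, its appeal to Picard--Lindel\"of cannot by itself produce the $H^m$ solution and is used there only to obtain $\partial_t u\in C([0,T];H^{m-4})$; your Galerkin-plus-Aubin--Lions scheme supplies precisely the existence mechanism the paper leaves implicit. Your endgame (continuity in $H^m$ from $u\in L^2H^{m+2}$ and $u_t\in L^2H^{m-2}$ via the paper's Lemma \ref{2.7}, $C^1$ regularity by reading off the equation, classical regularity from $H^m\hookrightarrow C^4$) matches the paper's, and you additionally prove uniqueness, which the paper does not address.
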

\begin{proof}
We apply $\Lambda^m$ to the  Kuramoto-Sivashinsky equation \eqref{KSE} and take $L^2(\mathbb{T}^d)$-inner products with $\Lambda^m u$
to obtain
\begin{align}
\frac{1}{2} \frac{d}{dt} \lVert u\rVert_{\dot{H}^m}^2 + \lVert u\rVert_{\dot{H}^{m+2}}^2
&= \int_{\mathbb{T}^2} \Lambda^m (u \cdot \nabla) u \cdot \Lambda^m u \, dx + \lVert u\rVert_{\dot{H}^{m+1}}^2 
\nonumber\\
&\le \lVert \Lambda^m(u \cdot \nabla)u\rVert_{L^2} \, \lVert \Lambda^m u\rVert_{L^2} + \lVert u\rVert_{\dot{H}^{m+1}}^2
\nonumber\\
&\le \lVert \Lambda^m u\rVert_{L^2} \, \lVert \nabla u\rVert_{L^\infty} \, \lVert \Lambda^m u\rVert_{L^2}
+ \lVert u\rVert_{L^\infty} \, \lVert \nabla \Lambda^m u\rVert_{L^2} \, \lVert \Lambda^m u\rVert_{L^2}+ \lVert u\rVert_{\dot{H}^{m+1}}^2
\nonumber\\
& \le \lVert \Lambda^{m} u\rVert_{L^2} \lVert \Lambda^{m} u\rVert_{L^2} \lVert \Lambda^{m}u\rVert_{L^2}+ \lVert \Lambda^{m} u \rVert_{L^2} \lVert \Lambda^{m+1}u\rVert_{L^2} \lVert \Lambda^{m} u \rVert_{L^2}+ \lVert u\rVert_{\dot{H}^{m}}\lVert u\rVert_{\dot{H}^{m+2}}
\nonumber\\
&\le C \lVert \Lambda^m u\rVert_{L^2}^{3}
+C \lVert \Lambda^m u\rVert_{L^2}^{4}+\frac{1}{4}\lVert \Lambda^{m+2} u\rVert_{L^2}^{2}+C\lVert \Lambda^{m} u\rVert_{L^2}^{2}
+ \frac{1}{4}\lVert \Lambda^{m+2}u\rVert_{L^2}^{2}
\nonumber\\
&\le C ( 1 +\lVert \Lambda^m u\rVert_{L^2}+ \lVert \Lambda^m u\rVert_{L^2}^{2}) \lVert \Lambda^m u\rVert_{L^2}^2
+ \frac{1}{2} \lVert \Lambda^{m+2} u\rVert_{L^2}^2,
\label{3.1}
\end{align}
where we applied H\"older inequality, Lemma \ref{est 500k},  Young's inequality, Sobolev embedding $\dot{H}^m(\mathbb{T}^d) \hookrightarrow L^{\infty}(\mathbb{T}^d)$ with $m>\frac{d}{2}$, and and $H^{s_2} \subset H^{s_1}$ for $s_2 > s_1$.    
From the above computations,  we obtain the following:
\begin{align}
\frac{d}{dt} \lVert \Lambda^m u\rVert_{L^2}^2
\le C ( 1+ \lVert \Lambda^m u\rVert_{L^2} + \lVert \Lambda^m u\rVert_{L^2}^{2}) \lVert \Lambda^m u\rVert_{L^2}^2.
\label{}
\end{align}
We then obtain,
\begin{align}
\lVert \Lambda^m u(t)\rVert_{L^2}^2 \;=\; Z(t)\;\le\;\frac{1+\lVert \Lambda^m u(0)\rVert_{L^2}^2}{\,1-C'(1+\lVert \Lambda^m u(0)\rVert_{L^2}^2)\,t\,}\;-\;1,\quad
0\le t<\frac{1}{C'(1+\lVert \Lambda^m u(0)\rVert_{L^2}^2)}
\label{3.3}.
\end{align}
By Grönwall's inequality, we deduce that 
\begin{align} \label{3.4}
\lVert u(t)\rVert_{\dot{H}^m}^2 \leq \lVert u_0\rVert_{\dot{H}^m}^2 \exp ( C \int_0^t ( 1+ \lVert u(\tau)\rVert_{\dot{H}^m} + \lVert u(\tau)\rVert_{\dot{H}^m}^2) d\tau).
\end{align}
In view of inequality \eqref{3.3}, the right-hand side of \eqref{3.4} is finite for all $0 \leq t < (C'(1+\lVert \Lambda^m u(0)\rVert_{L^2}^2))^{-1}$. We therefore
$$
u \in L^\infty([0, T); H^m(\mathbb{T}^d)) \cap L^2([0, T); H^{m+2}(\mathbb{T}^d)),
$$ from \eqref{3.1} with the additional $L^2$-estimate.
By using $u\in L^2([0,T);H^{m+2}])$, we obtain $u_t \in L^2(0,T;H^{m-2})$. Then, together with Lemma \ref{2.7}, we immediately obtain:
$$
u \in C([0, T]; H^m).
$$
Moreover, it is straightforward to verify that the nonlinear operator
$$
F(u) := - (u \cdot \nabla) u - \Delta u - \Delta^2 u,
$$
is locally Lipschitz from $H^m$ into $H^{m-4}$ with $L^{\infty}(\mathbb{T}^d) \hookrightarrow H^m(\mathbb{T}^d)\, \textrm{if}\; m>\frac{d}{2}$. Therefore, 
it follows from Picard-Lindel\"of Theorem, e.g., see \cite[Theorem~3.1]{MB}, in Banach Spaces that
$$
\partial_t u \in C([0, T]; H^{m-4}),$$  implying
$$
u \in C^1([0, T]; H^{m-4}).
$$
\end{proof}
Based on this result, we prove that $u$ is Sobolev regular, i.e., that $u \in H^k(D \times [0,T])$ for some $k \in \mathbb{N}$, provided that $m$ is large enough.

For the reader’s convenience, we recall the several following classical results that are used throughout the manuscript.
\subsection{A few  Classical results}
% Lemma statement
% The following lemma is a particular case of a general theorem of interpolation of Lions–Magenes:
\begin{proposition}
\label{Prop 2.6}
Let $X$ Banach space and suppose $u, w \in L^1(0, T; X)$. Then the following are equivalent:
\begin{enumerate}
\item $\partial_t u = w$ in the weak sense;

\item There exists $\xi \in X$ such that, for a.e. $t \in (0, T)$,
$$
u(t) = \xi + \int_0^t w(s)\, ds.
$$
\item For every $v \in X',$ in the weak sense it holds that
$$
\frac{d}{dt}(u, v) = (w, v).
$$
\end{enumerate}
Moreover, if one (and thus all) of these conditions holds, then $u$ can be altered on a nullset of times so that it belongs to $C([0, T]; X)$.
\end{proposition}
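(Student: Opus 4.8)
The plan is to prove the three statements equivalent by establishing $(1)\Leftrightarrow(2)$, $(2)\Rightarrow(3)$, and $(3)\Rightarrow(1)$, treating $(1)\Rightarrow(2)$ as the analytic heart and deducing the rest, together with the continuity assertion, by softer duality and measure-theoretic arguments. Throughout, $\partial_t u = w$ in the weak sense means $\int_0^T u(t)\,\phi'(t)\,dt = -\int_0^T w(t)\,\phi(t)\,dt$ in $X$ for every scalar test function $\phi \in C_c^\infty(0,T)$, the integrals being understood in the Bochner sense.

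First I would isolate the core lemma: if $g \in L^1(0,T;X)$ satisfies $\int_0^T g(t)\,\phi'(t)\,dt = 0$ for all $\phi \in C_c^\infty(0,T)$, then $g$ equals a constant $\xi \in X$ almost everywhere. To prove it, fix $\eta \in C_c^\infty(0,T)$ with $\int_0^T \eta = 1$ and set $\xi := \int_0^T g(t)\,\eta(t)\,dt$. Given any $\psi \in C_c^\infty(0,T)$, the function $\phi(t) := \int_0^t\big(\psi(s) - (\int_0^T\psi)\,\eta(s)\big)\,ds$ again lies in $C_c^\infty(0,T)$ --- compact support holds because the integrand has zero total mass, forcing $\phi$ to vanish near both endpoints --- and $\phi' = \psi - (\int_0^T\psi)\,\eta$. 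Substituting into the hypothesis yields $\int_0^T\big(g(t)-\xi\big)\psi(t)\,dt = 0$ for every $\psi$, and the vector-valued fundamental lemma of the calculus of variations forces $g = \xi$ a.e.

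With this in hand, $(1)\Rightarrow(2)$ follows by applying the lemma to $g(t) := u(t) - \int_0^t w(s)\,ds$: the map $t\mapsto\int_0^t w$ has weak derivative $w$ by a Fubini computation on $\int_0^T\int_0^t w(s)\,\phi'(t)\,ds\,dt$, so $g$ has vanishing weak derivative, hence $u(t) = \xi + \int_0^t w(s)\,ds$ a.e. The converse $(2)\Rightarrow(1)$ is the same Fubini identity read in reverse. For $(2)\Rightarrow(3)$ I would pair the representation with an arbitrary $v \in X'$, obtaining the scalar absolutely continuous identity $(u(t),v) = (\xi,v) + \int_0^t(w(s),v)\,ds$, whose scalar weak derivative is $(w(t),v)$. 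Finally $(3)\Rightarrow(1)$ is where duality enters: the scalar identities say $\big(\int_0^T u\,\phi' + \int_0^T w\,\phi,\;v\big) = 0$ for all $v\in X'$ and all $\phi$, and since $X'$ separates points of $X$ by Hahn--Banach, the two $X$-valued Bochner integrals coincide, which is precisely $(1)$.

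The continuity conclusion is then immediate from representation $(2)$: the Bochner integral $t\mapsto\int_0^t w(s)\,ds$ is continuous into $X$ because $\big\|\int_{t_1}^{t_2} w\big\|_X \le \int_{t_1}^{t_2}\|w\|_X\,ds \to 0$ by absolute continuity of the Lebesgue integral, so $\tilde u(t):=\xi+\int_0^t w$ is a continuous representative agreeing with $u$ off a null set. I expect the main obstacle to be the core lemma --- specifically verifying that the constructed $\phi$ is a genuine compactly supported test function and invoking the Bochner-valued fundamental lemma; once that vector-valued du Bois-Reymond step is secured, the remaining implications are formal.
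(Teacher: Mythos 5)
Your proof is correct. The paper itself offers no proof of Proposition \ref{Prop 2.6} --- it is recorded in the appendix as a classical fact (it is the standard lemma on vector-valued weak derivatives, cf.\ Temam \cite{61}, Ch.~III, Lemma~1.1) --- and your argument (reduction to the constant-weak-derivative lemma via the du Bois-Reymond primitive construction, the Fubini identity showing $t \mapsto \int_0^t w(s)\,ds$ has weak derivative $w$, Hahn--Banach separation for $(3)\Rightarrow(1)$, and absolute continuity of the Bochner integral for the continuous representative) is exactly the standard textbook proof, with the one invoked ingredient, the vector-valued fundamental lemma, itself routine by mollification or by applying the scalar lemma along a countable norming subset of $X'$ on the a.e.\ separable range of $g$.
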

\begin{lemma}[see \cite{61}]
\label{2.7}
Let $V, H, V'$ be three Hilbert spaces with $V'$ being the dual of $V$. If a function $u$ belongs to $L^2(0,T;V)$ and its derivative $u'$ belongs to $L^2(0,T;V')$, then $u$ is almost everywhere equal to a function continuous from $[0,T]$ into $H$ and we have the following equality, which holds in the scalar distribution sense on $(0,T)$:
\[
\frac{d}{dt} |u|^2 = 2 \langle u', u \rangle.
\]
The equality above is meaningful since the functions
\[
t \mapsto |u(t)|^2, \quad t \mapsto \langle u'(t), u(t) \rangle,
\]
are both integrable on $[0,T]$.
\end{lemma}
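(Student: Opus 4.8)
The plan is to reduce the statement to the case of functions that are smooth in time and then pass to the limit by a mollification argument, following the classical approach of Lions--Magenes and Temam (this is the content cited as \cite{61}). The point is that the identity $\frac{d}{dt}|u|^2 = 2\langle u', u\rangle$ cannot be obtained by differentiating $t\mapsto |u(t)|^2$ directly, because $u'\in L^2(0,T;V')$ only provides a duality pairing against $V$; the regularization is precisely what turns this formal computation into a rigorous one. First I would dispose of the integrability claims, which are immediate. By the duality estimate and Cauchy--Schwarz,
\[
\int_0^T |\langle u'(t), u(t)\rangle|\, dt \le \int_0^T \|u'(t)\|_{V'}\,\|u(t)\|_V\, dt \le \|u'\|_{L^2(0,T;V')}\,\|u\|_{L^2(0,T;V)} < \infty,
\]
and $t\mapsto |u(t)|^2$ is integrable since $V\hookrightarrow H$ gives $u\in L^2(0,T;H)$.

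Next I would extend $u$ to a function $\tilde u$ on an open neighborhood of $[0,T]$ in $\mathbb{R}$, retaining $\tilde u\in L^2(V)$ and $\tilde u'\in L^2(V')$ (for instance by reflection across the endpoints followed by a smooth cutoff), and mollify in time: set $u_n := \rho_{1/n}*\tilde u$ with $\rho_{1/n}$ a standard mollifier. Then each $u_n$ is smooth in $t$ with values in $V$, one has $u_n' = \rho_{1/n}*\tilde u'$, and $u_n\to u$ in $L^2(0,T;V)$ while $u_n'\to u'$ in $L^2(0,T;V')$. For such smooth functions the chain rule is elementary: since $u_n'(t)\in V\subset H$, the $V'$--$V$ pairing collapses to the $H$-inner product, so
\[
|u_n(t)|^2 - |u_n(s)|^2 = 2\int_s^t \langle u_n'(\tau), u_n(\tau)\rangle\, d\tau.
\]

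I would then apply this identity to the difference $u_n-u_m$ and integrate the resulting equality in $s$ over $[0,T]$; using the convergence of $u_n$ and $u_n'$ this shows that $\{u_n\}$ is Cauchy in $C([0,T];H)$. Its uniform limit is a continuous $H$-valued representative of $u$, agreeing with $u$ almost everywhere because $u_n\to u$ in $L^2(0,T;H)$. Finally, passing to the limit $n\to\infty$ in the displayed integral identity yields
\[
|u(t)|^2 - |u(s)|^2 = 2\int_s^t \langle u'(\tau), u(\tau)\rangle\, d\tau
\]
for all $s,t\in[0,T]$, which is exactly the asserted equality $\frac{d}{dt}|u|^2 = 2\langle u', u\rangle$ in the scalar distribution sense on $(0,T)$.

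The hard part will be the construction of the time extension $\tilde u$ that simultaneously preserves the $L^2(V)$-regularity of $u$ and the $L^2(V')$-regularity of $u'$, together with the verification that mollification commutes with the distributional time derivative, i.e. $u_n' = \rho_{1/n}*u'$ on the interior. Once these structural points are secured, the Cauchy estimate in $C([0,T];H)$ and the limit passage are routine, and the whole argument rests on the Gelfand-triple identification that makes $\langle u_n'(t), u_n(t)\rangle$ coincide with $(u_n'(t), u_n(t))_H$ at the smooth level.
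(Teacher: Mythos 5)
Your proposal is correct, and it coincides with the proof of this lemma in the cited reference: the paper itself gives no proof (the statement is quoted from Temam \cite{61}, Ch.~III, Lemma~1.2), and the argument there is exactly your scheme --- extend in time, mollify, prove the identity at the smooth level where the $V'$--$V$ pairing collapses to the $H$-inner product, deduce that $\{u_n\}$ is Cauchy in $C([0,T];H)$, and pass to the limit. The only point worth making explicit is the Gelfand-triple hypothesis ($V\hookrightarrow H$ densely and continuously, $H$ identified with $H'$), which the paper's abbreviated statement suppresses but which both your argument and the reflection step (using $u\in H^1(0,T;V')\hookrightarrow C([0,T];V')$ to rule out a jump at the endpoints) require.
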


\begin{lemma} \label{est 500k}
For any \( f \in \dot{H}^1(\mathbb{T}^d) \cap \dot{H}^3(\mathbb{T}^d) \), the following inequality holds:
\begin{align*} \label{}
\lVert f\rVert_{\dot{H}^2(\mathbb{T}^d)}^2 \leq \left( \int_{\mathbb{T}^d} |\xi|^6 |\hat{f}(\xi)|^2 \, d\xi \right)^{1/2} \left( \int_{\mathbb{T}^d} |\xi|^2 |\hat{f}(\xi)|^2 \, d\xi \right)^{1/2}
= \|f\|_{\dot{H}^3} \|f\|_{\dot{H}^1}.
\end{align*}
\end{lemma}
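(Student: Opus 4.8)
The plan is to recast the inequality on the Fourier side and recognize it as the Cauchy--Schwarz inequality, i.e.\ as the statement that $\dot H^2$ is the interpolation midpoint between $\dot H^1$ and $\dot H^3$. By Plancherel's identity together with the definition of the homogeneous Sobolev norm recalled in Section~\ref{sec.2}, one has
\begin{align*}
\|f\|_{\dot H^s}^2 = \sum_{k\in\mathbb{Z}^d} |k|^{2s}\,|\hat f(k)|^2,
\end{align*}
and since $f \in \dot H^1 \cap \dot H^3$ both the $s=1$ and $s=3$ sums are finite; because $f$ lives in a homogeneous space of mean-zero distributions we have $\hat f(0)=0$, so the $k=0$ term is absent and the weights $|k|^{2s}$ never meet the origin.

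First I would exploit the elementary algebraic identity $|k|^4 = (|k|^6)^{1/2}(|k|^2)^{1/2}$, which factors the $\dot H^2$ summand as
\begin{align*}
|k|^4\,|\hat f(k)|^2 = \bigl(|k|^6\,|\hat f(k)|^2\bigr)^{1/2}\bigl(|k|^2\,|\hat f(k)|^2\bigr)^{1/2}.
\end{align*}
Summing over $k$ and applying the Cauchy--Schwarz inequality to the sequences $a_k = |k|^6|\hat f(k)|^2$ and $b_k=|k|^2|\hat f(k)|^2$ then gives
\begin{align*}
\|f\|_{\dot H^2}^2 = \sum_k a_k^{1/2} b_k^{1/2} \le \Bigl(\sum_k a_k\Bigr)^{1/2}\Bigl(\sum_k b_k\Bigr)^{1/2} = \|f\|_{\dot H^3}\,\|f\|_{\dot H^1},
\end{align*}
which is exactly the asserted bound; the integral notation appearing in the statement is just the Plancherel reformulation of these sums.

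There is no substantive obstacle here. The only points that merit a word of care are that the right-hand side is finite and well defined, which is guaranteed by the hypothesis $f\in\dot H^1\cap\dot H^3$, and the harmless absence of the zero frequency noted above. Equivalently, one may present the whole argument as H\"older's inequality in frequency with conjugate exponents $2$ and $2$, or as the $\theta=\tfrac12$ instance of the log-convexity estimate $\|f\|_{\dot H^{(1-\theta)s_0+\theta s_1}} \le \|f\|_{\dot H^{s_0}}^{1-\theta}\|f\|_{\dot H^{s_1}}^{\theta}$ with $s_0=1$ and $s_1=3$.
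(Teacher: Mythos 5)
Your proof is correct and coincides with the argument the paper intends: the displayed middle expression in the lemma (the product of the two frequency integrals) is exactly the Cauchy--Schwarz factorization $|k|^4 = |k|^3\cdot|k|$ on the Fourier side that you carry out, so your Plancherel-plus-Cauchy--Schwarz interpolation is the same route. Your added remarks on finiteness and the vanishing zero mode are accurate and harmless.
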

\begin{lemma}[Sobolev embedding Theorem, see \cite{MB}]  
\label{2.8K}
The space \( H^{s+k}(\mathbb{T}^d) \), for \( s > N/2 \), \( k \in \mathbb{Z}_+ \cup \{0\} \) is continuously embedded in the space \( C^k(\mathbb{T}^d) \), and there exists a constant \( c > 0 \) such that  
\[ |v|_{C^k} \leq c_k \|v\|_{H^{s+k}}, \quad \text{for any } v \in H^{s+k}(\mathbb{T}^d).
\]
\end{lemma}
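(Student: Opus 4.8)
The plan is to argue entirely via Fourier series on the torus, reducing the $C^k$-estimate to the summability of a single multiplier and then upgrading the resulting $L^\infty$-bound to genuine continuity by absolute uniform convergence. Since membership in $C^k(\mathbb{T}^d)$ means that every derivative $\partial^\alpha v$ with $|\alpha|\le k$ is continuous and uniformly bounded, it suffices to control $\lVert \partial^\alpha v\rVert_{L^\infty}$ by $\lVert v\rVert_{H^{s+k}}$ for each such multi-index, uniformly in $x$. First I would expand $v(x)=\sum_{\xi\in\mathbb{Z}^d}\hat v(\xi)\,e^{i\xi\cdot x}$ and note that the Fourier coefficients of $\partial^\alpha v$ are $(i\xi)^\alpha\hat v(\xi)$, so that pointwise $|\partial^\alpha v(x)|\le \sum_{\xi}|\xi|^{|\alpha|}\,|\hat v(\xi)|$ for every $x\in\mathbb{T}^d$.

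The central step is a Cauchy--Schwarz splitting against the Bessel weight appearing in the $H^{s}$-norm defined above. Writing
\[
\sum_{\xi}|\xi|^{|\alpha|}\,|\hat v(\xi)|
= \sum_{\xi}\frac{|\xi|^{|\alpha|}}{\big(1+|\xi|^{2(s+k)}\big)^{1/2}}\,
\big(1+|\xi|^{2(s+k)}\big)^{1/2}\,|\hat v(\xi)|,
\]
Cauchy--Schwarz yields
\[
\sum_{\xi}|\xi|^{|\alpha|}\,|\hat v(\xi)|
\le \Big(\sum_{\xi}\frac{|\xi|^{2|\alpha|}}{1+|\xi|^{2(s+k)}}\Big)^{1/2}\,
\lVert v\rVert_{H^{s+k}},
\]
where the second factor is exactly the $H^{s+k}$-norm in the convention of Section~\ref{sec.2}. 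It then remains to show that the first factor is a finite constant $c_k=c_k(s,k,d)$, independent of $v$.

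For the summability I would use that, since $|\alpha|\le k$, one has $|\xi|^{2|\alpha|}\le 1+|\xi|^{2k}$, so that for $\xi\neq 0$ the summand is dominated, up to a fixed multiple, by $|\xi|^{-2s}$ as $|\xi|\to\infty$. Comparing the lattice sum $\sum_{\xi\in\mathbb{Z}^d\setminus\{0\}}|\xi|^{-2s}$ with the integral $\int_{|\eta|\ge 1}|\eta|^{-2s}\,d\eta$ shows that it converges precisely when $2s>d$, i.e.\ $s>d/2$, which is the hypothesis; hence $c_k<\infty$ and $\lVert \partial^\alpha v\rVert_{L^\infty}\le c_k\lVert v\rVert_{H^{s+k}}$, and maximizing over $|\alpha|\le k$ gives $|v|_{C^k}\le c_k\lVert v\rVert_{H^{s+k}}$, which is the asserted continuous embedding. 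The same finiteness shows each series $\sum_{\xi}(i\xi)^\alpha\hat v(\xi)e^{i\xi\cdot x}$ converges absolutely and uniformly, so by the Weierstrass $M$-test its limit $\partial^\alpha v$ is continuous, placing $v$ in $C^k(\mathbb{T}^d)$. I expect the only genuine obstacle to be the summability threshold: it is exactly the dimensional condition $s>d/2$ that renders the lattice sum convergent, and once that is settled the remainder is bookkeeping with the Bessel weight.
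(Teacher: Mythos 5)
Your proof is correct. The paper states this lemma without proof, citing Majda--Bertozzi \cite{MB}, so there is no in-paper argument to compare against; your Fourier-series proof is the canonical one for the periodic setting and is complete: the Cauchy--Schwarz splitting against the weight $\bigl(1+|\xi|^{2(s+k)}\bigr)^{1/2}$ matches the paper's $H^{s+k}$-norm convention exactly, the lattice-sum/integral comparison correctly identifies $2s>d$ as the convergence threshold (note the statement's ``$s>N/2$'' is evidently a typo for $s>d/2$, which you read correctly), and the Weierstrass $M$-test step properly upgrades the $L^\infty$-bound to continuity of each $\partial^\alpha v$, so that the weak derivatives admit continuous representatives and $v\in C^k(\mathbb{T}^d)$.
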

\begin{lemma}[see \cite{BA}]
\label{lemma_3.12k}
Let $d \in \mathbb{N}$, $1 \le s < r < \infty$, and $\beta > 0$ such that
\[
\|g\|_{L^r(\mathbb{T}^d)}
\le C\,
\|g\|_{\dot{B}^{-\beta}_{\infty,\infty}(\mathbb{T}^d)}^{\,1-\alpha}
\|g\|_{\dot{B}^{\gamma}_{s,s}(\mathbb{T}^d)}^{\,\alpha},
\]
where
\[
\alpha= \frac{s}{r}, 
\qquad 
\gamma= \beta\!\left(\frac{r}{s} - 1\right).
\]
\end{lemma}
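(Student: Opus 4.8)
The plan is to recognize Lemma~\ref{lemma_3.12k} as a refined (Gérard–Meyer–Oru type) Gagliardo–Nirenberg inequality, in which the $L^r$-norm is interpolated between a negative-regularity Besov norm and a positive one, and to prove it by combining the Littlewood–Paley decomposition of \eqref{2B} with a distribution-function (layer-cake) argument. First I would verify that the stated exponents are scale-critical: under the dilation $g\mapsto g(\lambda\,\cdot)$ both sides carry the same power of $\lambda$ precisely when $\alpha=s/r$ and $\gamma=\beta(r/s-1)$, which simultaneously fixes the exponents and signals that no derivative-counting slack is available. By homogeneity I may then normalize $\lVert g\rVert_{\dot{B}^{-\beta}_{\infty,\infty}}=\lVert g\rVert_{\dot{B}^{\gamma}_{s,s}}=1$ and reduce the claim to a uniform bound $\lVert g\rVert_{L^r}\le C$.

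Writing $\Delta_j g=\varphi_j*g$ for the dyadic blocks, the definitions of the two norms give $\lVert \Delta_j g\rVert_{L^\infty}\le 2^{\beta j}$ and $\lVert \Delta_j g\rVert_{L^s}=2^{-\gamma j}d_j$, where $\{d_j\}\in\ell^s$ has unit norm. For a height $\mu>0$ I would split $g=g_{\le J}+g_{>J}$ at a frequency $J=J(\mu)$ chosen so that the low part is harmless, namely $\lVert g_{\le J}\rVert_{L^\infty}\lesssim\sum_{j\le J}2^{\beta j}\lesssim 2^{\beta J}\sim\mu$, that is $2^{J}\sim\mu^{1/\beta}$. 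Then $\{|g|>\mu\}\subset\{|g_{>J}|>\mu/2\}$, so Chebyshev's inequality in $L^s$ together with $\lVert g_{>J}\rVert_{L^s}\lesssim 2^{-\gamma J}\bigl(\sum_{j>J}d_j^s\bigr)^{1/s}$ (triangle inequality and Hölder against the geometric weight, using $\gamma>0$) yields $|\{|g|>\mu\}|\lesssim \mu^{-s}\,2^{-\gamma Js}\sum_{j>J}d_j^s$. Substituting $2^{J}\sim\mu^{1/\beta}$ and using the identity $\gamma s/\beta=r-s$ collapses the explicit $\mu$-powers to $\mu^{-r}$, which is exactly the weak-type (Lorentz $L^{r,\infty}$) form of the desired inequality; the cancellation of the $2^{J}$ factors is the analytic shadow of the scaling criticality observed above.

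The main obstacle is upgrading this weak-type bound to the strong $L^r$ norm, since the layer-cake integral $\int_0^\infty\mu^{r-1}|\{|g|>\mu\}|\,d\mu$ is only logarithmically divergent under the estimate just obtained. This is precisely where the finite third index $s<\infty$ of $\dot{B}^{\gamma}_{s,s}$, as opposed to $\dot{B}^{\gamma}_{s,\infty}$, is essential: retaining the \emph{tail} $\sum_{j>J(\mu)}d_j^s$ rather than the full norm and summing the heights dyadically recovers the missing summability and removes the logarithm. On the torus the complementary small-height regime is controlled trivially by $|\{|g|>\mu\}|\le|\mathbb{T}^d|$, since the homogeneous blocks $\varphi_j*g$ vanish below a fixed frequency, so only the high-frequency tail must be treated with care. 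I expect the bookkeeping in this weak-to-strong step — balancing the dyadic height sum against the $\ell^s$ summability of $\{d_j\}$ — to be the delicate part, and it is exactly the content of the cited estimate in \cite{BA}, to which the remaining routine computations may be deferred.
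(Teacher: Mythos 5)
The paper itself gives no proof of this lemma --- it only remarks that the proof follows \cite{BA} using toroidal blocks --- so the relevant comparison is with the standard argument for this G\'erard--Meyer--Oru type inequality in \cite{BA}, and your outline does follow the right strategy: split $g$ at a frequency $2^{J}\sim \mu^{1/\beta}$, control the low part in $L^\infty$ by the $\dot B^{-\beta}_{\infty,\infty}$ norm so that $\{|g|>\mu\}\subset\{|g_{>J}|>\mu/2\}$, apply Chebyshev to the high part, and integrate over heights. However, your weak-to-strong step contains a genuine gap. After H\"older you retain only the bare tail, $|\{|g|>\mu\}|\lesssim \mu^{-r}\sum_{j>J(\mu)}d_j^s$, and assert that summing the heights dyadically ``recovers the missing summability.'' It does not: with $\mu\sim 2^{\beta J}$ the layer-cake integral becomes $\sum_{J}\sum_{j>J}d_j^s=\sum_j \#\{J: J<j\}\,d_j^s$, which diverges. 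Even after invoking the toroidal structure (the blocks vanish for $j\lesssim -1$, and $|\{|g|>\mu\}|\le (2\pi)^d$ handles small $\mu$), you are left with a quantity comparable to $\sum_j j\,d_j^s$, which is \emph{not} controlled by $\|d\|_{\ell^s}=1$: take $d_j^s\sim (j\log^2 j)^{-1}$. So the finite third index $s$ alone, used the way you use it, does not close the estimate; the tails $\sum_{j>J}d_j^s$ tend to zero but not summably in $J$.

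The repair --- and this is precisely the mechanism in the proof you defer to in \cite{BA} --- is to \emph{not} H\"older off the tail at all. Keep the weighted sum $\|g_{>J}\|_{L^s}\le\sum_{j>J}2^{-\gamma j}d_j$; then Chebyshev and the dyadic height sum, together with the identity $\beta(r-s)=\gamma s$ that you already verified, produce $\sum_{J}\bigl(\sum_{j>J}2^{-\gamma (j-J)}d_j\bigr)^{s}$. This is the $\ell^s$ norm (to the power $s$) of the discrete convolution of $d\in\ell^s$ with the kernel $a_k=2^{-\gamma k}\mathbf{1}_{k\ge 1}\in\ell^1$, so Young's inequality $\ell^1*\ell^s\hookrightarrow\ell^s$ gives the bound $\|a\|_{\ell^1}^s\|d\|_{\ell^s}^s\le C_\gamma$, with no logarithm and no loss; the geometric factor $2^{-\gamma(j-J)}$ that your H\"older step discards is exactly what makes each $d_j^s$ countable only $O(1)$ times. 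A secondary point: your normalization $\|g\|_{\dot B^{-\beta}_{\infty,\infty}}=\|g\|_{\dot B^{\gamma}_{s,s}}=1$ via the dilation $g\mapsto g(\lambda\,\cdot)$ is not available on $\mathbb{T}^d$ (dilations do not preserve the torus); one should instead either track both norms and choose the threshold $2^{\beta J}\sim \mu/\|g\|_{\dot B^{-\beta}_{\infty,\infty}}$, or note that amplitude scaling $g\mapsto \lambda g$ fixes only one of the two norms, with the split frequency then absorbing the ratio. With these two corrections your argument becomes the standard proof.
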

The proof is similar to that in $\mathbb{R}^d$ using toroidal blocks; see \cite{BA} for details.
\begin{theorem}[see \cite{DR}]
\label{2.10CT}Let $d, n \ge 2, \ m \ge 3, \ \delta > 0, \ a_i, b_i \in \mathbb{Z}$ with $a_i < b_i$ for $1 \le i \le d$, 
$\Omega = \prod_{i=1}^d [a_i, b_i] $ and $f \in H^m(\Omega)$.
Then for every $N \in \mathbb{N}$ with $N > 5$, there exists a tanh neural network $\widehat{f}^N$ with two hidden layers, one of width at most
$$
3 \left\lceil \frac{m+n-2}{2} \right\rceil |P_{m-1, d+1}| + \sum_{i=1}^d (b_i - a_i)(N-1),
$$
and another of width at most
$$
3 \left\lceil \frac{d+n}{2} \right\rceil |P_{d+1, d+1}| N^d \prod_{i=1}^d (b_i - a_i),
$$
such that for $k \in \{0, 1, 2\}$ it holds that
$$
\| f - \widehat{f}^N \|_{H^k(\Omega)} \le 2^k 3^d \, C_{k,m,d,f} \,(1+\delta)\, \ln^k\!\left( \beta_{k,\delta,d,f} \, N^{d+m+2} \right) N^{-m+k},
$$
and where we define
$$
\beta_{k,\delta,d,f} =
\frac{5 \cdot 2^k \, \max\{\prod_{i=1}^d (b_i-a_i), \, d\} \, \max\{\|f\|_{W^{k,\infty}(\Omega)}, 1\}}
     {3^d \delta \min\{1, \, C_{k,m,d,f}\}},
$$
$$
C_{k,m,d,f} = \max_{0 \le \ell \le k}
\left( \frac{d+\ell-1}{\ell} \right)^{1/2}
\frac{((m-\ell)!)^{1/2}}{\left( \frac{m-\ell}{d} ! \right)^{1/2}}
\left( \frac{3\sqrt{d}}{\pi} \right)^{m-\ell} \, |f|_{H^m}.
$$
Moreover, the weights of $\widehat{f}^N$ scale as $O(N \ln(N) + N^\gamma)$ with
$\gamma = \max\{ m^2, \, d(2+m+d) \} / n$.
\end{theorem}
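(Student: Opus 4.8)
The plan is to decouple the claim into a classical approximation step and a network-emulation step, tracking all Sobolev norms up to order $k\in\{0,1,2\}$ simultaneously. Write $\sigma=\tanh$ and recall the two elementary gadgets that drive every $\tanh$-approximation result of this type. Since $\sigma$ is smooth and non-affine, a second-order finite difference of $\sigma$ about a base point where $\sigma''\neq0$ reproduces the quadratic map $x\mapsto x^2$ to arbitrary accuracy, and by polarization the product $(x,y)\mapsto xy$; iterating emulates any monomial and hence any polynomial. Separately, $\tfrac12\bigl(1+\sigma(\alpha(x_i-c))\bigr)$ approximates the Heaviside step with transition width $1/\alpha$, so differences of such functions approximate indicators of coordinate slabs. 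These two gadgets are what the two hidden layers will assemble.

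First I would carry out the approximation step. Partition $\Omega=\prod_{i=1}^d[a_i,b_i]$ into the $N^d\prod_i(b_i-a_i)$ congruent subcubes of side $1/N$, and on each cell replace $f$ by an averaged Taylor (Bramble--Hilbert) polynomial $p$ of degree $\le m-1$. The Bramble--Hilbert lemma gives the cell-local bound $\lVert f-p\rVert_{H^k(\mathrm{cell})}\lesssim N^{-(m-k)}\lvert f\rvert_{H^m(\mathrm{cell})}$ for $0\le k\le m$, and summing in $\ell^2$ over cells produces the global rate $N^{-m+k}$. The constant $C_{k,m,d,f}$ in the statement is the output of this bookkeeping: the factor $\binom{d+\ell-1}{\ell}^{1/2}$ counts the multi-indices of order $\ell\le k$, while the factorial ratio and the $(3\sqrt d/\pi)^{m-\ell}$ term bound the remainder by $\lvert f\rvert_{H^m}$ in the chosen basis, and $|P_{m-1,d+1}|$ is the number of monomials that the first hidden layer must carry.

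Next I would emulate this piecewise polynomial by a two-hidden-layer network. The first hidden layer simultaneously (a) builds the $\sum_i(b_i-a_i)(N-1)$ one-dimensional knot functions $\sigma(\alpha(x_i-c))$ that localize to cells, and (b) builds the $\sim|P_{m-1,d+1}|$ monomial gadgets, the factor $3\lceil(m+n-2)/2\rceil$ being the per-monomial cost of the divided-difference construction. The second hidden layer then forms, for each of the $N^d\prod_i(b_i-a_i)$ cells, the $(d+1)$-fold product of the $d$ directional bump functions with the local polynomial value---realized by iterated multiplication gadgets, whence the width factor $|P_{d+1,d+1}|$ and the prefactor $3\lceil(d+n)/2\rceil$---and sums these localized contributions to produce $\widehat f^N$. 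The free integer $n\ge2$ controls how finely each gadget is resolved, trading the two width factors against the weight exponent and fixing $\gamma=\max\{m^2,d(2+m+d)\}/n$.

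The main obstacle, and the origin of the $\ln^k(\beta_{k,\delta,d,f}N^{d+m+2})$ factors, is controlling the emulation error in $H^k$ rather than merely in $L^\infty$. Each $\tanh$ gadget carries a sharpness parameter, and because the $\ell$-th derivative of $\sigma(\alpha\cdot)$ grows like $\alpha^\ell$, the transition regions of the localizing steps and the remainders of the multiplication gadgets contribute large high-order derivatives. To force their total $H^k$-contribution below the polynomial error $N^{-(m-k)}$, one chooses the gadget parameters so that the per-gadget accuracy dominates the aggregate of the $N^d$ cells, the polynomial magnitudes, and the $\delta^{-1}$ safety margin; this aggregate enters only logarithmically, as $\ln(\beta_{k,\delta,d,f}N^{d+m+2})$, and each of the at most $k$ differentiations contributes one such factor, producing the $\ln^k$ power together with the $2^k3^d(1+\delta)$ prefactor, while the actual weight magnitudes remain $O(N\ln N+N^\gamma)$. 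Assembling $\lVert f-\widehat f^N\rVert_{H^k}\le\lVert f-p\rVert_{H^k}+\lVert p-\widehat f^N\rVert_{H^k}$ then yields the claim. I expect the genuinely delicate part to be the derivative-by-derivative product-rule accounting across the two layers: each localized term is a product of $d+1$ factors that must be differentiated up to order $k$ without any $\alpha^\ell$ factor swamping the gained $N^{-(m-k)}$, and it is precisely the logarithmic---rather than polynomial---survival of the sharpness parameter through this accounting that makes the stated rate sharp.
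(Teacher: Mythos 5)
This theorem is imported verbatim from De Ryck--Lanthaler--Mishra \cite{DR}; the paper you were given offers no proof of it (it is stated with a citation in the Appendix), so the only meaningful comparison is with that reference, and your sketch reconstructs its argument essentially faithfully: Bramble--Hilbert averaged-Taylor approximation on a grid of $N^d\prod_i(b_i-a_i)$ cells giving the $N^{-m+k}$ rate and the constant $C_{k,m,d,f}$, tanh finite-difference gadgets for monomials/products (correctly taken at a base point with $\sigma''\neq 0$, since $\tanh''(0)=0$), shifted-sigmoid approximate partitions of unity, the two-hidden-layer assembly matching the stated widths, and logarithmically chosen steepness parameters whose $k$ differentiations produce the $\ln^k(\beta N^{d+m+2})$ factors. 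This is the same approach as the cited proof, so no further comparison is needed.
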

\begin{lemma}[Multiplicative trace inequality, see \cite{HIp}]  \label{MTI}
Let $d \ge 2$, $\Omega \subset \mathbb{R}^d$ be a Lipschitz domain, and let $\gamma_0 : H^1(\Omega) \to L^2(\partial\Omega)$ : $u \mapsto u|_{\partial\Omega}$ be the trace operator. Denote by $h_\Omega$ the diameter of $\Omega$ and by $\rho_\Omega$ the radius of the largest $d$-dimensional ball that can be inscribed into $\Omega$. Then it holds that
$$
\|\gamma_0 u\|_{L^2(\partial\Omega)} \le \sqrt{\frac{2 \max \{ 2 h_\Omega,\, d \}}{\rho_\Omega}} \, \|u\|_{H^1(\Omega)}.
$$
\end{lemma}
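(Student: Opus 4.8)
The plan is to establish the bound first for smooth $u\in C^1(\overline{\Omega})$ and then pass to all of $H^1(\Omega)$ by density of $C^\infty(\overline{\Omega})$ in $H^1(\Omega)$ together with the continuity of the trace operator $\gamma_0$; since the trace and the $H^1$-norm act componentwise on vector-valued $u$, it suffices to treat the scalar case. The engine of the argument is a Rellich-type identity generated by the divergence theorem applied to a radial vector field anchored at the center of the largest inscribed ball. I would let $x_0$ denote the center of a ball $B(x_0,\rho_\Omega)\subset\Omega$ of maximal radius and set $\mathbf{F}(x):=x-x_0$, so that $\operatorname{div}\mathbf{F}=d$ and $|\mathbf{F}(x)|\le h_\Omega$ for every $x\in\Omega$.

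Applying the divergence theorem to the field $u^2\mathbf{F}$ then yields the identity
\[
\int_{\partial\Omega} u^2\,(\mathbf{F}\cdot n)\,ds=\int_{\Omega}\operatorname{div}(u^2\mathbf{F})\,dx=\int_{\Omega}\bigl(d\,u^2+2\,u\,\nabla u\cdot\mathbf{F}\bigr)\,dx,
\]
and two estimates close the proof. On the left-hand side I would invoke the geometric lower bound $\mathbf{F}\cdot n\ge\rho_\Omega$ on $\partial\Omega$: at each boundary point $x$ with outward normal $n$, the supporting hyperplane $\{y:(y-x)\cdot n=0\}$ keeps $\Omega$, and hence the inscribed ball $B(x_0,\rho_\Omega)$, on the side $(y-x)\cdot n\le 0$, so the distance $(x-x_0)\cdot n$ from $x_0$ to that hyperplane is at least $\rho_\Omega$. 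On the right-hand side I would use $|\mathbf{F}|\le h_\Omega$, Cauchy--Schwarz on the cross term, and Young's inequality $2\|u\|_{L^2}\|\nabla u\|_{L^2}\le\|u\|_{L^2}^2+\|\nabla u\|_{L^2}^2=\|u\|_{H^1}^2$, together with $\|u\|_{L^2}^2\le\|u\|_{H^1}^2$.

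Combining the two estimates gives
\[
\rho_\Omega\,\|u\|_{L^2(\partial\Omega)}^2\le d\,\|u\|_{L^2(\Omega)}^2+2h_\Omega\,\|u\|_{L^2(\Omega)}\|\nabla u\|_{L^2(\Omega)}\le (d+h_\Omega)\,\|u\|_{H^1(\Omega)}^2\le 2\max\{2h_\Omega,\,d\}\,\|u\|_{H^1(\Omega)}^2,
\]
where the last inequality follows by checking the two cases $2h_\Omega\ge d$ and $2h_\Omega<d$ separately; dividing by $\rho_\Omega$ and taking square roots then produces exactly the claimed constant $\sqrt{2\max\{2h_\Omega,d\}/\rho_\Omega}$. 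The main obstacle is precisely the geometric lower bound $\mathbf{F}\cdot n\ge\rho_\Omega$: it is transparent for convex $\Omega$ --- in particular for the boxes $D\times[0,T]$ used in Theorems~\ref{Th 4.3} and~\ref{4.4} --- via the supporting-hyperplane property, but for a general Lipschitz domain the normal component of $x-x_0$ may change sign, so one must either restrict to domains that are star-shaped with respect to a ball or replace the single field $\mathbf{F}$ by a smooth field assembled from boundary charts through a partition of unity, which is the technical refinement carried out in the cited reference~\cite{HIp}.
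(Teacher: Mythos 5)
The paper never proves Lemma~\ref{MTI}: it is imported verbatim from \cite{HIp} in the appendix of classical results, so there is no internal argument to compare against, and your reconstruction is in substance the standard proof underlying the cited reference. Your argument is correct where it applies: the Rellich identity for $u^2\mathbf{F}$ with $\mathbf{F}(x)=x-x_0$, the bounds $|\mathbf{F}|\le h_\Omega$ and $\mathbf{F}\cdot n\ge\rho_\Omega$, the Young step, and the case check $(d+h_\Omega)\le 2\max\{2h_\Omega,d\}$ all verify (if $2h_\Omega\ge d$ then $d+h_\Omega\le 3h_\Omega\le 4h_\Omega$; otherwise $d+h_\Omega<\tfrac{3}{2}d\le 2d$), and the density step is sound because applying the smooth-case inequality to differences of approximants makes their boundary restrictions Cauchy in $L^2(\partial\Omega)$, so the bound passes to the limit without presupposing quantitative continuity of $\gamma_0$. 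The one genuine limitation is exactly the one you flag yourself: the lower bound $\mathbf{F}\cdot n\ge\rho_\Omega$ requires $\Omega$ to be star-shaped with respect to the maximal inscribed ball (convexity suffices, via your supporting-hyperplane argument), not merely Lipschitz, so your proof delivers the lemma in slightly less generality than the statement claims; this is immaterial for the paper, since every invocation --- the estimate \eqref{eq:3.12} in Theorem~\ref{3.11K} and the trace steps in Theorems~\ref{Th 4.3} and~\ref{4.4} --- takes $\Omega$ to be an axis-parallel box, hence convex. One small improvement you leave on the table: your intermediate inequality $\rho_\Omega\|u\|_{L^2(\partial\Omega)}^2\le d\|u\|_{L^2(\Omega)}^2+2h_\Omega\|u\|_{L^2(\Omega)}\|\nabla u\|_{L^2(\Omega)}$ already yields the genuinely multiplicative form $\|u\|_{L^2(\partial\Omega)}^2\le \tfrac{2\max\{2h_\Omega,d\}}{\rho_\Omega}\,\|u\|_{L^2(\Omega)}\|u\|_{H^1(\Omega)}$ (bound the right-hand side by $\max\{d,2h_\Omega\}\,\|u\|_{L^2}\bigl(\|u\|_{L^2}+\|\nabla u\|_{L^2}\bigr)\le\sqrt{2}\max\{d,2h_\Omega\}\,\|u\|_{L^2}\|u\|_{H^1}$), from which the stated estimate follows via $\|u\|_{L^2}\le\|u\|_{H^1}$; keeping the multiplicative version costs nothing and is the form that gives the lemma its name.
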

\begin{lemma}[see \cite{DE}] Let $d,n,L,W \in \mathbb{N}$, and let $u_\theta : \mathbb{R}^{d+1} \to \mathbb{R}^{d+1}$ be a neural network with $\theta \in \Theta_{L,W,R}$ for $L \geq 2, R, W \geq 1$ (cf. Definition \ref{4.1N}). Assume that $\|\sigma\|_{C^n} \geq 1$. Then it holds for $1 \leq j \leq d+1$ that 
\begin{align*} 
\|(u_\theta)_j\|_{C^n} \leq 16^L (d+1)^{2n} \Bigl(e^{2}n^{4} W^3 R^n \|\sigma\|_{C^n}\Bigr)^{nL}.
\end{align*}
\label{1KO}
\end{lemma}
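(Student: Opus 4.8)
The plan is to prove the bound by induction on the depth $L$, tracking the growth of each fixed-order partial derivative through the layers rather than estimating the full $C^n$ norm in one step. Write $F_j := f_j^\theta \circ \cdots \circ f_1^\theta$ for the partial composition after $j$ layers (so $F_0 = \mathrm{id}$ and $F_L = u_\theta$; cf. Definition \ref{4.1N}), and let $\psi_j := A_j^\theta \circ F_{j-1} = W_j F_{j-1} + b_j$ be the pre-activation, so that $F_j = \sigma(\psi_j)$ componentwise for $j < L$ and $F_L = \psi_L$. For $0 \le m \le n$ set
\[
D_m^{(j)} := \max_{1 \le i \le l_j}\ \sup_{z}\ \max_{|\alpha| = m}\bigl|\partial^\alpha (F_j)_i(z)\bigr|,
\]
so that $\|(u_\theta)_i\|_{C^n} \le \sum_{m=0}^n D_m^{(L)}$. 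The goal reduces to establishing an ansatz $D_m^{(j)} \le C_m\,K^{mj}$ with a per-layer factor $K$ and combinatorial constants $C_m$ independent of $j$.

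First I would record the effect of a single layer. Since $A_j^\theta$ is affine, $\partial^\alpha \psi_j = W_j\,\partial^\alpha F_{j-1}$ for every $|\alpha|\ge 1$, and because each row of $W_j$ has at most $W$ entries bounded by $R$ we obtain $\|\partial^\alpha \psi_j\|_\infty \le WR\,D_{|\alpha|}^{(j-1)}$; the higher derivatives of the affine map vanish, which is what keeps the estimate tame. Applying the univariate Fa\`a di Bruno formula componentwise to $F_j = \sigma(\psi_j)$ gives, for $|\alpha| = m$,
\[
\partial^\alpha (F_j)_i = \sum_{\pi \in P(\alpha)} \sigma^{(|\pi|)}\bigl((\psi_j)_i\bigr)\prod_{B \in \pi}\partial^{\,b(B)}(\psi_j)_i,
\]
where $P(\alpha)$ is the set of partitions of the multi-index $\alpha$, $|\pi|$ the number of blocks, and $b(B)$ the combined order of a block $B$. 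Bounding $|\sigma^{(|\pi|)}| \le \|\sigma\|_{C^n}$ and each block factor by $WR\,D_{|B|}^{(j-1)}$ yields
\[
D_m^{(j)}\ \le\ \|\sigma\|_{C^n}\,(WR)^{n}\sum_{\pi \in P(\alpha)}\ \prod_{B \in \pi} D_{|B|}^{(j-1)}.
\]

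The decisive point — the reason the bound is only singly exponential in $L$ rather than a tower — is that within each partition the block orders satisfy $\sum_{B \in \pi}|B| = m$. Substituting the inductive hypothesis $D_{|B|}^{(j-1)} \le C_{|B|}K^{|B|(j-1)}$ collapses the product into $\bigl(\prod_B C_{|B|}\bigr)K^{(j-1)\sum_B|B|} = \bigl(\prod_B C_{|B|}\bigr)K^{m(j-1)}$, so the $j$-dependence factors out cleanly and the induction closes provided
\[
C_m\,K^{m}\ \ge\ \|\sigma\|_{C^n}\,(WR)^{n}\,\bigl|P(\alpha)\bigr|\,\max_{\pi \in P(\alpha)}\prod_{B \in \pi}C_{|B|}.
\]
The base case $D_m^{(0)}$ is immediate (derivatives of the identity are $0$ or $1$), and the first-order relation $D_1^{(j)} \le WR\|\sigma\|_{C^n}D_1^{(j-1)}$ already exhibits the clean product growth $K^{L}$ with $K = WR\|\sigma\|_{C^n}$, confirming that the exponent $mj$ is the correct scaling. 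It then remains to choose $K$ of order $WR\,\|\sigma\|_{C^n}$ (so that $K^m$ absorbs the $(WR)^n\|\sigma\|_{C^n}$ on the right, using $W,R,\|\sigma\|_{C^n}\ge 1$), to solve the recursive inequality for the $C_m$ using the Bell-number bound $|P(\alpha)| \le B_m$, and to control $B_m$ together with the products $\prod_B C_{|B|}$ by a Dobinski-type estimate — which is where the $e^{2}$, the polynomial factor $n^{4}$, and the remaining width power originate. Summing over $0\le m\le n$ and over the $d+1$ input directions then produces the one-time dimension factor $(d+1)^{2n}$ and the overall form $16^{L}(d+1)^{2n}\bigl(e^{2}n^{4}W^{3}R^{n}\|\sigma\|_{C^n}\bigr)^{nL}$.

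The main obstacle is precisely this final bookkeeping: verifying that the recursively defined constants $C_m$ can be bounded so that the per-layer factor is \emph{exactly} $16\,(e^{2}n^{4}W^{3}R^{n}\|\sigma\|_{C^n})^{n}$, and that the dimension dependence enters only once as $(d+1)^{2n}$ rather than compounding across the $L$ layers. Forcing the partition count, the block-product maximum, and the conversion of $\max_{|\alpha|=m}$ over the $d+1$ variables into explicit powers to assemble into these clean closed forms is the delicate part; by contrast, the analytic content — the affine-map simplification and the order-preserving collapse $\sum_B|B|=m$ — is comparatively routine.
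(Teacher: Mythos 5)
You should first note that the paper contains no proof of Lemma \ref{1KO} at all: it is imported verbatim from \cite{DE}, where it is proved by exactly the strategy you outline — a layer-wise induction combining the exactness of the affine step ($\partial^\alpha \psi_j = W_j\,\partial^\alpha F_{j-1}$, no Fa\`a di Bruno across $A_j^\theta$) with a composition estimate across the scalar activation. So your skeleton is the right one, and your observation that $\sum_{B\in\pi}|B| = m$ makes the depth-dependence collapse to $K^{m(j-1)}\cdot K^m$ is genuinely the heart of why the bound is singly exponential in $L$.

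There are, however, two concrete problems. The first is an internal inconsistency that, as written, prevents your scheme from reaching the stated constant. Your displayed recursion uses the uniform overcount $(WR)^n$, but the correct partition-resolved factor is $(WR)^{|\pi|}$ with $|\pi|\le m$: each block contributes one derivative of $\psi_j$, hence one factor $WR$. With your uniform $(WR)^n$, the closure condition at $m=1$ reads $C_1 K \ge \lVert\sigma\rVert_{C^n}(WR)^n C_1$, forcing $K \ge (WR)^n\lVert\sigma\rVert_{C^n}$ — contradicting your stated choice $K\sim WR\lVert\sigma\rVert_{C^n}$ — and the resulting top-order growth $K^{nL}\gtrsim W^{n^2L}$ overshoots the lemma's $W^{3nL}$ as soon as $n\ge 4$. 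The fix is easy but must be made: keeping $(WR)^{|\pi|}\le (WR)^m$, the choice $K = n\,WR\,\lVert\sigma\rVert_{C^n}$ with $C_m\equiv 1$ closes the induction outright, since $\lVert\sigma\rVert_{C^n}\,B_m\,(WR)^m \le K^m\,\lVert\sigma\rVert_{C^n}^{\,1-m}\le K^m$ using $B_m\le m^m\le n^m$, and this lands comfortably inside $\bigl(e^2 n^4 W^3 R^n\lVert\sigma\rVert_{C^n}\bigr)^{nL}$. The second problem is structural: your final paragraph defers precisely the verification of the explicit constants $16^L$, $(d+1)^{2n}$, $e^2 n^4 W^3 R^n$ — but the lemma has no qualitative content (exponential-in-$L$ growth with a polynomial base is trivial); the explicit constant \emph{is} the statement, so a proof that stops at ``the bookkeeping is the delicate part'' has not proved it. Two smaller points you should also patch: for $m=0$ your base case fails on $\mathbb{R}^{d+1}$ (the sup of the identity is infinite), which is harmless only because every block in a partition has order $\ge 1$ so $D_0^{(0)}$ never enters the recursion, and $D_0^{(j)}\le\lVert\sigma\rVert_{C^0}$ for all hidden layers; and the conversion from $\max_{|\alpha|\le n}$ to the $C^n$ norm should be where the one-time factor $(d+1)^{2n}$ is spent, which deserves an explicit count of multi-indices rather than a gesture.
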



\begin{thebibliography}{100}
\addtolength{\leftmargin}{0.2in}
\setlength{\itemindent}{-0.2in}

\bibitem{1} Ambrose, D.M., and Mazzucato, A.L. (2021). Global solutions of the two-dimensional Kuramoto-Sivashinsky equation with a linearly growing mode in each direction. \textit{J. Nonlinear Sci.} \textbf{31}, 96.

\bibitem{BA} Bahouri, H., Danchin, R., and Chemin, J.-Y. (2011). \textit{Fourier Analysis and Nonlinear Partial Differential Equations.} Springer Series of Comprehensive Studies in Mathematics, Springer-Verlag, Berlin Heidelberg. doi:10.1007/978-3-642-16830-7.

\bibitem{4} Benachour, S., Kukavica, I., Rusin, W., and Ziane, M. (2014). Anisotropic estimates for the two-dimensional Kuramoto-Sivashinsky equation. \textit{J. Dyn. Differ. Equ.} \textbf{26}, 461–476.

\bibitem{CH} Cheskidov, A., and Shvydkoy, R. (2010). The regularity of weak solutions of the 3D Navier-Stokes equations in $B^{-1}_{\infty,\infty}$. \textit{Arch. Ration. Mech. Anal.} \textbf{195}(1), 159–169.

\bibitem{13} Coti-Zelati, M., Dolce, M., Feng, Y., and Mazzucato, A.L. (2021). Global existence for the two-dimensional Kuramoto-Sivashinsky equation with a shear flow. \textit{J. Evol. Equ.} \textbf{21}, 5079–5099.

\bibitem{14p} Cuomo, S., Di Cola, V.S., Giampaolo, F., Rozza, G., Raissi, M., and Piccialli, F. (2022). Scientific machine learning through physics-informed neural networks: where we are and what’s next. \textit{arXiv preprint} arXiv:2201.05624.

\bibitem{DE} De Ryck, T., Jagtap, A.D., and Mishra, S. (2023). Error estimates for physics-informed neural networks approximating the Navier-Stokes equations. \textit{IMA J. Numer. Anal.}

\bibitem{DR} De Ryck, T., Lanthaler, S., and Mishra, S. (2021). On the approximation of functions by tanh neural networks. \textit{Neural Networks} \textbf{143}, 732–750.

\bibitem{DE1} De Ryck, T., and Mishra, S. (2021). Error analysis for physics-informed neural networks (PINNs) approximating Kolmogorov PDEs. \textit{arXiv preprint} arXiv:2106.14473.

\bibitem{15a} Dissanayake, M.W.M.G., and Phan-Thien, N. (1994). Neural-network-based approximations for solving partial differential equations. \textit{Commun. Numer. Methods Eng.} \textbf{10}(3), 195–201.

\bibitem{FA} Fang, D., and Qian, C. (2014). Regularity criterion for 3D Navier-Stokes equations in Besov spaces. \textit{Commun. Pure Appl. Anal.} \textbf{13}, 585–603.

\bibitem{17} Feng, Y., and Mazzucato, A.L. (2021). Global existence for the two-dimensional Kuramoto-Sivashinsky equation with advection. \textit{Commun. PDE} \textbf{47}, 279–306.

\bibitem{HIp} Hiptmair, R., and Schwab, C. (2008). \textit{Numerical Methods for Elliptic and Parabolic Boundary Value Problems.} ETH Z\"urich.

\bibitem{KA21} Karniadakis, G.E., Kevrekidis, I.G., Lu, L., Perdikaris, P., Wang, S., and Yang, L. (2021). Physics-informed machine learning. \textit{Nat. Rev. Phys.} \textbf{3}(6), 422–440.

% \bibitem{KL57} Kiselev, A., and Ladyzhenskaya, O. (1957). On the existence and uniqueness of the solution of the nonstationary problem for a viscous, incompressible fluid. \textit{Izv. Akad. Nauk SSSR. Ser. Mat.} \textbf{21}, 655–680.

\bibitem{33} Kukavica, I., and Massatt, D. (2023). On the global existence for the Kuramoto-Sivashinsky equation. \textit{J. Dyn. Differ. Equ.} \textbf{35}, 69–85.

\bibitem{34} Kuramoto, Y., and Tsuzuki, T. (1975). On the formation of dissipative structures in reaction–diffusion systems. \textit{Prog. Theor. Phys.} \textbf{54}, 687–699.

\bibitem{35} Kuramoto, Y., and Tsuzuki, T. (1976). Persistent propagation of concentration waves in dissipative media far from equilibrium. \textit{Prog. Theor. Phys.} \textbf{55}, 365–369.

\bibitem{37} Larios, A., Rahman, M.M., and Yamazaki, K. (2022). Regularity criteria for the Kuramoto-Sivashinsky equation in dimensions two and three. \textit{J. Nonlinear Sci.} \textbf{32}, 1–33.

\bibitem{38} Larios, A., and Titi, E.S. (2016). Global regularity versus finite-time singularities: some paradigms on the effect of boundary conditions and certain perturbations. In: \textit{Recent Progress in the Theory of the Euler and Navier-Stokes Equations}, vol. 430, Cambridge University Press, pp. 96–125.

\bibitem{39} Larios, A., and Yamazaki, K. (2020). On the well-posedness of an anisotropically reduced two-dimensional Kuramoto-Sivashinsky equation. \textit{Physica D} \textbf{411}, 132560.

\bibitem{40l} Lagaris, I.E., Likas, A., and Fotiadis, D.I. (1998). Artificial neural networks for solving ordinary and partial differential equations. \textit{IEEE Trans. Neural Netw.} \textbf{9}(5), 987–1000.

\bibitem{41p} Lagaris, I.E., Likas, A.C., and Papageorgiou, D.G. (2000). Neural-network methods for boundary value problems with irregular boundaries. \textit{IEEE Trans. Neural Netw.} \textbf{11}(5), 1041–1049.

\bibitem{MB} Majda, A., and Bertozzi, A. (2002). \textit{Vorticity and Incompressible Flow.} Cambridge Texts in Applied Mathematics, vol. 27. Cambridge University Press, Cambridge.

\bibitem{41} Massatt, D. (2022). On the well-posedness of the anisotropically reduced two-dimensional Kuramoto-Sivashinsky equation. \textit{Discrete Contin. Dyn. Syst. B} \textbf{27}, 6023.

\bibitem{42} Michelson, D.M., and Sivashinsky, G.I. (1977). Nonlinear analysis of hydrodynamic instability in laminar flames–II. Numerical experiments. \textit{Acta Astronaut.} \textbf{4}, 1207–1221.

\bibitem{MS} Mishra, S., and Molinaro, R. (2022). Estimates on the generalization error of physics-informed neural networks for approximating a class of inverse problems for PDEs. \textit{IMA J. Numer. Anal.} \textbf{42}(2), 981–1022.

\bibitem{MS1} Mishra, S., and Molinaro, R. (2022). Estimates on the generalization error of physics-informed neural networks for approximating PDEs. \textit{IMA J. Numer. Anal.} \textbf{43}(1), 1–43.

\bibitem{46} Nicolaenko, B., and Scheurer, B. (1984). Remarks on the Kuramoto-Sivashinsky equation. \textit{Physica D} \textbf{12}, 391–395.

\bibitem{50} Pokhozhaev, S.I. (2008). On the blow-up of solutions of the Kuramoto-Sivashinsky equation. \textit{Math. Sb.} \textbf{199}, 97–106.

\bibitem{RAM} Raissi, M., and Karniadakis, G.E. (2018). Hidden physics models: machine learning of nonlinear partial differential equations. \textit{J. Comput. Phys.} \textbf{357}, 125–141.

\bibitem{RA} Raissi, M., Perdikaris, P., and Karniadakis, G.E. (2019). Physics-informed neural networks: a deep learning framework for solving forward and inverse problems involving nonlinear partial differential equations. \textit{J. Comput. Phys.} \textbf{378}, 686–707.

\bibitem{54} Sell, G.R., and Taboada, M. (1992). Local dissipativity and attractors for the Kuramoto-Sivashinsky equation in thin 2D domains. \textit{Nonlinear Anal.} \textbf{18}, 671–687.

\bibitem{55} Sivashinsky, G.I. (1977). Nonlinear analysis of hydrodynamic instability in laminar flames. I. Derivation of basic equations. \textit{Acta Astronaut.} \textbf{4}, 1177–1206.

\bibitem{56} Sivashinsky, G.I. (1980). On flame propagation under conditions of stoichiometry. \textit{SIAM J. Appl. Math.} \textbf{39}, 67–82.

\bibitem{59} Tadmor, E. (1986). The well-posedness of the Kuramoto-Sivashinsky equation. \textit{SIAM J. Math. Anal.} \textbf{17}, 884–893.

\bibitem{61} Temam, R. (2001). \textit{Navier-Stokes Equations: Theory and Numerical Analysis.} AMS Chelsea Publishing, reprint of the 1984 edition.

\bibitem{THS} Tsurumi, H. (2019). Well-posedness and ill-posedness of the stationary Navier-Stokes equations in toroidal Besov spaces. \textit{Nonlinearity} \textbf{32}(10).

\bibitem{JU} Wu, J. (2004). The generalized incompressible Navier-Stokes equations in Besov spaces. \textit{Dyn. Partial Differ. Eq.} \textbf{1}, 381–400.

\end{thebibliography}
\end{document}